\newcommand{\doi}[1]{\url{https://doi.org/#1}}
\newcommand{%
	
	\import{./figures/}{.pdf_tex}
}[1]{%
	
	\import{./figures/}{#1.pdf_tex}
}
\let\OLDthebibliography\thebibliography
\renewcommand\thebibliography[1]{
	\OLDthebibliography{#1}
	\setlength{\parskip}{0pt}
}
\newtheorem{theorem}{Theorem}[section]
\newtheorem{corollary}[theorem]{Corollary}
\newtheorem{proposition}[theorem]{Proposition}
\newtheorem{lemma}[theorem]{Lemma}
\theoremstyle{definition}
\newtheorem{example}[theorem]{Example}
\newtheorem{remark}[theorem]{Remark}
\DeclareMathOperator{\Id}{Id}
\DeclareMathOperator{\End}{End}
\DeclareMathOperator{\ad}{ad}
\DeclareMathOperator{\Der}{Der}
\DeclareMathOperator{\rank}{rank}
\DeclareMathOperator{\Ric}{Ric}
\DeclareMathOperator{\ric}{ric}
\newcommand{\N}{\mathbb{N}}
\newcommand{\R}{\mathbb{R}}
\newcommand{\C}{\mathbb{C}}
\newcommand{\frg}{\mathfrak{g}}
\newcommand{\frh}{\mathfrak{h}}
\renewcommand{\Re}{\operatorname{Re}}
\renewcommand{\Im}{\operatorname{Im}}
\newcommand{\tran}[1]{\hspace{.2mm}\prescript{t\hspace{-.5mm}}{}{#1}}
\renewcommand{\d}{\mathrm{d}}
\DeclarePairedDelimiter{\abs}{\lvert}{\rvert}
\newcommand{\Span}[1]{\operatorname{Span}\{#1\}}
\newcommand{\SpanC}[1]{\operatorname{Span}_\C\{#1\}}
\newcommand{\lie}[1]{\mathfrak{#1}}
\DeclareMathOperator{\Tr}{tr}
\DeclareMathOperator{\im}{Im}
\newcommand{\g}{\lie g}
\newcommand{\h}{\lie h}
\newcommand{\gl}{\lie{gl}}
\newenvironment{smallpmatrix}{\left(\begin{smallmatrix}}{\end{smallmatrix}\right)}
\setlist[enumerate]{noitemsep}
\setlist[itemize]{leftmargin=*,noitemsep}
\newcommand{\GGL}{{\overline{\mathcal{GL}}}}
\title{\textbf{Almost abelian pseudo-Kähler Lie algebras}}
\author{Diego Conti \orcidlink{0000-0001-6812-4411} and Alejandro Gil-García \orcidlink{0000-0002-9370-241X}}
\date{\today}
\begin{document}

\maketitle

\begin{abstract}

We study invariant pseudo-Kähler structures on a solvmanifold $G$ such that the Lie algebra $\mathfrak{g}$ is almost abelian, that is $\mathfrak{g}=\mathfrak{h}\rtimes\mathbb{R}$, with $\mathfrak{h}$ abelian; comparing with the positive-definite case, an additional situation occurs, corresponding to the ideal $\mathfrak{h}$ being degenerate. We obtain a classification up to unitary isomorphism in all dimensions. We deduce that every nilpotent almost abelian Lie algebra endowed with a complex structure also admits a compatible pseudo-Kähler structure, and prove that this is no longer true for general almost abelian Lie algebras; indeed, we classify all the almost abelian Lie algebras that admit a complex structure and a symplectic structure but no compatible pseudo-Kähler metric. We study the curvature of the metrics we have obtained, and use some of them to construct Einstein pseudo-Kähler metrics in two dimensions higher.\bigskip

\emph{Keywords: almost abelian, solvmanifold, pseudo-Kähler, Kähler-Einstein}\medskip

\emph{MSC2020: Primary 53C15; Secondary 53C50, 53C55, 22E25}

\end{abstract}

\clearpage

\tableofcontents


\section*{Introduction}

A recurrent theme in Riemannian geometry is the quest for metrics satisfying both curvature conditions and compatibility with an additional geometric structure, the prototype being Kähler-Einstein geometry. Among the possible holonomy groups of an irreducible non-symmetric metric, barring the case of the full special orthogonal group, $\mathrm{U}(n)$ is the group that puts less constraints on the metric; however, Riemannian manifolds with holonomy contained in $\mathrm{U}(n)$, i.e.\ Kähler, enjoy several properties which make their study more amenable. The indefinite picture is more intricate, due to the existence of irreducible holonomy groups that act decomposably; nonetheless, the holonomy group $\mathrm{U}(p,q)$ shares some properties with its definite counterpart. For instance, in \cite{Bolsinov_Rosemann_2021}, a local characterization of Bochner-flat manifolds was derived, which holds in both the Kähler and pseudo-Kähler setting; in \cite{Derdzinski_Terek_2024}, a generalization of the classical result that Killing vector fields on a compact Kähler manifold are holomorphic was obtained, and a relation with Betti numbers was found in \cite{Yamada_2019}. Extending to the indefinite setting the classification results of Kähler geometry and understanding whether the properties of Kähler manifolds generalize to arbitrary signature has also been a subject of enquiry (see e.g.\ \cite{Petean97,LU21}). Regardless of the extent to which Kähler and pseudo-Kähler metrics can be likened, it is a natural question, in the presence of any construction of ``special'' pseudo-Riemannian metrics, whether it is possible to choose the metric to be pseudo-Kähler.\medskip

Since pseudo-Kähler metrics are defined by a symplectic form and a complex structure, one can arrive to the same problem from a different perspective: given a class of complex or symplectic manifolds, for which of them is it possible to choose a complex and a symplectic structure in a compatible way, so that a metric is defined? From this point of view, there is no particular reason to assume that the metric be positive-definite.\medskip

The pseudo-Kähler structures that we consider in this paper are homogeneous; more precisely, left-invariant on Lie groups. The positive-definite case is well understood: the structure of homogeneous Kähler manifolds was determined in \cite{DN88}; for left-invariant metrics, Lie algebras supporting a Kähler metric are characterized in \cite{Lic88,DM96}. Notably, the nilpotent case is completely trivial: it was shown in \cite{Hasegawa} that the only Kähler nilmanifolds are tori. Kähler-Einstein homogeneous manifolds are also quite scarce; they fall into three classes: flag manifolds in positive curvature (see \cite{Matsushima_1972}), flat manifolds classified by \cite{Milnor_1976}, and standard Einstein solvmanifolds (see \cite{Lauret_2010,Bohm_Lafuente_2023}), whose structure was described in \cite{Heber_1998}.\medskip

The indefinite case is much less restrictive. Compact homogeneous pseudo-Kähler manifolds are classified in \cite{DG92} (with a generalization to non-invariant metrics in \cite{Guan10}), but little seems to be known about pseudo-Kähler Lie algebras in general. Classifications do exist, but only in low dimensions and with assumptions (\cite{Ova06,CFU04,LU25,CG25}). Restricted classes have also been studied: those with an abelian complex structure (\cite{BS12}), special Kähler, relevant in the construction of quaternion-Kähler metrics (\cite{Valencia,Man21,Ac00}), hypersymplectic (\cite{AD06,And06,BGL21}) and Born (\cite{BornGeometry25,BornLieAlgebras25}). Examples of Einstein pseudo-Kähler solvmanifolds have been constructed in \cite{Yamada12,Conti_Rossi_SegnanDalmasso_2023,Rossi:NewSpecialFantastic}.\medskip

The lack of general results on pseudo-Kähler Lie algebras may be attributed to the fact that general Lie algebras are too flexible; for instance, nilpotent or solvable Lie algebras are only classified in low dimensions. General results are easier to obtain for the restricted class of almost abelian Lie algebras, which are completely determined by the conjugacy class of an endomorphism. Some very general results for structures on almost abelian Lie algebras have been proved in \cite{Freibert24_TFHS}, and various non-Kähler geometries related to complex geometry have been considered in the literature (\cite{Paradiso21,BFLT23,FP21,FP23}). For our purposes, the key result in this setting is the classification of the almost abelian Lie algebras admitting complex or symplectic structures in \cite{ArroyoBarberisDiazGodoyHernandez}.\medskip

In this paper, we study almost abelian pseudo-Kähler Lie algebras. A Lie algebra $\frg$ is \emph{almost abelian} if it admits a codimension one abelian ideal $\frh$, thus $\frg$ can be identified with $\frh\rtimes_D\R$, where $D\in\End(\frh)$. Throughout the paper we will need to distinguish two different situations, depending whether the abelian ideal $\frh$ is degenerate or not with respect to the pseudo-Kähler metric (equivalently, if the one-dimensional factor $\R$ is isotropic or not). We refer to the former as the \emph{isotropic case} and to the latter as the \emph{non-isotropic case}. We describe these two situations in Section \ref{section:characterization}, where we give characterizations of the endomorphism $D$ for the non-isotropic case in Proposition~\ref{prop:characterization_non-isotropic_case} and for the isotropic case in Proposition~\ref{prop:characterization_isotropic_case}, in accord with \cite{Freibert24_TFHS}. These characterizations allow us to conclude that the first Betti number of a nilpotent almost abelian pseudo-Kähler Lie algebra is always greater than two (see Proposition \ref{prop:b1>2_in_PK}). In Section \ref{section:curvature} we determine the curvature of an almost abelian pseudo-Kähler Lie algebra. We show that the pseudo-Kähler metric is either an algebraic Ricci soliton (see Corollary \ref{cor:Ricci_soliton}) or Ricci-flat (see Proposition \ref{prop:curvature_isotropic_h0}), and flat if the underlying Lie algebra is unimodular (see Corollary \ref{cor:flat_almost_ab_pK}).\medskip

Building on the classification of adjoint orbits of $\mathrm{U}(p,q)$ obtained in \cite{Burgoyne_Cushman_1977} (see Theorem \ref{thm:orbits_U(p,q)}), we give a complete classification of almost abelian pseudo-Kähler Lie algebras in Section \ref{section:classification}. To this end, we introduce the language of \emph{blocks} and the abelian semigroup $\mathcal{U}$ freely generated by them. A block $\Delta$ consists of a complex vector space $V$, a Hermitian form $h$ on $V$, and a skew-Hermitian endomorphism $A\in\End(V)$ relative to $h$. We introduce seven families of Lie algebras $\frg_0,\ldots,\frg_6$ depending on types $t\in\mathcal{U}$ and each of them equipped with a fixed pseudo-Kähler structure. We then show in Theorem \ref{thm:classification} that every almost abelian pseudo-Kähler Lie algebra $\frg$ belongs to one of these families. We carry out our classification in arbitrary dimension, then specialize to dimensions six and eight (Subsections \ref{section:6d_classification} and \ref{section:8d_classification}) in more explicit terms; we believe that this explicit description will find applications beyond the scope of this paper.\medskip

After giving this classification up to isomorphisms preserving metric and complex structure, we consider the related question of which isomorphism classes of almost abelian Lie algebras admit a pseudo-Kähler structure. We give a complete answer in Theorem \ref{thm:jordantypes}. In the special case of nilpotent almost abelian Lie algebras, it follows from \cite{ArroyoBarberisDiazGodoyHernandez} that the existence of a complex structure implies the existence of a symplectic one; moreover, the complex structure is unique by \cite{AABRW25}. Comparing with our classification, we show in Corollary \ref{cor:complex_PK} that every nilpotent almost abelian Lie algebra with a complex structure admits a compatible pseudo-Kähler metric. Outside the nilpotent case, we prove that there exist almost abelian Lie algebras that admit both a complex and a symplectic structure, but not a pseudo-Kähler structure, and classify them in Theorem \ref{thm:notpk}.\medskip

Finally, we show that some of the Ricci-flat pseudo-Kähler metrics obtained with our method can be used to produce pseudo-Kähler-Einstein Lie algebras in two dimensions higher with the construction of \cite{Conti_Rossi_SegnanDalmasso_2023}. In Section \ref{section:PKE_extensions}, we write down the conditions that a nilpotent almost abelian pseudo-Kähler Lie algebra must satisfy in order for the construction to apply (see Propositions \ref{prop:ESnil_non-isotropic} and \ref{prop:ESnil}). We conclude with an explicit classification of the eight-dimensional pseudo-Kähler-Einstein Lie algebras that can be obtained in this way (Propositions \ref{prop:KEextension6d_non_isotropic} and \ref{prop:KEextension6d}).

\subsection*{Acknowledgements}

The authors are grateful to Marco Freibert and Federico A.\ Rossi for useful comments and to the referees for their valuable comments. D.\ C.\ would like to acknowledge the project PRIN 2022MWPMAB ``Interactions between Geometric Structures and Function Theories'', GNSAGA of INdAM and  the MIUR Excellence Department Project awarded to the Department of Mathematics, University of Pisa, CUP I57G22000700001. The work of A.\ G.\ is supported by the Scuola Internazionale Superiore di Studi Avanzati (SISSA) and was previously funded by the Beijing Institute of Mathematical Sciences and Applications (BIMSA).


\section{Characterization of almost abelian pseudo-Kähler Lie algebras}\label{section:characterization}

Let $\g$ be an \emph{almost abelian Lie algebra}, namely admitting a codimension one abelian ideal $\frh$; suppose $\g$ has real dimension $2n$. In particular, the derived subalgebra $[\frg,\frg]$ is abelian, and $\frg$ is two-step solvable. We can choose a basis $\{e_1,\ldots,e_{2n}\}$ of $\frg$ such that \begin{equation}\label{eqn:amostabelianbasis}
    \frh=\Span{e_1,\ldots,e_{2n-1}},\quad\ad(e_{2n})\lie h\subset\lie h.
\end{equation}

The whole Lie algebra structure of $\frg$ is determined by the derivation $$D:=\ad(e_{2n})|_{\frh}\in\Der(\frh)=\End(\R^{2n-1}),$$ allowing one to identify $\frg$ with the semidirect product $\frh\rtimes_D\R$.\medskip

An \emph{almost pseudo-Hermitian structure} on a vector space $V$ is a pair $(J,g)$, with $J\colon V\to V$ an almost complex structure and $g\colon V\times V\to\R$ a non-degenerate scalar product such that $$g(JX,JY)=g(X,Y).$$

Given vector spaces $V$, $V'$ endowed with almost pseudo-Hermitian structures $(J,g)$, $(J',g')$, we will say that a linear isomorphism $\phi\colon V\to V'$ is \emph{unitary} if $\phi\circ J=J'\circ\phi$ and $\phi^*g'=g$. We will denote by $\mathrm{U}(V,J,g)$ the Lie group of unitary transformations of $V$, and by $\lie{u}(V,J,g)$ its Lie algebra. This definition is mostly useful when $V$ is a Lie algebra, whose corresponding Lie group has an induced left-invariant almost pseudo-Hermitian structure, but we will occasionally need to consider the case where $V$ is a subspace in the Lie algebra of interest.\medskip

Let $\frg$ be an almost abelian Lie algebra equipped with an almost pseudo-Hermitian structure $(J,g)$. By the equality $\dim\frh+\dim\frh^\perp=\dim\frg$, we see that $\frh^\perp$ is one-dimensional; consider the following two-dimensional space: $$\lie h_0:=\lie h^\perp\oplus J\lie h^\perp.$$

Since $\frh_0$ is $J$-invariant, the metric restricted to $\lie h_0$ is either definite or zero. In the first case we will say that $\frh_0$ is \emph{non-isotropic}, while in the second case we will say that $\frh_0$ is \emph{isotropic}.\medskip

Our goal is to characterize the endomorphism $D\in\End(\R^{2n-1})$ so that the almost pseudo-Hermitian structure $(J,g)$ becomes pseudo-Kähler. For this we need to distinguish two cases according to whether $\lie{h}_0$ is isotropic or not.\medskip

In the non-isotropic case, that is when the metric $g$ restricted to $\frh_0$ is definite, we have a $J$-invariant and orthogonal decomposition $$\g=\h_1\oplus \h_0.$$

Therefore, we may choose a basis $\{e_1,\dotsc, e_{2n}\}$ of $\frg$, with dual basis $\{e^1,\dotsc, e^{2n}\}$, satisfying \eqref{eqn:amostabelianbasis} and in addition \begin{equation}\label{eq:non-isotropic_pseudo-Hermitian}
    Je_{2i-1}=e_{2i}, \quad g=\sum_{i=1}^{2n}\varepsilon_i e^i\otimes e^i, \quad \lie h_1=\Span{e_1,\dotsc, e_{2n-2}}, \quad \lie h_0=\Span{e_{2n-1},e_{2n}},
\end{equation} where $\varepsilon_i:=g(e_i,e_i)\in\{-1,+1\}$. Thus, $J\frh^\perp=\Span{e_{2n-1}}$ and $\frh^\perp=\Span{e_{2n}}$. Denote $J_1:=J|_{\frh_1}$ and $g_1:=g|_{\frh_1\times\frh_1}$.

\begin{proposition}[\protect{\cite{Freibert24_TFHS}}]\label{prop:characterization_non-isotropic_case}
    Let $\frg=\frh\rtimes_D\R$ be an almost abelian Lie algebra equipped with an almost pseudo-Hermitian structure $(J,g)$ such that $\lie h_0$ is non-isotropic. Then $(J,g)$ defines a pseudo-Kähler structure on $\frg$ if and only if, with respect to the basis $\{e_1,\dotsc, e_{2n}\}$ satisfying \eqref{eqn:amostabelianbasis} and \eqref{eq:non-isotropic_pseudo-Hermitian},
    \begin{equation}\label{eq:D_positive_g0}
        D=\begin{pmatrix}
            A&0\\
            0&a
        \end{pmatrix}
    \end{equation} with $A\in\mathfrak{u}(\frh_1,J_1,g_1)$ and $a\in\R$.
\end{proposition}

\begin{proof}
    This is proved in \cite[Example 2.45]{Freibert24_TFHS}, but also follows from the same argument given in the positive-definite Kähler case in \cite{FP21}, which we reproduce here for completeness. With respect to the basis $\{e_1,\dotsc, e_{2n}\}$ of $\frg$, the derivation $D\in\End(\R^{2n-1})$ takes the form $$D=\begin{pmatrix}
        A&v\\
        w^t&a
    \end{pmatrix},$$ where $A\in\mathfrak{gl}(\frh_1)=\End(\R^{2n-2})$, $v,w\in\frh_1\cong\R^{2n-2}$ and $a\in\R$. The complex structure $J$ is integrable if and only if $D$ commutes with $J_1$, which amounts to $[A,J_1]=0$, and $\frh_1$ is invariant under the action of $D$, i.e.\ $w=0$ (see \cite{LRv15}). Arguing as in \cite{FP21}, we conclude that the pseudo-Hermitian pair $(J,g)$ is pseudo-Kähler if and only if, in addition, $v=0$ and $A$ is skew-symmetric with respect to $g_1$. Thus $D$ is as in the statement.
\end{proof}

In the isotropic case, that is when the metric $g$ restricted to $\frh_0$ is zero, the metric on $\lie h$ is degenerate. This case can too be deduced from the general result of \cite[Theorem 2.43]{Freibert24_TFHS}, but we will follow a more direct approach.\medskip

Fix a generator $e_1$ of $\lie h^\perp$; then $e_1$ belongs to $\lie h$. We will complete $e_1$ to a basis adapted to the filtration $$\lie h^\perp\subset \lie h_0\subset \lie h_0^\perp\subset\frh\subset\lie g.$$

Choose a $J$-invariant complement $V$ of $\lie h_0$ in $\lie h_0^\perp$. Since $\lie h_0$ is precisely the null space of $\lie h_0^\perp$, the complement $V$ is non-degenerate, and has an almost pseudo-Hermitian structure $(J_V,g_V)$ induced by restriction. Choose a basis $\{e_3,\dotsc, e_{2n-2}\}$ of $V$ such that $$J_Ve_{2i-1}=e_{2i}, \quad g_V=\sum_{i=3}^{2n-2}\varepsilon_i e^i\otimes e^i.$$

We can also choose a vector $e_{2n}$ orthogonal to $V$ such that $$g(e_1,e_{2n})=1,\quad g(Je_1,e_{2n})=0,\quad g(e_{2n},e_{2n})=0.$$

Necessarily, $e_{2n}$ is not in $\lie h$. Defining $e_2=Je_1$ and $e_{2n-1}=-Je_{2n}$, we obtain a basis $\{e_1,\dotsc, e_{2n}\}$ of $\frg$ such that \begin{equation}\label{eqn:almosthermitianstructurezero}
    Je_{2i-1}=e_{2i}, \quad g=e^1\odot e^{2n}-e^2\odot e^{2n-1}+\sum_{i=3}^{2n-2}\varepsilon_i e^i\otimes e^i, \quad \lie h=\Span{e_1,\dotsc, e_{2n-1}},
\end{equation} where $e^i\odot e^j:=e^i\otimes e^j+e^j\otimes e^i$. In this case we consider the decomposition $$\frg=\frh^\perp\oplus J\frh^\perp\oplus V\oplus\Span{e_{2n-1}}\oplus\Span{e_{2n}},$$ where $$\frh^\perp=\Span{e_1},\quad J\frh^\perp=\Span{e_2},\quad V=\Span{e_3,\ldots,e_{2n-2}}.$$

Denote by $v^{\flat_V}$ the musical isomorphism $V\to V^*$ induced by the metric $g_V$.

\begin{proposition}\label{prop:characterization_isotropic_case}
    Let $\frg=\frh\rtimes_D\R$ be an almost abelian Lie algebra equipped with an almost pseudo-Hermitian structure $(J,g)$ such that $\lie h_0$ is isotropic. Then $(J,g)$ defines a pseudo-Kähler structure on $\frg$ if and only if, with respect to the basis $\{e_1,\dotsc, e_{2n}\}$ satisfying \eqref{eqn:amostabelianbasis} and \eqref{eqn:almosthermitianstructurezero},
    \begin{equation}\label{eq:D_zero_g0}
        D=\begin{pmatrix}
            -a&0&-(J_Vv)^{\flat_V}&c_1\\
            0&-a&v^{\flat_V}&c_2\\
            0&0&A&v\\
            0&0&0&a
        \end{pmatrix}
    \end{equation} with $A\in\mathfrak{u}(V,J_V,g_V)$, $v\in V$ and $a,c_1,c_2\in\R$.
\end{proposition}

\begin{proof}
    With respect to the decomposition $\frh=\frh^\perp\oplus J\frh^\perp\oplus V\oplus\Span{e_{2n-1}}$, the derivation $D$ takes the form $$D=\begin{pmatrix}
        h_1&h_2&w_1^t&c_1\\
        h_3&h_4&w_2^t&c_2\\
        v_1&v_2&A&v_3\\
        c_3&c_4&w_3^t&a
    \end{pmatrix},$$ where $A\in\mathfrak{gl}(V)=\End(\R^{2n-4})$, $v_1,v_2,v_3,w_1,w_2,w_3\in V\cong\R^{2n-4}$ and $h_1,h_2,h_3,h_4,c_1,c_2,c_3,c_4,a\in\R$.\medskip

    Let us first obtain the conditions on $D$ for $J$ to be integrable, that is the conditions on $D$ for when the Nijenhuis tensor $N_J$ vanishes. Denote $W:=\frh^\perp\oplus J\frh^\perp\oplus V\subset\frh$. Clearly $N_J(X,Y)=0$ for $X,Y\in W$. Let $X\in W$, then \begin{align*}
        N_J(X,e_{2n-1})&=[X,e_{2n-1}]+J[JX,e_{2n-1}]+J[X,Je_{2n-1}]-[JX,Je_{2n-1}]\\
        &=J[X,e_{2n}]-[JX,e_{2n}]=-JDX+DJX,
    \end{align*} which is zero if and only if $D$ preserves the subspace $W$, i.e.\ $c_3=c_4=0$ and $w_3=0$, and $D$ commutes with $J|_W$. Note that $$N_J(X,e_{2n})=N_J(X,Je_{2n-1})=-JN_J(X,e_{2n-1})=0,$$ and it is also clear that $N_J(e_{2n-1},e_{2n})=0$. The condition $[D,J|_W]=0$ implies that $h_4=h_1$, $h_3=-h_2$, $v_2=J_Vv_1$, $w_2^t=-w_1^tJ_V$ and $[A,J_V]=0$.\medskip
    
    We now obtain the conditions on $D$ for $(J,g)$ to be pseudo-Kähler, that is the conditions on $D$ for the fundamental two-form $\omega:=g(J\cdot,\cdot)$ to be closed. This amounts to showing that $$\d\omega(X,Y,Z)=g([X,Y],JZ)+g([Z,X],JY)+g([Y,Z],JX)=0$$ for all $X,Y,Z\in\frg$. This is clear if $X,Y,Z\in W$, where $W=\frh^\perp\oplus J\frh^\perp\oplus V\subset\frh$. Let $X,Y\in W$. Then $\d\omega(X,Y,e_{2n-1})=0$ and $$\d\omega(X,Y,e_{2n})=g([e_{2n},X],JY)+g([Y,e_{2n}],JX)=g(DX,JY)-g(JX,DY).$$

    We distinguish the following cases: \begin{itemize}
        \item If $X,Y\in\frh^\perp\oplus J\frh^\perp$, then $\d\omega(X,Y,e_{2n})=0$.
        \item If $X=e_1\in\frh^\perp$ and $Y\in V$, then $\d\omega(e_1,Y,e_{2n})=g(v_1,JY)$, which is zero for all $Y$ if and only if $v_1=0$. The same conclusion holds if $X\in J\frh^\perp$.
        \item If $X,Y\in V$, then $$\d\omega(X,Y,e_{2n})=g(AX,JY)-g(AY,JX)=g((A+A^*)X,JY),$$ where $A^*$ denotes the adjoint of $A$ with respect to $g_V$, and we have used $g(J\cdot,\cdot)=-g(\cdot,J\cdot)$ and $[A,J_V]=0$. Thus, we obtain $A\in\mathfrak{u}(V,J_V,g_V)$.
    \end{itemize}

    Now let $X\in W$, and write \begin{align*}
        \d\omega(X,e_{2n-1},e_{2n})&=g([e_{2n},X],Je_{2n-1})+g([e_{2n-1},e_{2n}],JX)=g(DX,e_{2n})-g(De_{2n-1},JX)\\
        &=g(DX,e_{2n})-g(v_3,JX)-g(ae_{2n-1},JX).
    \end{align*}

    Again, there are several cases: \begin{itemize}
        \item If $X=e_1\in\frh^\perp$, then $\d\omega(e_1,e_{2n-1},e_{2n})=h_1+a$ is zero if and only if $h_1=-a$.
        \item If $X=e_2\in J\frh^\perp$, then $\d\omega(e_2,e_{2n-1},e_{2n})=h_2$ is zero if and only if $h_2=0$.
        \item If $X\in V$, then $\d\omega(X,e_{2n-1},e_{2n})=w_1^tX-g(v_3,JX)=w_1^tX+g(Jv_3,X)$ is zero for all $X$ if and only if $w_1^t=(-J_Vv_3)^{\flat_V}$. Notice that $(-J_Vv_3)^{\flat_V}J_v=-v_3^{\flat_V}$.
    \end{itemize}

    Combining all these conditions and setting $v:=v_3$ we obtain $D$ as in the statement.
\end{proof}

\begin{remark}\label{remark:freibertisotropic}
    Proposition~\ref{prop:characterization_isotropic_case} is consistent with \cite[Theorem 2.43]{Freibert24_TFHS}; the fact that the analogous of the component $v$ is missing in \cite[Example 2.45]{Freibert24_TFHS} appears to be a mistake. Indeed, in the notation of \cite{Freibert24_TFHS}, the Lie algebra $\lie k_{\lie u(p,q)}$ is the Lie algebra of the Lie group whose generic element is given in \eqref{eqn:stabh}.
\end{remark}

\begin{example}\label{example:4d}
    Four-dimensional pseudo-Kähler Lie algebras have been classified by Ovando in \cite{Ova06}. In her notation, the almost abelian ones correspond to $\mathfrak{rh}_3$, $\mathfrak{rr}_{3,0}$, $\mathfrak{rr}'_{3,0}$, $\mathfrak{r}_{4,-1,-1}$ and $\mathfrak{r}'_{4,0,\delta}$ for $\delta>0$. We recover them as follows. Let us first apply the ``non-isotropic'' construction of Proposition~\ref{prop:characterization_non-isotropic_case}. Assuming $\frg$ to be four-dimensional, the subspace $\frh_1$ is two-dimensional; we equip it with a positive-definite Kähler structure $(J_1,g_1)$. Then $A\in\mathfrak{u}(\frh_1,J_1,g_1)\cong\mathfrak{u}(1)$ and the derivation \eqref{eq:D_positive_g0} takes the form $$D=\begin{pmatrix}
        0&\lambda&0\\
        -\lambda&0&0\\
        0&0&a
    \end{pmatrix},$$ where $\lambda,a\in\R$. We get the following: \begin{itemize}
        \item If $\lambda=0$ and $a\neq0$, then $\frg\cong\mathfrak{rr}_{3,0}$.
        \item If $\lambda\neq0$ and $a=0$, then $\frg\cong\mathfrak{rr}'_{3,0}$.
        \item If $\lambda,a\neq0$, then $\frg\cong\mathfrak{r}'_{4,0,\delta}$.
    \end{itemize}

    We now apply the ``isotropic'' construction of Proposition~\ref{prop:characterization_isotropic_case}. To obtain $\frg$ of dimension four, the subspace $V$ is zero-dimensional, hence the derivation \eqref{eq:D_zero_g0} takes the form $$D=\begin{pmatrix}
        -a&0&c_1\\
        0&-a&c_2\\
        0&0&a
    \end{pmatrix},$$ where $c_1,c_2,a\in\R$. We get the following: \begin{itemize}
        \item If $a\neq0$, then $\frg\cong\mathfrak{r}_{4,-1,-1}$ for every value of $c_1,c_2$.
        \item If $a=0$, then $\frg\cong\mathfrak{rh}_3$ for every value of $c_1,c_2$ satisfying $c_1^2+c_2^2\neq0$.
    \end{itemize}
\end{example}

\begin{example}\label{example:6d_nilpotent}
    Six-dimensional nilpotent Lie algebras admitting a pseudo-Kähler structure have been classified in \cite{CFU04}. Only three out of the thirteen are almost abelian, and they appear in the classification as $\frh_6$, $\frh_8$ and $\frh_{10}$. We recover them as follows. Let us first consider the construction with $\frh_0$ non-isotropic. In order to get a nilpotent Lie algebra, we need the derivation $D$ as in \eqref{eq:D_positive_g0} to be nilpotent, which is equivalent to $A\in\mathfrak{u}(\frh_1,J_1,g_1)$ being nilpotent and $a=0$. If $g_1$ is a definite metric, then $A=0$, thus we need to take $g_1$ of neutral signature, say $g_1=e^1\otimes e^1+e^2\otimes e^2-e^3\otimes e^3-e^4\otimes e^4$. Take $A\in\mathfrak{u}(\frh_1,J_1,g_1)\cong\mathfrak{u}(1,1)$ as $$A=\begin{smallpmatrix}
        0 & 1 & 1 & 0 \\
        -1 & 0 & 0 & 1 \\
        1 & 0 & 0 & -1 \\
        0 & 1 & 1 & 0
    \end{smallpmatrix}.$$

    Then $\frg=\R^5\rtimes_D\R$ is a six-dimensional two-step nilpotent pseudo-Kähler Lie algebra isomorphic to $\frh_6$.\medskip
    
    Consider now the construction with $\frh_0$ isotropic. Since $\frg$ is six-dimensional, the subspace $V$ is two-dimensional and we equip it with a positive-definite Kähler structure $(J_V,g_V)$. In order to get a nilpotent Lie algebra, we need the derivation $D$ as in \eqref{eq:D_zero_g0} to be nilpotent, which is equivalent to $A\in\mathfrak{u}(V,J_V,g_V)$ being nilpotent and $a=0$. Since $g_V$ is positive-definite, we have $A=0$. We get the following: \begin{itemize}
        \item If $v\neq0$, then $\frg=\R^5\rtimes_D\R\cong\frh_{10}$ for every value of $c_1,c_2$.
        \item If $v=0$, then $\frg=\R^5\rtimes_D\R\cong\frh_8$ for every value of $c_1,c_2$ satisfying $c_1^2+c_2^2\neq0$.
    \end{itemize}
\end{example}

We conclude this section with a result about the cohomology of nilpotent almost abelian pseudo-Kähler Lie algebras.\medskip

For any Lie algebra $\frg$, let $H^{\bullet}(\g)$ denote the Chevalley-Eilenberg cohomology, which can be identified with the cohomology of the complex of left-invariant forms on the associated Lie group. Writing $b_1(\frg):=\dim H^1(\frg)$, the relation $$b_1(\frg)=\dim\frg-\dim[\frg,\frg]$$ holds by construction. Given an almost abelian Lie algebra $\frg$, one has $b_1(\frg)\geq1$, whilst for a nilpotent Lie algebra $\frg$ one has $b_1(\frg)\geq2$. It is conceivable that the existence of an invariant complex structure may increase this lower bound. Indeed, in \cite[Theorem 5.8]{LU25} the authors show that $b_1(M)\geq3$ for every pseudo-Kähler nilmanifold $M$ with an invariant complex structure, up to complex dimension four. We show that the same restriction holds, in any dimension, for almost abelian nilmanifolds equipped with an invariant pseudo-Kähler structure.

\begin{proposition}\label{prop:b1>2_in_PK}
    Let $(\frg,J,g)$ be a nilpotent almost abelian pseudo-Kähler Lie algebra. Then $b_1(\frg)\geq3$.
\end{proposition}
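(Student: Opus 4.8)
The plan is to split into the two cases provided by the characterization theorems of this section, and in each case read off the first Betti number from the block form of the derivation $D$, using that nilpotency of $\frg$ forces $D$ to be nilpotent (hence the scalar parameters governing the diagonal entries must vanish).

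First consider the non-isotropic case, where Theorem~\ref{thm:characterization_non-isotropic_case} gives $D=\begin{pmatrix} A & 0\\ 0 & a\end{pmatrix}$ with $A\in\mathfrak{u}(\frh_1,J_1,g_1)$. Nilpotency of $\frg$ is equivalent to nilpotency of $D$, hence $a=0$ and $A$ nilpotent. Since $[\frg,\frg]=\Image D\subseteq\frh$, we have $b_1(\frg)=\dim\frg-\rank D=2n-\rank A$, so it suffices to show $\rank A\le 2n-3$, i.e.\ $A$ has nontrivial kernel of dimension at least $2$. A nilpotent element $A$ of $\mathfrak{u}(p,q)$ (commuting with $J_1$, hence $\C$-linear) is, complex-linearly, a nilpotent skew-Hermitian endomorphism of a complex vector space of complex dimension $n-1$; such an $A$ always has a nontrivial kernel, which is a complex (hence real-even-dimensional, thus $\ge 2$-dimensional) subspace — unless $n-1=0$. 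One must still rule out $n=1$: a two-dimensional nilpotent almost abelian Lie algebra is abelian, so $b_1=2$, but then $\frg$ is not genuinely using the non-isotropic construction in a way that produces $b_1<3$ — actually $\R^2$ does have $b_1=2$, so the statement would need $n\ge 2$ here; however the abelian $\R^{2n}$ has $b_1=2n\ge 2$, and for $n=1$ it is $2$. I would therefore note that when $\frg$ is abelian $b_1=2n\ge 2$, and the claimed bound $b_1\ge 3$ needs a separate check — but abelian $\R^{2}$ is pseudo-Kähler with $b_1=2$, so in fact I expect the intended reading is that the bound is $b_1\ge 3$ precisely because the nilpotent non-abelian case forces $\rank A\ge 1$ together with $\dim\ker A\ge 2$; and for the abelian case one simply has $2n\ge 3$ unless $n=1$. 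The resolution: $\R^2$ does not admit the isotropic construction (need $\dim\ge 4$) and in the non-isotropic four-dimensional-or-higher cases the argument applies; the genuinely two-dimensional case is $\R^2$ with $b_1=2$, which I would exclude by recalling that a pseudo-Kähler structure on $\R^2$ has $b_1 = 2 < 3$, so strictly one assumes $\dim\frg\ge 4$, or notes $b_1(\R^2)=2$ is the sole exception — I will phrase the argument so that it yields $b_1\ge 3$ whenever $\dim\frg\ge 4$, and the two-dimensional case $\R^2$ is handled by inspection. Concretely: if $\frg$ is abelian then $b_1=\dim\frg$; if $\dim\frg=2$ this is $2$, but a two-dimensional Lie algebra is not nilpotent-nonabelian, so in the nilpotent non-abelian case $D\ne 0$, forcing $\dim\frg\ge 4$ and then the kernel argument gives $b_1\ge 3$.

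Next consider the isotropic case, where Theorem~\ref{thm:characterization_isotropic_case} gives
\[
D=\begin{pmatrix}
-a & 0 & -(J_Vv)^{\flat_V} & c_1\\
0 & -a & v^{\flat_V} & c_2\\
0 & 0 & A & v\\
0 & 0 & 0 & a
\end{pmatrix},
\]
with $A\in\mathfrak{u}(V,J_V,g_V)$. Nilpotency of $D$ forces $a=0$ and $A$ nilpotent. Now $[\frg,\frg]=\Image D$, and I would bound its dimension: the last column contributes the vector $(c_1,c_2,v,0)$, the first two columns are zero, and the middle block contributes $\Image A$ in the $V$-slot together with the covector images $-(J_Vv)^{\flat_V}$ and $v^{\flat_V}$ acting on $V$, landing in the $e_1,e_2$ slots. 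So $\Image D$ is contained in $\Span{e_1,e_2}\oplus\big(\Span{(c_1,c_2,v)}+(\text{something involving }\Image A\text{ and }v)\big)$. The cleanest count: $\rank D \le 2 + \rank\!\begin{pmatrix} A & v\\ 0 & 0\end{pmatrix}$ where the $2$ accounts for the $e_1,e_2$-components, and the second term is $\le \dim V - \dim\ker\!\begin{pmatrix}A & v\end{pmatrix}^{\!t}$... I would instead argue directly that $\Image D \subseteq \Span{e_1,e_2,e_{2n-1}}^{\perp\text{-ish}}$ — more carefully, that the projection of $\Image D$ to the $e_{2n}$-coordinate is zero (fourth row of $D$ is zero) and to the $e_{2n-1}$-coordinate is zero (third-from-last... in fact the $e_{2n-1}$-row is the third block row which is $(0,0,A,v)$, nonzero in general). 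So the guaranteed drop comes from: the $e_{2n}$-component of everything in $\Image D$ vanishes, giving $\rank D\le 2n-1$; and additionally, since $A$ is nilpotent skew-Hermitian on $V$, if $V\ne 0$ then $\ker\!\begin{pmatrix}A&v\end{pmatrix}\colon \R^{2n-4}\oplus\R \to \R^{2n-4}$ — hmm. I think the robust statement is: $e_{2n}\notin\Image D$ (fourth row zero) and moreover the image of $D$ meets $\frh$ in a subspace of codimension $\ge 2$ inside $\frh$ when $\frg$ is non-abelian, combined with $e_{2n}^*(\Image D)=0$. I would compute $\rank D$ explicitly: it equals $2 + \rank(A\,|\,v)$ when $A$ is such that its image is complementary to the covector part, and in general one has $b_1(\frg) = 2n - \rank D \ge 2n - (2 + (2n-4)) = 2$ generically but with the nilpotency of $A$ improving $\rank A \le 2n-6$ (kernel $\ge 2$, being $\C$-linear), yielding $\rank D \le 2 + (2n-6) + 1 = 2n-3$, hence $b_1\ge 3$.

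The main obstacle I anticipate is the bookkeeping in the isotropic case: correctly computing $\rank D$ (equivalently $\dim\Image D$) in terms of $A$, $v$, $c_1$, $c_2$ and extracting the uniform bound $\rank D\le 2n-3$. The key input that makes it work is that a nilpotent $\C$-linear endomorphism $A$ of a complex vector space of dimension $m$ has real rank at most $2m-2$ (its kernel is a nonzero complex subspace), so $A$ alone already ``wastes'' two real dimensions; one then checks that the extra columns/rows of $D$ (the $v$-column and the two covector entries) cannot recover more than... they add at most $1$ to the rank beyond $\Image A$ in the relevant slot, while the first two columns being zero and the last row being zero cost two more. Assembling these gives $\rank D \le 2n-3$. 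I would also double-check the edge cases $V=0$ (dimension $4$, isotropic): then $D=\begin{smallpmatrix}0&0&c_1\\0&0&c_2\\0&0&0\end{smallpmatrix}$ after setting $a=0$, so $\rank D\le 1$ and $b_1\ge 4-1=3$, consistent. Finally, the abelian case $D=0$ gives $b_1=2n$, which is $\ge 3$ as soon as $2n\ge 4$; and $2n=2$, i.e.\ $\frg=\R^2$, admits no isotropic construction and in the non-isotropic one is abelian with $b_1=2$, so either the statement tacitly assumes $\dim\frg\ge 4$ or one notes $\R^2$ explicitly — I would remark that the bound is sharp and holds for $\dim\frg\ge 4$.
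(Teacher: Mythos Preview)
Your non-isotropic argument is essentially the paper's: nilpotency forces $a=0$ and $A$ nilpotent, hence $\ker A\neq 0$, hence $\rank D=\rank A<2n-2$, so $b_1>2$. (Your observation that $\ker A$ is $J_1$-invariant, hence at least two-dimensional over $\R$, is correct and in fact gives $b_1\geq 4$ here, but the paper is content with the weaker bound.) The long digression on $\dim\frg=2$ is a distraction; both the paper's proof and yours tacitly require $n\geq 2$, and the statement should be read that way.

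In the isotropic case you reach the right conclusion but by a much harder road. You try to bound $\dim\Image D$ by tracking the contributions of $A$, $v$, and the covector entries, eventually arriving at $\rank D\leq 2+\rank A+1\leq 2n-3$. The paper's argument is a single line: once $a=0$, the first two columns of $D$ vanish, i.e.\ $De_1=De_2=0$, so $\dim\ker D\geq 2$ and by rank--nullity $\rank D\leq (2n-1)-2=2n-3$. This bypasses all the bookkeeping about $\Image A$, the position of $v$, and the edge cases you worry over; it does not even use that $A$ is nilpotent or skew-Hermitian beyond forcing $a=0$. Your image-based argument is not wrong, but you already noted in passing that ``the first two columns [are] zero'' --- that observation alone finishes the proof immediately, via the kernel rather than the image.
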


\begin{proof}
    If $\frh_0$ is non-isotropic, we can write $\frg=\R^{2n-1}\rtimes_D\R$ and the derivation $D$ takes the form \eqref{eq:D_positive_g0} with $A\in\mathfrak{u}(\frh_1,J_1,g_1)$ and $a\in\R$. The Lie algebra $\frg$ is nilpotent if and only if $D$ is nilpotent, which is equivalent to $A$ being nilpotent and $a=0$. In particular, this implies that $A$ has non-maximal rank, and $\rank D<2n-2$. Thus \begin{equation}\label{eqn:bettiinequality}
        b_1(\frg)=2n-\rank D >2.
    \end{equation}

    If $\frh_0$ is isotropic, we can also write $\frg=\R^{2n-1}\rtimes_D\R$, where the derivation $D$ now takes the form \eqref{eq:D_zero_g0}. As before, the Lie algebra $\frg$ is nilpotent if and only if $D$ is nilpotent, which is equivalent to $A$ being nilpotent and $a=0$. This implies that $De_1=De_2=0$, thus $\dim\ker D\geq2$ and again we obtain \eqref{eqn:bettiinequality} by the rank-nullity theorem.
\end{proof}


\section{Curvature of almost abelian pseudo-Kähler Lie algebras}\label{section:curvature}

In this section we study the curvature properties of almost abelian pseudo-Kähler Lie algebras. We use the characterization of these algebras given by Proposition~\ref{prop:characterization_non-isotropic_case} in the non-isotropic case, and by Proposition~\ref{prop:characterization_isotropic_case} in the isotropic case.\medskip

Throughout the section, $\nabla$ will denote the Levi-Civita connection on a pseudo-Kähler Lie algebra $(\frg,J,g)$. For $X\in\g$, the covariant derivative gives a linear operator $\nabla_X\colon\g\to\g$; it will be convenient to use these operators to characterize $\nabla$.\medskip

Let us start with the non-isotropic case.

\begin{proposition}\label{prop:curvature_definite_g0}
    Let $(\frg,J,g)$ be an almost abelian pseudo-Kähler Lie algebra with $\frh_0$ non-isotropic, so that $\g=\h\rtimes_D\R$ with $D$ as in \eqref{eq:D_positive_g0} and $(J,g)$ takes the form \eqref{eq:non-isotropic_pseudo-Hermitian}. Then the Levi-Civita connection is given by \begin{enumerate}
        \item $\nabla_X=0$ for $X\in\frh_1$.
        \item $\nabla_{e_{2n-1}}=a(e^{2n-1}\otimes e_{2n}-e^{2n}\otimes e_{2n-1})$.
        \item $\nabla_{e_{2n}}X=AX$ for $X\in\frh_1$ and $\nabla_{e_{2n}}e_{2n-1}=0=\nabla_{e_{2n}}e_{2n}$.
    \end{enumerate}

    Moreover, the only nonzero curvature term is given by $$R(e_{2n-1},e_{2n})=a^2(e^{2n-1}\otimes e_{2n}-e^{2n}\otimes e_{2n-1})=a\nabla_{e_{2n-1}}$$ and the Ricci curvature is given by $$\Ric=-a^2(e^{2n-1}\otimes e^{2n-1}+e^{2n}\otimes e^{2n}).$$
\end{proposition}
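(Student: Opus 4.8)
The plan is to read off the Levi-Civita connection from the Koszul formula
$$2g(\nabla_X Y, Z) = g([X,Y],Z) - g([Y,Z],X) + g([Z,X],Y),$$
using that $\frh = \frh_1 \oplus \Span{e_{2n-1}}$ is abelian, so that the only nonvanishing brackets are $[e_{2n},X] = DX$ for $X\in\frh$; concretely $[e_{2n},X] = AX \in \frh_1$ for $X\in\frh_1$ and $[e_{2n},e_{2n-1}] = a\,e_{2n-1}$. The computation then splits according to which of the $g$-orthogonal summands $\frh_1$ and $\frh_0 = \Span{e_{2n-1},e_{2n}}$ the arguments lie in.

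First I would establish that $\nabla_X = 0$ for $X\in\frh_1$. Feeding such an $X$ into Koszul and letting $Y,Z$ run over the basis, all three brackets vanish whenever $Y,Z\in\frh$, while if one of them is $e_{2n}$ the surviving terms are of the shape $g(AX,\cdot)$, $g(DY,X)$, $g(AZ,X)$, etc. These cancel because $\frh_1\perp\frh_0$ and because $A\in\mathfrak u(\frh_1,J_1,g_1)$ is in particular skew-symmetric with respect to $g_1$. This is the one place requiring a little care: the component $a$ of $D$ is not skew, but it acts inside $\frh_0$, which is orthogonal to $\frh_1$, so it never appears in these pairings. The operators $\nabla_{e_{2n-1}}$ and $\nabla_{e_{2n}}$ are found the same way; torsion-freeness $\nabla_{e_{2n}}X - \nabla_X e_{2n} = [e_{2n},X]$ shortcuts the mixed cases once $\nabla_X|_{\frh_1}=0$ is known, yielding $\nabla_{e_{2n}}|_{\frh_1} = A$, and the remaining computations take place on the two-dimensional definite plane $\frh_0$: there one gets $\nabla_{e_{2n}}|_{\frh_0}=0$ and the ``rotation'' $\nabla_{e_{2n-1}} = a(e^{2n-1}\otimes e_{2n} - e^{2n}\otimes e_{2n-1})$, where the identity $\varepsilon_{2n-1}=\varepsilon_{2n}$ (non-isotropy of $\frh_0$) is what normalizes the coefficient to exactly $a$.

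For the curvature I would use $R(X,Y) = [\nabla_X,\nabla_Y] - \nabla_{[X,Y]}$. Since $\nabla_X=0$ for $X\in\frh_1$ and $[\frh_1,\frh_1]=0$, we get $R(X,\cdot)=0$ for $X\in\frh_1$ at once: $R(X,e_{2n-1})=0$ and $R(X,e_{2n}) = -\nabla_{[X,e_{2n}]} = \nabla_{AX} = 0$. The only term that can survive is $R(e_{2n-1},e_{2n}) = [\nabla_{e_{2n-1}},\nabla_{e_{2n}}] - \nabla_{[e_{2n-1},e_{2n}]} = [\nabla_{e_{2n-1}},\nabla_{e_{2n}}] + a\,\nabla_{e_{2n-1}}$, and evaluating on each basis vector shows $[\nabla_{e_{2n-1}},\nabla_{e_{2n}}]=0$ (on $\frh_1$ because $\nabla_{e_{2n-1}}$ annihilates $\frh_1$ and $A$ preserves $\frh_1$; on $\frh_0$ because $\nabla_{e_{2n}}$ annihilates $\frh_0$ and $\nabla_{e_{2n-1}}$ preserves it). Hence $R(e_{2n-1},e_{2n}) = a\,\nabla_{e_{2n-1}} = a^2(e^{2n-1}\otimes e_{2n}-e^{2n}\otimes e_{2n-1})$.

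For the Ricci tensor I would take the trace $\Ric(Y,Z) = \sum_i \varepsilon_i\, g(R(e_i,Y)Z,e_i)$. As $R(e_i,e_j)$ vanishes unless $\{i,j\}=\{2n-1,2n\}$, only $\Ric(e_{2n-1},e_{2n-1})$ and $\Ric(e_{2n},e_{2n})$ can be nonzero; evaluating $R(e_{2n-1},e_{2n})$ on $e_{2n-1}$ and on $e_{2n}$ and using $\varepsilon_i^2=1$ gives $\Ric = -a^2(e^{2n-1}\otimes e^{2n-1} + e^{2n}\otimes e^{2n})$. The only genuine obstacle is bookkeeping — tracking the signs $\varepsilon_i$ and which summand each argument belongs to — but the structural facts that collapse the computation are the $J$-invariant orthogonal splitting $\frg = \frh_1\oplus\frh_0$ and the membership $A\in\mathfrak u(\frh_1,J_1,g_1)$.
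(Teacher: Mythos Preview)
Your proposal is correct and follows essentially the same approach as the paper's proof: both compute the Levi-Civita connection from the Koszul formula, exploit the orthogonal $J$-invariant splitting $\frg=\frh_1\oplus\frh_0$ together with $A\in\mathfrak{u}(\frh_1,J_1,g_1)$ to make the $\frh_1$-terms vanish, then obtain the curvature as $R(e_{2n-1},e_{2n})=a\nabla_{e_{2n-1}}$ and trace for the Ricci tensor. Your use of torsion-freeness to read off $\nabla_{e_{2n}}|_{\frh_1}=A$ is a mild shortcut over the paper's direct Koszul computation, and your explicit observation that $[\nabla_{e_{2n-1}},\nabla_{e_{2n}}]=0$ makes one step more transparent; the only small omission is that the off-diagonal Ricci term $\Ric(e_{2n-1},e_{2n})$ should be checked as well, though it vanishes immediately since $R(e_{2n-1},e_{2n})e_{2n}\in\Span{e_{2n-1}}\perp e_{2n}$.
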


\begin{proof}
    The Levi-Civita connection $\nabla$ of the metric $g$ on $\frg$ is given by the Koszul formula: \begin{equation}\label{eq:Koszul}
        2g(\nabla_XY,Z)=g([X,Y],Z)-g([Y,Z],X)+g([Z,X],Y)
    \end{equation} for $X,Y,Z\in\frg$. If $X,Y,Z\in\frh$ then $g(\nabla_XY,Z)=0$. Now we compute $$2g(\nabla_XY,e_{2n})=-g([Y,e_{2n}],X)+g([e_{2n},X],Y)=g(DY,X)+g(DX,Y).$$

    Recall that $\frh=\frh_1\oplus J\frh^\perp$ and $J\frh^\perp=\Span{e_{2n-1}}$. If $X,Y\in\frh_1$, then $$2g(\nabla_XY,e_{2n})=g(DY,X)+g(DX,Y)=g(AY,X)+g(AX,Y)=g((A+A^*)X,Y)=0.$$

    Hence $\nabla_XY=0$ for all $X,Y\in\frh_1$. If $X\in\frh_1$, then $g(\nabla_Xe_{2n-1},e_{2n})=0$ and $g(\nabla_Xe_{2n},e_{2n})=0$. Therefore $\nabla_X=0$ for all $X\in\frh_1$.\medskip

    It remains to compute the terms $\nabla_{e_{2n-1}}$ and $\nabla_{e_{2n}}$. First note that $\nabla_{e_{2n-1}}X=\nabla_Xe_{2n-1}=0$ for all $X\in\frh_1$. Next, using that $De_{2n-1}=ae_{2n-1}$, we obtain $$g(\nabla_{e_{2n-1}}e_{2n-1},e_{2n})=ag(e_{2n-1},e_{2n-1})=a\varepsilon_{2n-1}=a\varepsilon_{2n},$$ thus $\nabla_{e_{2n-1}}e_{2n-1}=ae_{2n}$. Similarly, one can show that $\nabla_{e_{2n-1}}e_{2n}=-ae_{2n-1}$. Therefore, we can conclude that $\nabla_{e_{2n-1}}=ae^{2n-1}\otimes e_{2n}-ae^{2n}\otimes e_{2n-1}$. By similar computations, one obtains $g(\nabla_{e_{2n}}X,Y)=g(AX,Y)$ for $X,Y\in\frh_1$ and zero otherwise.\medskip

    For the curvature, it is clear that $R(X,Y)=R(X,e_{2n-1})=0$ and $$R(X,e_{2n})=-\nabla_{[X,e_{2n}]}=\nabla_{DX}=\nabla_{AX}=0$$ for $X,Y\in\frh_1$. Finally, $R(e_{2n-1},e_{2n})=[\nabla_{e_{2n-1}},\nabla_{e_{2n}}]-\nabla_{[e_{2n-1},e_{2n}]}=\nabla_{ae_{2n-1}}$.\medskip

    For the Ricci curvature, consider the orthonormal basis $\{e_1,\ldots,e_{2n}\}$. For $v,w\in\frg$, the Ricci curvature is given by \begin{align*}
        \Ric(v,w)&=\sum_{i=1}^{2n}\varepsilon_ig(R(e_i,v)w,e_i)\\
        &=\sum_{i=1}^{2n-2}\varepsilon_ig(R(e_i,v)w,e_i)+\varepsilon_{2n-1}g(R(e_{2n-1},v)w,e_{2n-1})+\varepsilon_{2n}g(R(e_{2n},v)w,e_{2n})\\
        &=\varepsilon_{2n-1}g(R(e_{2n-1},v)w,e_{2n-1})+\varepsilon_{2n}g(R(e_{2n},v)w,e_{2n}),
    \end{align*} since $R(e_i,v)=0$ for $i=1,\ldots,2n-2$. Using the above expression for the Ricci curvature we obtain that $\Ric(e_{2n-1},e_{2n-1})=-a^2=\Ric(e_{2n},e_{2n})$ and all the other terms are zero.
\end{proof}

Recall that a pseudo-Riemannian Lie algebra $(\frg,g)$ is an \emph{algebraic Ricci soliton} if there exists some $\lambda\in\R$ and a derivation $\delta\in\Der(\frg)$ such that $\ric=\lambda\Id+\delta$, where $\ric$ is the Ricci endomorphism defined by $\Ric=g(\ric\cdot,\cdot)$.\medskip

Let $(\frg,J,g)$ be an almost abelian pseudo-Kähler Lie algebra with $\frh_0$ non-isotropic. It is not difficult to check that the endomorphism $$\delta=\operatorname{diag}(a^2,\ldots,a^2,0,0)\in\End(\frg),$$ defined with respect to the decomposition $\frg=\frh_1\oplus J\frh^\perp\oplus\frh^\perp$, is a derivation of $\frg$. Using Proposition \ref{prop:curvature_definite_g0} we see that $\ric=-\varepsilon_{2n}a^2\Id+\varepsilon_{2n}\delta$ holds. Thus we conclude the following.

\begin{corollary}\label{cor:Ricci_soliton}
    Let $(\frg,J,g)$ be an almost abelian pseudo-Kähler Lie algebra with $\frh_0$ non-isotropic. Then $(\frg,J,g)$ is an algebraic Ricci soliton.
\end{corollary}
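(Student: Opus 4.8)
The ingredients are all in place, so the plan is short. First I would translate the Ricci tensor computed in Proposition \ref{prop:curvature_definite_g0}, namely $\Ric=-a^2(e^{2n-1}\otimes e^{2n-1}+e^{2n}\otimes e^{2n})$, into the Ricci endomorphism $\ric$ defined by $\Ric=g(\ric\cdot,\cdot)$. The basis $\{e_1,\dots,e_{2n}\}$ is orthogonal with $g(e_i,e_i)=\varepsilon_i$, and since $\frh_0$ is $J$-invariant with $Je_{2n-1}=e_{2n}$ we have $\varepsilon_{2n-1}=\varepsilon_{2n}=:\varepsilon$; hence $\ric=-\varepsilon a^2\,\pi_{\frh_0}$, where $\pi_{\frh_0}$ denotes the projection of $\frg=\frh_1\oplus\frh_0$ onto $\frh_0$.

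Next I would exhibit the soliton data. Set $\lambda:=-\varepsilon a^2$ and let $\delta:=\varepsilon a^2\,\pi_{\frh_1}$ be $\varepsilon a^2$ times the projection onto $\frh_1$ along $\frh_0$, i.e.\ $\delta=\operatorname{diag}(\varepsilon a^2,\dots,\varepsilon a^2,0,0)$ in the given basis. Since $\lambda\Id+\delta$ is zero on $\frh_1$ and equals $-\varepsilon a^2\Id$ on $\frh_0$, it coincides with $\ric$ by the previous paragraph. It therefore only remains to verify that $\delta\in\Der(\frg)$.

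This is the one step that requires an argument, though there is no real obstacle. Since $\frh$ is abelian, the only nonzero brackets are $[e_{2n},X]=DX$ for $X\in\frh$, so the Leibniz identity for $\delta$ reduces to $\delta e_{2n}=0$ — which holds because $e_{2n}\in\frh^\perp\subset\frh_0$ — together with $\delta\circ D=D\circ\delta$ on $\frh$. Now $\delta|_{\frh}$ is a scalar multiple of the projection of $\frh=\frh_1\oplus J\frh^\perp$ onto $\frh_1$, while by Theorem \ref{thm:characterization_non-isotropic_case} the endomorphism $D$ has the block-diagonal shape \eqref{eq:D_positive_g0} relative to $\frh_1\oplus J\frh^\perp$; in particular $D$ preserves both summands and hence commutes with that projection. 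This gives $\delta\in\Der(\frg)$, and together with the identity $\ric=\lambda\Id+\delta$ we conclude that $(\frg,J,g)$ is an algebraic Ricci soliton. The dichotomy $\varepsilon=\pm1$, according to whether $g|_{\frh_0}$ is positive or negative definite, is immaterial: in either case $\pm\delta$ is a derivation, so the soliton identity holds with $\lambda=\mp a^2$.
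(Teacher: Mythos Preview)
Your proof is correct and follows the same approach as the paper's: you set $\delta$ to be a scalar multiple of the projection onto $\frh_1$, verify it is a derivation via the block-diagonal form \eqref{eq:D_positive_g0} of $D$, and read off the soliton identity from Proposition~\ref{prop:curvature_definite_g0}. If anything you are more careful than the paper, which writes $\delta=\operatorname{diag}(a^2,\dots,a^2,0,0)$ and $\ric=-a^2\Id+\delta$ without the factor $\varepsilon$; your remark that $\pm\delta$ is a derivation either way makes this harmless.
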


We consider now the isotropic case.

\begin{proposition}\label{prop:curvature_isotropic_h0}
    Let $(\frg,J,g)$ be an almost abelian pseudo-Kähler Lie algebra with $\frh_0$ isotropic, so that $\g=\h\rtimes_D\R$ with $D$ as in \eqref{eq:D_zero_g0} and $(J,g)$ takes the form \eqref{eqn:almosthermitianstructurezero}. Then the Levi-Civita connection is given by \begin{enumerate}
        \item $\nabla_{e_1}=\nabla_{e_2}=0$.
        \item $\nabla_X=0$ for $X\in V$.
        \item $\nabla_{e_{2n-1}}=-c_2(e^{2n-1}\otimes e_1+e^{2n}\otimes e_2)$.
        \item $\nabla_{e_{2n}}X=g(v,JX)e_1+g(v,X)e_2+AX$ for $X\in V$ and $$\begin{aligned}
            \nabla_{e_{2n}}e_1&=-ae_1,\\
            \nabla_{e_{2n}}e_2&=-ae_2,
        \end{aligned}\qquad\begin{aligned}
            \nabla_{e_{2n}}e_{2n-1}&=c_1e_1+v+ae_{2n-1},\\
            \nabla_{e_{2n}}e_{2n}&=c_1e_2+Jv+ae_{2n}.
        \end{aligned}$$
    \end{enumerate}

    Moreover, the only nonzero curvature term is given by $$R(e_{2n-1},e_{2n})=-3ac_2(e^{2n-1}\otimes e_1+e^{2n}\otimes e_2)=3a\nabla_{e_{2n-1}}$$ and the metric $g$ is Ricci-flat.
\end{proposition}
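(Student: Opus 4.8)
The plan is to mirror the structure of the proof of Proposition \ref{prop:curvature_definite_g0}: first compute the Levi-Civita connection operators $\nabla_X$ via the Koszul formula \eqref{eq:Koszul}, then assemble the curvature $R(X,Y)=[\nabla_X,\nabla_Y]-\nabla_{[X,Y]}$, and finally trace to get the Ricci tensor. The basis \eqref{eqn:almosthermitianstructurezero} is adapted to the filtration $\frh^\perp\subset\frh_0\subset\frh_0^\perp\subset\frh\subset\frg$, with the only nonzero brackets being $[e_{2n},X]=DX$ for $X\in\frh$, where $D$ has the block form \eqref{eq:D_zero_g0}. The metric pairs $e_1$ with $e_{2n}$, $e_2$ with $e_{2n-1}$ (up to sign), and is $\varepsilon_i$-diagonal on $V=\Span{e_3,\dots,e_{2n-2}}$, so I must be careful to use the correct dual vectors when solving $2g(\nabla_XY,Z)=g([X,Y],Z)-g([Y,Z],X)+g([Z,X],Y)$ for $\nabla_XY$.

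First I would observe that if $X,Y,Z\in\frh$ all brackets vanish, so $g(\nabla_XY,Z)=0$; the only possibly nonzero component of $\nabla_XY$ for $X,Y\in\frh$ is along $e_{2n}$-pairings, i.e.\ one computes $2g(\nabla_XY,e_{2n})=g(DY,X)+g(DX,Y)$ and, when relevant, pairings with $e_1,e_2$ coming from $g(\nabla_XY,\cdot)$ against $e_{2n-1},e_{2n}$. Plugging in the explicit entries of \eqref{eq:D_zero_g0} and using $A\in\mathfrak u(V,J_V,g_V)$ (so $A+A^*=0$ on $V$), together with the identities $g(v,JX)=-g(Jv,X)$, one checks termwise that $\nabla_{e_1}=\nabla_{e_2}=0$ and $\nabla_X=0$ for $X\in V$, matching items (1)--(2). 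For $\nabla_{e_{2n-1}}$ one uses $De_{2n-1}=-(J_Vv)^{\flat_V}\cdot e_1? $— more precisely the third column of \eqref{eq:D_zero_g0}, i.e.\ $De_{2n-1}=-(J_Vv)^{\flat_V}$ mapped appropriately — no, rather the relevant column gives $De_{2n-1}$ in terms of the $w_1,w_2$ and $A$ entries; the one parameter that survives the Koszul computation is $c_2$, yielding item (3). For $\nabla_{e_{2n}}$ one uses the last column of $D$, namely $De_{2n}=c_1e_1+c_2e_2+v+ae_{2n-1}$ together with $De_{2n-1}$, and carefully tracks which Koszul terms survive; this reproduces the displayed formulas in item (4), where the $g(v,JX)e_1+g(v,X)e_2+AX$ term comes from the off-diagonal $-(J_Vv)^{\flat_V}$ and $v^{\flat_V}$ blocks acting on $X\in V$, and the $ae_{2n-1},ae_{2n}$ terms come from the diagonal $-a,-a,a$ entries combined with the metric pairing.

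Next I would compute the curvature. Since $\nabla_{e_1}=\nabla_{e_2}=0$ and $\nabla_X=0$ for $X\in V$, the only operators that can contribute are $\nabla_{e_{2n-1}}$ and $\nabla_{e_{2n}}$. One checks $R(X,Y)=0$ whenever at least one of $X,Y$ lies in $\frh^\perp\oplus J\frh^\perp\oplus V$: for $X,Y$ in this subspace all brackets and all $\nabla$-operators vanish; for $X$ in this subspace and $Y=e_{2n}$, one has $R(X,e_{2n})=[\nabla_X,\nabla_{e_{2n}}]-\nabla_{[X,e_{2n}]}=-\nabla_{DX}$, and since $DX\in\frh^\perp\oplus J\frh^\perp\oplus V$ (the first three rows of $D$ applied to such $X$ land there, using the block structure), $\nabla_{DX}=0$; the analogous statement holds for $Y=e_{2n-1}$ since $De_{2n-1}$ also lies in $W:=\frh^\perp\oplus J\frh^\perp\oplus V$ — wait, one must double-check that $\nabla_{DX}$ vanishes, which holds because $\nabla$ vanishes on all of $W$. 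The remaining term is $R(e_{2n-1},e_{2n})=[\nabla_{e_{2n-1}},\nabla_{e_{2n}}]-\nabla_{[e_{2n-1},e_{2n}]}$; here $[e_{2n-1},e_{2n}]=-De_{2n-1}$ has a component along $e_1$ (and along $V$), but $\nabla$ kills the $V$ and $e_1$ components, leaving at most $-\nabla$ of the $e_{2n-1}$-component of $De_{2n-1}$, which is $a e_{2n-1}$, contributing $-a\nabla_{e_{2n-1}}$; combined with $[\nabla_{e_{2n-1}},\nabla_{e_{2n}}]$, a direct matrix computation using items (3)--(4) gives $R(e_{2n-1},e_{2n})=-3ac_2(e^{2n-1}\otimes e_1+e^{2n}\otimes e_2)=3a\nabla_{e_{2n-1}}$. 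Finally, for the Ricci tensor $\Ric(v,w)=\sum_i\varepsilon_i^{\,\ast}g(R(e_i,v)w,e_i^{\,\ast})$ (taking care that the basis is not orthonormal but a Witt basis on the $e_1,e_2,e_{2n-1},e_{2n}$ block), only $R(e_{2n-1},e_{2n})$ is nonzero and its image lies in $\Span{e_1,e_2}$, which is $g$-orthogonal to $e_{2n-1}$ and $e_{2n}$ only in the combinations that make every trace term vanish; one checks directly that all components of $\Ric$ are zero, so $g$ is Ricci-flat. The main obstacle is purely bookkeeping: correctly handling the non-orthonormal (Witt) part of the basis in the Koszul formula and in the Ricci trace, and keeping track of signs from $g(J\cdot,\cdot)=-g(\cdot,J\cdot)$ and from the skew-Hermitian condition on $A$; no conceptual difficulty arises beyond this, and the factor of $3$ in the curvature is the one place where a sign or coefficient slip is easiest to make.
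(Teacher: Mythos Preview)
Your approach is essentially the same as the paper's: Koszul formula for $\nabla$, then $R(X,Y)=[\nabla_X,\nabla_Y]-\nabla_{[X,Y]}$, then trace for $\Ric$. Two small points where the paper is cleaner than your sketch: for item (4) the paper uses the torsion-free identity $\nabla_{e_{2n}}Y=\nabla_Y e_{2n}+[e_{2n},Y]=DY$ (for $Y\in\frh$) rather than Koszul directly, which avoids most of the bookkeeping you worry about; and for the Ricci trace the paper passes to the explicit orthonormal basis $E_1=\tfrac{1}{\sqrt2}(e_1+e_{2n})$, $E_2=\tfrac{1}{\sqrt2}(e_2+e_{2n-1})$, $E_{2n-1}=\tfrac{1}{\sqrt2}(e_2-e_{2n-1})$, $E_{2n}=\tfrac{1}{\sqrt2}(e_1-e_{2n})$, which makes the vanishing transparent. (Also note: $D\in\End(\frh)$, so there is no ``$De_{2n}$''; the last column of \eqref{eq:D_zero_g0} is $De_{2n-1}=c_1e_1+c_2e_2+v+ae_{2n-1}$, and the sign in $-\nabla_{[e_{2n-1},e_{2n}]}=\nabla_{De_{2n-1}}=a\nabla_{e_{2n-1}}$ is $+a$, not $-a$.)
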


\begin{proof}
    The proof is analogous to that of Proposition \ref{prop:curvature_definite_g0}. We use the Koszul formula \eqref{eq:Koszul} to compute the covariant derivative operators in each of the four cases listed in the statement. We first note that $g(\nabla_XY,Z)=0$ for all $X,Y,Z\in\frh$.\medskip

    (1) We have $g(\nabla_{e_1}X,Y)=0$ for all $X,Y\in\frh$ and $g(\nabla_{e_1}e_{2n},X)=-g(e_{2n},\nabla_{e_1}X)$ for all $X\in\frg$. In particular $g(\nabla_{e_1}e_{2n},e_{2n})=0$. Therefore, we only need to compute the following terms: \begin{itemize}
        \item $2g(\nabla_{e_1}e_1,e_{2n})=-g([e_1,e_{2n}],e_1)+g([e_{2n},e_1],e_1)=2g(De_1,e_1)=-2ag(e_1,e_1)=0$ since $e_1$ is null.
        \item $2g(\nabla_{e_1}e_2,e_{2n})=-g([e_2,e_{2n}],e_1)+g([e_{2n},e_1],e_2)=g(De_2,e_1)+g(De_1,e_2)=-2ag(e_1,e_2)=0$ since $e_1$ and $e_2$ are orthogonal.
        \item Let $X\in V$, then $2g(\nabla_{e_1}X,e_{2n})=-g([X,e_{2n}],e_1)+g([e_{2n},e_1],X)=g(DX,e_1)+g(De_1,X)=0$ since $DX\in\frh$ and $\frh$ is orthogonal to $e_1$.
        \item $2g(\nabla_{e_1}e_{2n-1},e_{2n})=-g([e_{2n-1},e_{2n}],e_1)+g([e_{2n},e_1],e_{2n-1})=g(De_{2n-1},e_1)+g(De_1,e_{2n-1})=0$ since $De_{2n-1}\in\frh$ and $\frh$ is orthogonal to $e_1$.
    \end{itemize}

    We thus conclude that $\nabla_{e_1}=0$. To compute $\nabla_{e_2}$ we proceed in a similar way. The only difference is that $g(e_2,e_{2n-2})=-1$. Nevertheless, $$2g(\nabla_{e_2}e_{2n-1},e_{2n})=g(De_{2n-1},e_2)+g(De_2,e_{2n-1})=ag(e_{2n-1},e_2)+g(-ae_2,e_{2n-1})=0.$$

    Then $\nabla_{e_2}=0$ as well.\medskip

    (2) Let $X\in V$. Then for all $Y\in V$ we have $$2g(\nabla_XY,e_{2n})=g(DY,X)+g(DX,Y)=g(AY,X)+g(AX,Y)=g((A+A^*)X,Y)=0,$$ where we have used once again that $e_1$ and $e_2$ are orthogonal to $V$. We also have \begin{align*}
        2g(\nabla_Xe_{2n-1},e_{2n})&=g(De_{2n-1},X)+g(DX,e_{2n-1})=g(v,X)+g(v^tg_VXe_2,e_{2n-1})\\
        &=g(v,X)+g(v,X)g(e_2,e_{2n-1})=0,
    \end{align*} since $g(e_2,e_{2n-1})=-1$. By similar arguments to the previous case we conclude that $\nabla_X=0$ for $X\in V$.\medskip

    (3) First note that $\nabla_{e_{2n-1}}X=\nabla_Xe_{2n-1}=0$ for all $X\in\frh^\perp\oplus J\frh^\perp\oplus V$. Now we compute $$g(\nabla_{e_{2n-1}}e_{2n-1},e_{2n})=g(De_{2n-1},e_{2n-1})=g(c_2e_2,e_{2n-1})=-c_2$$ and $g(\nabla_{e_{2n-1}}e_{2n},e_{2n-1})=-g(e_{2n},\nabla_{e_{2n-1}}e_{2n-1})=c_2$. All the other terms are zero, thus we conclude that $\nabla_{e_{2n-1}}=-c_2e^{2n-1}\otimes e_1-c_2e^{2n}\otimes e_2$.\medskip

    (4) Here we make use of the fact that $\nabla$ is torsion-free: \begin{itemize}
        \item $\nabla_{e_{2n}}e_1=\nabla_{e_1}e_{2n}+[e_{2n},e_1]=De_1=-ae_1$.
        \item $\nabla_{e_{2n}}e_2=\nabla_{e_2}e_{2n}+[e_{2n},e_2]=De_2=-ae_2$.
        \item $\nabla_{e_{2n}}X=\nabla_Xe_{2n}+[e_{2n},X]=DX=v^tg_VJ_VXe_1+v^tg_VXe_2+AX=g(v,JX)e_1+g(v,X)e_2+AX$ for all $X\in V$.
        \item $\nabla_{e_{2n}}e_{2n-1}=\nabla_{e_{2n-1}}e_{2n}+[e_{2n},e_{2n-1}]=-c_2e_2+De_{2n-1}=c_1e_1+v+ae_{2n-1}$.
    \end{itemize}

    Finally we compute $\nabla_{e_{2n}}e_{2n}$: \begin{itemize}
        \item $g(\nabla_{e_{2n}}e_{2n},e_1)=-g(e_{2n},\nabla_{e_{2n}}e_1)=-g(e_{2n},-ae_1)=a$.
        \item $g(\nabla_{e_{2n}}e_{2n},e_2)=-g(e_{2n},\nabla_{e_{2n}}e_2)=-g(e_{2n},-ae_2)=0$.
        \item $g(\nabla_{e_{2n}}e_{2n},X)=-g(e_{2n},\nabla_{e_{2n}}X)=-g(e_{2n},g(v,JX)e_1)=-g(v,JX)=g(Jv,X)$.
        \item $g(\nabla_{e_{2n}}e_{2n},e_{2n-1})=-g(e_{2n},\nabla_{e_{2n}}e_{2n-1})=-g(e_{2n},c_1e_1)=-c_1$.
    \end{itemize}

    Therefore we get $\nabla_{e_{2n}}e_{2n}=c_1e_2+Jv+ae_{2n}$.\medskip
    
    Let us compute now the Riemann curvature tensor of $g$. We immediately see that $R(X,Y)=0$ for all $X,Y\in\frh$. Now we have $$R(e_1,e_{2n})=-\nabla_{[e_1,e_{2n}]}=\nabla_{De_1}=-a\nabla_{e_1}=0,$$ and similarly $R(e_2,e_{2n})=R(X,e_{2n})=0$ for all $X\in V$. Using that $\nabla_{De_{2n-1}}=a\nabla_{e_{2n-1}}$ we get $$R(e_{2n-1},e_{2n})=\nabla_{e_{2n-1}}\nabla_{e_{2n}}-\nabla_{e_{2n}}\nabla_{e_{2n-1}}+a\nabla_{e_{2n-1}}.$$

    The above expression is not identically zero only when evaluated in $e_{2n-1}$ and $e_{2n}$. Indeed: \begin{align*}
        R(e_{2n-1},e_{2n})e_{2n-1}&=\nabla_{e_{2n-1}}\nabla_{e_{2n}}e_{2n-1}-\nabla_{e_{2n}}\nabla_{e_{2n-1}}e_{2n-1}+a\nabla_{e_{2n-1}}e_{2n-1}\\
        &=\nabla_{e_{2n-1}}(c_1e_1+v+ae_{2n-1})-\nabla_{e_{2n}}(-c_2e_1)+a(-c_2e_1)\\
        &=a\nabla_{e_{2n-1}}e_{2n-1}+c_2\nabla_{e_{2n}}e_1-ac_2e_1\\
        &=-3ac_2e_1,\\
        R(e_{2n-1},e_{2n})e_{2n}&=R(e_{2n-1},e_{2n})Je_{2n-1}=JR(e_{2n-1},e_{2n})e_{2n-1}\\
        &=-3ac_2e_2.
    \end{align*}
    Therefore we conclude that $R(e_{2n-1},e_{2n})=-3ac_2(e^{2n-1}\otimes e_1+e^{2n}\otimes e_2)=3a\nabla_{e_{2n-1}}$.\medskip

    Let $E_1,\ldots,E_{2n}$ be an orthonormal basis of $\frg$, where $E_i=e_i$ for $i=3,\ldots,2n-2$ and $$E_1=\tfrac{1}{\sqrt{2}}(e_1+e_{2n}),\quad E_2=\tfrac{1}{\sqrt{2}}(e_2+e_{2n-1}),\quad E_{2n-1}=\tfrac{1}{\sqrt{2}}(e_2-e_{2n-1}),\quad E_{2n}=\tfrac{1}{\sqrt{2}}(e_1-e_{2n}).$$

    Since $R(X,w)=0$ for all $X\in\frh^\perp\oplus J\frh^\perp\oplus V$, $\Ric(X,w)=\sum_{i=1}^{2n}\varepsilon_ig(R(E_i,X)w,E_i)=0$. Moreover, since pseudo-K\"ahler metrics satisfy $\Ric(X,JX)=0$, we also have that $\Ric(e_{2n-1},e_{2n})=\Ric(e_{2n-1},Je_{2n-1})=0$. Now we compute \begin{align*}
        \Ric(e_{2n-1},e_{2n-1})&=\sum_{i=1}^{2n}\varepsilon_ig(R(E_i,e_{2n-1})e_{2n-1},E_i)\\
        &=\varepsilon_1g(R(E_1,e_{2n-1})e_{2n-1},E_1)+\varepsilon_{2n}g(R(E_{2n},e_{2n-1})e_{2n-1},E_{2n}),
    \end{align*} where $\varepsilon_1=g(E_1,E_1)=1$ and $\varepsilon_{2n}=g(E_{2n},E_{2n})=-1$. The first term gives $\frac{3}{2}ac_2$ and the second gives $-\frac{3}{2}ac_2$. Therefore $\Ric(e_{2n-1},e_{2n-1})=0$. Finally,
    \[\Ric(e_{2n},e_{2n})=\Ric(Je_{2n-1},Je_{2n-1})=\Ric(e_{2n-1},e_{2n-1})=0.\] This concludes the proof that the metric $g$ is Ricci-flat.
\end{proof}

It is known that every nilpotent pseudo-Kähler Lie algebra is Ricci-flat \cite[Lemma 6.4]{FinoPartonSalamon} (see \cite[Lemma 1.7]{Conti_Rossi_SegnanDalmasso_2023} for a more direct proof) and that so is every unimodular pseudo-Kähler Lie algebra with abelian complex structure \cite[Corollary 4.7]{BS12}. More results about Ricci-flat pseudo-Kähler Lie algebras can be found in \cite{Yamada12}.

\begin{remark}\label{remark:freibertflat}
    In the isotropic case, the condition $c_2=0$ characterizes the case when $D$ is contained in the subspace $\tilde{\lie k}_{\lie u(p,q)}$ of \cite{Freibert24_TFHS}. By the results therein, this is a sufficient condition for the metric to be flat; it can be seen from Proposition~\ref{prop:curvature_isotropic_h0} that this condition is not necessary.
\end{remark}

Recall that a Lie algebra $\frg$ is \emph{unimodular} if $\Tr(\ad(X))=0$ for all $X\in\frg$.

\begin{corollary}\label{cor:flat_almost_ab_pK}
    Let $(\frg,J,g)$ be a unimodular almost abelian pseudo-Kähler Lie algebra. Then $g$ is flat.
\end{corollary}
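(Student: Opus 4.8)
The plan is to reduce everything to the two cases already analyzed, and to observe that unimodularity forces the scalar parameter $a$ appearing in the derivation $D$ to vanish, whence flatness follows immediately from the curvature computations of Propositions \ref{prop:curvature_definite_g0} and \ref{prop:curvature_isotropic_h0}.

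First I would record what unimodularity means for an almost abelian Lie algebra $\frg=\frh\rtimes_D\R$. For $X\in\frh$ one has $[X,Y]=0$ for $Y\in\frh$ and $[X,e_{2n}]=-DX\in\frh$, so $\ad(X)$ is nilpotent and hence traceless; consequently $\frg$ is unimodular if and only if $\Tr(\ad(e_{2n}))=\Tr(D)=0$. I would also isolate the elementary fact that any endomorphism $A$ which is skew-symmetric with respect to a non-degenerate scalar product $g_0$ has $\Tr(A)=0$: writing $A=-g_0^{-1}A^tg_0$ gives $\Tr(A)=-\Tr(A^t)=-\Tr(A)$. In particular every $A\in\mathfrak{u}(V_0,J_0,g_0)$ is traceless, since such an $A$ is $g_0$-skew-symmetric.

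For the non-isotropic case, Theorem \ref{thm:characterization_non-isotropic_case} tells us that $D$ has the block form \eqref{eq:D_positive_g0} with $A\in\mathfrak{u}(\frh_1,J_1,g_1)$ and $a\in\R$; hence $\Tr(D)=\Tr(A)+a=a$, and unimodularity forces $a=0$. By Proposition \ref{prop:curvature_definite_g0} the only potentially nonzero curvature term is $R(e_{2n-1},e_{2n})=a^2(e^{2n-1}\otimes e_{2n}-e^{2n}\otimes e_{2n-1})$, which vanishes, so $g$ is flat. For the isotropic case, Theorem \ref{thm:characterization_isotropic_case} gives $D$ in the form \eqref{eq:D_zero_g0} with $A\in\mathfrak{u}(V,J_V,g_V)$; reading off the diagonal blocks with respect to $\frh=\frh^\perp\oplus J\frh^\perp\oplus V\oplus\Span{e_{2n-1}}$ yields $\Tr(D)=-a-a+\Tr(A)+a=-a$, so again unimodularity gives $a=0$. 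Proposition \ref{prop:curvature_isotropic_h0} then shows that the unique possibly nonzero curvature term $R(e_{2n-1},e_{2n})=-3ac_2(e^{2n-1}\otimes e_1+e^{2n}\otimes e_2)$ vanishes, so $g$ is flat.

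I do not expect a genuine obstacle: all the substance sits in the two preceding propositions and in the triviality that skew-symmetric endomorphisms are traceless. The only point deserving a line of care is the reduction of unimodularity to the condition $\Tr(D)=0$, which relies on the fact that $\ad(X)$ is nilpotent for $X$ in the abelian ideal $\frh$.
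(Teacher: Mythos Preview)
Your proposal is correct and follows essentially the same approach as the paper: in each of the two cases you compute $\Tr(D)$ in terms of $a$ using that $A$ is skew-Hermitian (hence traceless), conclude $a=0$ from unimodularity, and then read off flatness from the curvature formulas of Propositions~\ref{prop:curvature_definite_g0} and~\ref{prop:curvature_isotropic_h0}. Your write-up is in fact slightly more detailed than the paper's, spelling out why unimodularity is equivalent to $\Tr(D)=0$ and why skew-symmetric endomorphisms are traceless.
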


\begin{proof}
    Suppose that $\frh_0$ is non-isotropic. Then $\Tr(D)=\Tr(A)+a=a$ since $\Tr(A)=0$ for any $A\in\mathfrak{u}(\frh_1,J_1,g_1)$. If $\frg$ is unimodular, then $a=0$ and the metric $g$ is flat by Proposition \ref{prop:curvature_definite_g0}. In fact in this case the converse is also true, i.e.\ if the metric $g$ is flat, then $\frg$ is unimodular. Now suppose that $\frh_0$ is isotropic. Then $\Tr(D)=-a$, thus if $\frg$ is unimodular, then $g$ is flat by Proposition \ref{prop:curvature_isotropic_h0}.
\end{proof}

\begin{remark}\label{remark:hypersymplectic}
    Recall that a hypersymplectic structure on a Lie algebra is a triple $(J,E,g)$ such that $(J,g)$ defines a pseudo-K\"ahler structure, $(E,g)$ defines a para-K\"ahler structure, and $JE=-EJ$. It was conjectured in \cite[Conjecture 4.24]{CG25} that every two-step nilpotent hypersymplectic Lie algebra is flat. Since every hypersymplectic Lie algebra is in particular pseudo-Kähler, Corollary \ref{cor:flat_almost_ab_pK} gives a positive answer to this conjecture in the nilpotent almost abelian case.
\end{remark}

If we focus on the nilpotent case, we have the following.

\begin{corollary}\label{cor:completeness}
    Let $(\frg,J,g)$ be a nilpotent almost abelian pseudo-Kähler Lie algebra. Then $g$ is complete.
\end{corollary}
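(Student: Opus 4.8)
The plan is to prove geodesic completeness of the left-invariant metric $g$ on the simply connected Lie group $G$ with Lie algebra $\frg$; by homogeneity this amounts to showing that every geodesic issuing from the identity is defined on all of $\R$. Recall that a curve $\gamma$ in $G$ with $\gamma(0)$ the identity is a geodesic if and only if the left-trivialized velocity $X(t):=(dL_{\gamma(t)})^{-1}\dot\gamma(t)\in\frg$ solves the quadratic ODE $\dot X=-\nabla_XX$, where $\nabla$ is the left-invariant Levi-Civita operator on $\frg$. I would first reduce the statement to showing that every solution of this ODE in $\frg$ is defined for all $t\in\R$: once $X(t)$ is known globally, $\gamma$ is recovered by solving $\dot\gamma=dL_{\gamma}(X(t))$, and this has a global solution as well, since for any complete left-invariant Riemannian metric $\rho$ on $G$ the $\rho$-speed of $\gamma$ equals $\|X(t)\|_\rho$, which is bounded on bounded time intervals, so $\gamma$ has finite $\rho$-length there and cannot escape in finite time.

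The substance is then to insert the explicit form of $\nabla$ from Propositions \ref{prop:curvature_definite_g0} and \ref{prop:curvature_isotropic_h0}, noting first that nilpotency of $\frg$ forces $a=0$ (as in the proof of Proposition \ref{prop:b1>2_in_PK}, where nilpotency of $D$ was shown to imply $a=0$ and $A$ nilpotent). In the non-isotropic case, with $a=0$ one has $\nabla_X=0$ for all $X\in\frh$, and $\nabla_{e_{2n}}$ acts as $A$ on $\frh_1$ and annihilates $\frh_0=\Span{e_{2n-1},e_{2n}}$; writing $X=X_1+p\,e_{2n-1}+q\,e_{2n}$ with $X_1\in\frh_1$, a direct computation gives $\nabla_XX=qAX_1$, so the geodesic equation reads $\dot p=0$, $\dot q=0$, $\dot X_1=-qAX_1$, a linear ODE with constant coefficients, hence globally solvable. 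In the isotropic case, again with $a=0$, Proposition \ref{prop:curvature_isotropic_h0} gives, writing $X=x_1e_1+x_2e_2+X_V+p\,e_{2n-1}+q\,e_{2n}$ with $X_V\in V$, that $\nabla_XX$ has vanishing $e_{2n-1}$- and $e_{2n}$-components, $V$-component $q(AX_V+pv+qJv)$, and $e_1$- and $e_2$-components polynomial in $p,q$ plus terms linear in $X_V$. The geodesic system therefore decouples: $p,q$ are constant; then $X_V$ solves an affine-linear ODE with constant coefficients; and finally $x_1,x_2$ are obtained by integrating explicit functions of $t$. In both cases $X(t)$, hence $\gamma(t)$, is globally defined, so $g$ is complete.

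No step here is genuinely hard: once $a=0$ the Levi-Civita connection is so degenerate that the geodesic equation is essentially linear, and the only point needing care is the passage from a globally defined $\frg$-valued solution $X(t)$ to a globally defined geodesic $\gamma(t)$ in $G$, handled by the auxiliary complete Riemannian metric above. Structurally, what underlies the computation is that in each case $\frg$ carries a $\nabla$-parallel codimension-two subspace — $\frh_1$ in the non-isotropic case, $\frh^\perp\oplus J\frh^\perp\oplus V$ in the isotropic case — on which the two complementary coordinates are constant along geodesics, reducing the geodesic flow to an affine flow; but the coordinatewise analysis is the cleanest way to present the argument. (One may also observe that, $\frg$ being nilpotent and hence unimodular, Corollary \ref{cor:flat_almost_ab_pK} already gives that $g$ is flat, so the corollary asserts precisely that these flat homogeneous pseudo-Kähler metrics are geodesically complete.)
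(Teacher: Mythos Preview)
Your argument is correct and self-contained, but it differs from the paper's approach. The paper first invokes Corollary \ref{cor:flat_almost_ab_pK} to deduce flatness, and then appeals to a criterion of Segal \cite{Segal92}: a flat left-invariant affine connection on a Lie group is complete if and only if each operator $\nabla_X$ is nilpotent. This nilpotency is then read off directly from Propositions \ref{prop:curvature_definite_g0} and \ref{prop:curvature_isotropic_h0} with $a=0$. Your approach bypasses both flatness and Segal's theorem by integrating the geodesic equation explicitly; the triangular structure you exploit is exactly the nilpotency of the $\nabla_X$ that the paper checks, so the two arguments are close in spirit. The paper's route is shorter and more conceptual once Segal's result is granted, while yours is more elementary and does not require the reader to look up an external reference. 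Your handling of the passage from a global $X(t)$ to a global $\gamma(t)$ via an auxiliary left-invariant Riemannian metric is a nice touch that the paper does not need, since Segal's criterion already concerns completeness of $\nabla$ on the group.
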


\begin{proof}
    Since $\frg$ is nilpotent, in particular unimodular, the metric $g$ is flat by Corollary \ref{cor:flat_almost_ab_pK}. Therefore, the flat torsion-free Levi-Civita connection $\nabla$ of $g$ defines a \emph{left-symmetric structure} on $\frg$. By the results of \cite{Segal92}, the completeness of $\nabla$ is equivalent to $\lambda\colon\frg\to\mathfrak{gl}(\frg)$, defined by $\lambda(X)=\nabla_X$, being a nilpotent endomorphism for every $X\in\frg$. By looking at Propositions \ref{prop:curvature_definite_g0} and \ref{prop:curvature_isotropic_h0} we see that this is the case.
\end{proof}

\begin{remark}\label{remark:nonuniqueness}
    It was shown in \cite{AABRW25} that complex structures on a nilpotent almost abelian Lie algebra are unique up to isomorphism. However, pseudo-Kähler structures on a nilpotent almost abelian Lie algebra are not unique up to isometric isomorphism. Indeed, consider the pseudo-K\"ahler Lie algebras obtained by setting $a$ and $A$ to zero in \eqref{eq:D_zero_g0} and choosing $v\neq0$. As observed in Example~\ref{example:6d_nilpotent}, the resulting Lie algebras are isomorphic to $\frh_{10}$. However, Proposition~\ref{prop:curvature_isotropic_h0} shows that if $c_2=0$, then $\nabla_{e_i}=0$ for $i=1,\ldots,5$; but if $c_2\neq0$, then $\nabla_{e_5}$ is linearly independent with $\nabla_{e_6}$. Hence, the rank of the linear map $\nabla\colon\frg\to\mathfrak{gl}(\frg)$ is different in each case, so the corresponding metrics are not related by an isometric isomorphism (see also \cite[Proposition 3.9]{CFU04}).
\end{remark}

\begin{remark}\label{remark:uniquenessuptoisometries}
    We have seen in Corollaries \ref{cor:flat_almost_ab_pK} and \ref{cor:completeness} that every pseudo-Kähler structure on a nilpotent almost abelian Lie algebra is flat and complete, so that the resulting pseudo-Riemannian manifolds are isometric. Therefore, the non-uniqueness observed in Remark~\ref{remark:nonuniqueness} depends on the fact that one only considers isometries that respect the Lie algebra structure.
\end{remark}


\section{Classification of almost abelian pseudo-Kähler Lie algebras}\label{section:classification}

In this section we classify almost abelian pseudo-Kähler Lie algebras in every dimension. For that, in Subsection \ref{section:orbits} we recall the classification of adjoint orbits of $\mathrm{U}(p,q)$ obtained in \cite{Burgoyne_Cushman_1977}. Then in Subsection \ref{section:classification_every_dim} we obtain the aforementioned classification, which we describe more explicitly in dimensions six and eight, respectively in Subsections \ref{section:6d_classification} and \ref{section:8d_classification}.

\subsection{Orbits in \texorpdfstring{$\mathrm{U}(p,q)$}{U(p,q)}}\label{section:orbits}

Here we recall the classification of adjoint orbits of $\mathrm{U}(p,q)$ from \cite{Burgoyne_Cushman_1977}, or equivalently, of skew-Hermitian endomorphisms up to unitary transformations. We will adapt the notation slightly; in particular, we will favour representations over equivalence classes.\medskip

We consider a complex vector space $V$ endowed with a Hermitian form $h$ and a skew-Hermitian endomorphism $A\colon V\to V$. We say that $(A,V,h)$ is equivalent to $(A',V',h')$ if there is an isomorphism $\phi\colon V\to V'$ such that $\phi\circ A=A'\circ\phi$ and $h'(\phi\cdot,\phi\cdot)=h$.\medskip

For $m\geq0$, consider the Hermitian form on $\C^{m+1}$ defined by the matrix $$h_m=\begin{pmatrix}
    &&&&1\\
    &&&-1&\\
    &&\iddots\\
    &-1\\
    1
\end{pmatrix}\,(m\text{ even}),\qquad h_m=\begin{pmatrix}
    &&&&i\\
    &&&-i&\\
    &&\iddots\\
    &i\\
    -i
\end{pmatrix}\,(m\text{ odd}).$$

For $\zeta\in\C$ and $m\geq0$, consider the Jordan block of size $m+1$ \begin{equation}\label{eqn:jordanblock}
    A_{\zeta,m}:=\begin{pmatrix}
        \zeta & 1 \\
        & \ddots & 1 \\
        & & \zeta
    \end{pmatrix}\in\End(\C^{m+1}).
\end{equation}

For $\zeta$ imaginary, this gives a skew-Hermitian endomorphism; set $$\Delta_m^\pm(\zeta):=( A_{\zeta,m},\C^{m+1},\pm h_m).$$

For $\zeta$ not imaginary, we can obtain a skew-Hermitian endomorphism by pairing two Jordan blocks; set $$\Delta_m(\zeta,-\overline\zeta):=\bigl( A_{\zeta,m}\oplus A_{-\overline \zeta,m},\C^{m+1}\oplus\C^{m+1},h_{2m+1}\bigr).$$

We will say that $(A,V,h)$ is a \emph{block} if it coincides with some $\Delta_m^\pm(\zeta)$ or $\Delta_m(\zeta,-\overline\zeta)$; by construction, a block $\Delta$ consists of a vector space, a Hermitian form and a skew-Hermitian endomorphism; as a matter of notation, we will denote the three objects as $V(\Delta)$, $h(\Delta)$ and $A(\Delta)$. Since $\Delta_m(\zeta,-\overline\zeta)$ and $\Delta_m(-\overline\zeta,\zeta)$ are clearly equivalent, we will assume without loss of generality that $\Re\zeta>0$ in blocks of type $\Delta_m(\zeta,-\overline\zeta)$. With this assumption, it is clear that the blocks $\Delta_m^\pm(\zeta)$ and $\Delta_m(\zeta,-\overline\zeta)$ are pairwise non-equivalent.

\begin{theorem}[\cite{Burgoyne_Cushman_1977}]\label{thm:orbits_U(p,q)}
    Let $V$ be a complex vector space endowed with a Hermitian form $h$ and let $A\colon V\to V$ be skew-Hermitian relative to $h$. Then $V=W_1\oplus\cdots\oplus W_k$, where the sum is orthogonal, each $W_i$ is $A$-invariant and $(A|_{W_i},W_i, h|_{W_i})$ is equivalent to some $\Delta_m^\pm(\zeta)$, $\zeta\in i\R$, or $\Delta_m(\zeta,-\overline\zeta)$, $\Re\zeta>0$. The decomposition is unique up to reordering.
\end{theorem}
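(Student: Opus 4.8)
The plan is to follow the classical strategy for putting a pair consisting of a nondegenerate sesquilinear form and a compatible operator into canonical form: first decompose $V$ along generalized eigenspaces of $A$, then within each piece peel off $h$-nondegenerate cyclic summands, and finally identify the resulting invariants to get uniqueness.

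\textbf{Step 1: generalized eigenspace decomposition.} Since $A$ is skew-Hermitian, $h(Ax,y)=-h(x,Ay)$; a standard computation then shows that the generalized eigenspaces $V_\zeta:=\ker(A-\zeta\,\Id)^{\dim V}$ satisfy $h(V_\zeta,V_\eta)=0$ unless $\eta=-\overline\zeta$. Hence $V$ is the orthogonal direct sum of the pieces $V_\zeta$ for $\zeta\in i\R$ (each $A$-invariant and, being paired only with itself, $h$-nondegenerate) and the pieces $V_\zeta\oplus V_{-\overline\zeta}$ for $\Re\zeta>0$ (on which $h$ restricts nondegenerately, with $V_\zeta$ and $V_{-\overline\zeta}$ isotropic and perfectly paired with each other). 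This reduces the existence statement to the two cases below.

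\textbf{Step 2: the imaginary case.} On $V=V_\zeta$ the operator $N:=A-\zeta\,\Id$ is nilpotent and skew-Hermitian with respect to the nondegenerate form $h$. The crux is the following lemma: if $N^m\neq0$ and $N^{m+1}=0$, then there exists $x$ with $N^m x\neq0$ and $h(N^m x,x)\neq0$. To see this, note $N^m$ induces an isomorphism $V/\ker N^m\xrightarrow{\sim}\Image N^m$, and the sesquilinear form $(x,y)\mapsto h(N^m x,y)$ descends to a \emph{nondegenerate} form on $V/\ker N^m$; since $\overline{h(N^mx,y)}=(-1)^m h(N^m y,x)$, multiplying by $1$ or by $i$ according to the parity of $m$ turns it into a nondegenerate Hermitian form, which therefore admits a vector of nonzero norm. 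Granting the lemma, the cyclic subspace $C=\Span{x,Nx,\dots,N^m x}$ is $A$-invariant, and because $h(N^ix,N^jx)=(-1)^i h(N^{i+j}x,x)$ its Gram matrix is anti-triangular with nonzero anti-diagonal, hence $h|_C$ is nondegenerate. After rescaling $x$ so that $h(N^m x,x)$ matches the relevant entry of $\pm h_m$, the summand $C$ is exactly a block $\Delta_m^\pm(\zeta)$, the sign being whichever value is realized. One then passes to $C^\perp$ and induces on $\dim V$.

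\textbf{Step 3: the non-imaginary pair and uniqueness.} On $V_\zeta$ let $N:=A-\zeta\,\Id$; the perfect pairing $h\colon V_\zeta\times V_{-\overline\zeta}\to\C$ intertwines $N$ with an operator on $V_{-\overline\zeta}$ conjugate to $N$, so the two Jordan types agree. Picking a Jordan basis $\{N^i x_s\}$ of $V_\zeta$ and the $h$-dual basis of $V_{-\overline\zeta}$, and rescaling the dual vectors to normalize $h$ to $h_{2m+1}$ on each cyclic pair, exhibits summands $\Delta_m(\zeta,-\overline\zeta)$; no sign ambiguity occurs, the relevant form being hyperbolic. For uniqueness, observe that the block multiplicities are unitary invariants: the numbers $\dim\ker(A-\zeta\,\Id)^k$ recover the Jordan type (with $\zeta$ and $-\overline\zeta$ paired when $\Re\zeta\neq0$), and for $\zeta\in i\R$ the signature of the Hermitian form obtained by normalizing $(x,y)\mapsto h((A-\zeta\,\Id)^{k-1}x,y)$ on $\ker(A-\zeta\,\Id)^k/\ker(A-\zeta\,\Id)^{k-1}$ counts $\#\Delta_{k-1}^+(\zeta)-\#\Delta_{k-1}^-(\zeta)$. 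These quantities are preserved by equivalence and pin down the decomposition up to reordering.

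The main obstacle is the lemma in Step 2 — guaranteeing that a cyclic generator can be chosen \emph{at the top of the nilpotency filtration} with $h(N^m x,x)\neq0$, so that the cyclic block it generates is $h$-nondegenerate and splits off cleanly; everything else is bookkeeping with Jordan bases and the anti-diagonal structure of $h_m$. (This is of course the content of \cite{Burgoyne_Cushman_1977}, which we are only recalling here.)
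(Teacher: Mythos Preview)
The paper does not give a proof of this theorem; it is simply quoted from \cite{Burgoyne_Cushman_1977} and used as a black box, so there is nothing in the paper to compare your argument against. Your sketch follows the standard route and is essentially correct, but two points deserve tightening.

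In Step~2, after your lemma produces $x$ with $h(N^mx,x)\neq0$, rescaling $x$ alone does not make $(C,h|_C,N|_C)$ match $\Delta_m^\pm(\zeta)$. The Gram matrix in the basis $x,Nx,\dots,N^mx$ is indeed anti-triangular with nonzero anti-diagonal, but the entries $\alpha_j:=h(N^jx,x)$ for $j<m$ need not vanish, whereas in $\pm h_m$ only the anti-diagonal survives. One must also adjust $x\mapsto x+cN^{m-j}x$ and induct downward on $j$: this leaves $\alpha_m$ and the already-killed $\alpha_l$ ($l>j$) untouched, and replaces $\alpha_j$ by $\alpha_j+(c+(-1)^{m-j}\overline c)\alpha_m$; a parity check shows this can always be made zero. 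This is routine but not covered by ``rescaling''.

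In the uniqueness paragraph, the form $(x,y)\mapsto h(N^{k-1}x,y)$ on $\ker N^k/\ker N^{k-1}$ is in general degenerate: a block of size strictly larger than $k$ contributes a null line. What is true is that its radical has dimension equal to the number of blocks of size $>k$, and the signature of the induced Hermitian form on the quotient by the radical is exactly $\#\Delta_{k-1}^+(\zeta)-\#\Delta_{k-1}^-(\zeta)$. Together with the kernel dimensions this still pins down the multiplicities, so your conclusion stands once the statement is corrected.
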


This result gives a classification of skew-Hermitian endomorphisms of $\C^n$ up to unitary transformations, or equivalently of adjoint orbits in $\lie{u}(p,q)$. Rather than use this result directly, we will introduce a language that provides a method to construct an explicit list of skew-Hermitian endomorphism, one in each orbit.\medskip

Consider the abelian semigroup $\mathcal{U}$ freely generated by blocks. Every element $t\in\mathcal{U}$ determines a vector space $V(t)$, a Hermitian form $h(t)$ and a skew-Hermitian endomorphism $A(t)$, where $$V(a_1\Delta_1+\cdots+a_k\Delta_k):=\underbrace{V(\Delta_1)\oplus \dots \oplus V(\Delta_1)}_{a_1}\oplus \dots \oplus \underbrace{V(\Delta_k)\oplus \dots \oplus V(\Delta_k)}_{a_k},$$ and $h(a_1\Delta_1+\cdots+a_k\Delta_k)$, $A(a_1\Delta_1+\cdots+a_k\Delta_k)$ are defined in the obvious way. In the following sections, we will think of $V(t)$ as a real vector space endowed with an almost pseudo-Hermitian structure $(J(t),g(t))$, where $J(t)$ is given by multiplication by $i$ and $g(t)$ is the real part of $h(t)$.

\begin{lemma}\label{lemma:changesign}
    Let $r\colon \mathcal{U}\to\mathcal{U}$ be the additive map defined on generators by $$r(\Delta^{\varepsilon}_m(\zeta))=\Delta^{\varepsilon'}_m(-\zeta),\quad r(\Delta_m(\zeta,-\overline\zeta))=\Delta_m(\overline\zeta,-\zeta),$$ where $\varepsilon'=(-1)^m\varepsilon$. Then for any $t\in\mathcal{U}$, $(-A(t),V(t),h(t))$ is equivalent to $(A(r(t)),V(r(t)),h(r(t)))$.
\end{lemma}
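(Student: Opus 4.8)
The plan is to verify the equivalence block by block, since $r$ is additive and the claimed equivalence respects orthogonal direct sums: if $(-A(\Delta_i), V(\Delta_i), h(\Delta_i))$ is equivalent to $(A(r(\Delta_i)), V(r(\Delta_i)), h(r(\Delta_i)))$ for each generating block $\Delta_i$, then taking the direct sum of the intertwining isomorphisms gives the equivalence for an arbitrary $t = \sum a_i \Delta_i$. So it suffices to treat the three cases $\Delta = \Delta_m^+(\zeta)$, $\Delta = \Delta_m^-(\zeta)$ with $\zeta \in i\R$, and $\Delta = \Delta_m(\zeta, -\overline\zeta)$ with $\Re\zeta > 0$.

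For the paired case $\Delta_m(\zeta,-\overline\zeta)$, the endomorphism is $A_{\zeta,m} \oplus A_{-\overline\zeta,m}$ on $\C^{m+1}\oplus\C^{m+1}$ with form $h_{2m+1}$. Negating gives $-A_{\zeta,m}\oplus -A_{-\overline\zeta,m} = A_{-\zeta,m}\oplus A_{\overline\zeta,m}$, and after swapping the two summands (which is an isometry of $h_{2m+1}$, up to checking the anti-diagonal form is preserved by the appropriate permutation) this is exactly $A(r(\Delta_m(\zeta,-\overline\zeta))) = A(\Delta_m(\overline\zeta,-\zeta))$. One should be slightly careful that the swap of the two $\C^{m+1}$ factors is compatible with the anti-diagonal Hermitian form $h_{2m+1}$; this comes down to an elementary permutation-matrix computation.

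For the imaginary case $\Delta_m^\pm(\zeta)$ with $\zeta = i\mu$, $\mu\in\R$: here $-A_{\zeta,m}$ is again a Jordan block but with eigenvalue $-\zeta$ and with $-1$'s on the superdiagonal rather than $+1$'s. The conjugating isomorphism $\phi$ should be a diagonal matrix with entries $\pm 1$ alternating, chosen so that $\phi^{-1}(-A_{\zeta,m})\phi = A_{-\zeta,m}$ — explicitly $\phi = \operatorname{diag}(1,-1,1,-1,\dots)$ flips the sign of every off-diagonal $1$. The remaining point is the effect of this $\phi$ on the form $h_m$: since $h_m$ is (up to a unit) the anti-diagonal matrix, conjugating by an alternating-sign diagonal matrix multiplies $h_m$ by $(-1)^m$ (the product of the two signs at anti-diagonal positions $(j, m+2-j)$), which is precisely why the sign changes from $\varepsilon$ to $\varepsilon' = (-1)^m\varepsilon$ in the statement. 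This sign bookkeeping — tracking how the diagonal conjugation interacts with the anti-diagonal Hermitian form, and checking the even/odd $m$ cases of $h_m$ separately — is the one place requiring genuine care, though it is still only a short explicit calculation; everything else is formal.
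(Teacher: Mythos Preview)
Your approach is essentially identical to the paper's: reduce to single blocks, use the alternating-sign diagonal matrix $\phi=\operatorname{diag}(1,-1,1,\dots)$ to convert $-A_{\zeta,m}$ into $A_{-\zeta,m}$, and track the effect on $h_m$ to pick up the factor $(-1)^m$. The imaginary-eigenvalue case is handled correctly and matches the paper exactly.

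There is, however, a genuine slip in your treatment of the paired block $\Delta_m(\zeta,-\overline\zeta)$: you write $-A_{\zeta,m}\oplus -A_{-\overline\zeta,m} = A_{-\zeta,m}\oplus A_{\overline\zeta,m}$, which is false as an equality of matrices---the superdiagonal entries become $-1$, not $+1$, exactly as you observe a paragraph later for the imaginary case. You must apply the same diagonal conjugation $\phi$ on each $\C^{m+1}$ summand here too, and then verify that $\phi\oplus\phi$ (together with your swap of the two summands) transforms $h_{2m+1}$ back to itself. The paper handles this by simply saying ``we apply the same argument to each block''; once you do likewise, the paired case goes through with the same bookkeeping you outlined for $\Delta_m^\pm(\zeta)$.
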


\begin{proof}
    For a block of type $\Delta=\Delta^{\pm}_m(\zeta)$, we see that $$-A_{\zeta,m}=KA_{-\zeta,m}K^{-1},$$ where $K$ is the diagonal matrix with $(-1)^{i}$ at position $(i,i)$. Since $Kh_m\tran K=(-1)^{m}h_m$, we conclude that $$r(\Delta^{\pm}_m(\zeta))=\begin{cases}
        \Delta^{\pm}_m(-\zeta)&\text{if }m\text{ even},\\
        \Delta^{\mp}_m(-\zeta)&\text{if }m\text{ odd}.\\
    \end{cases}$$

    For a block of type $\Delta_m(\zeta,-\overline\zeta)$, we apply the same argument to each block, and obtain that changing the sign of $A$ amounts to replacing $\zeta$ with $-\zeta$; interchanging the two blocks in order to maintain the condition $\Re\zeta>0$, we obtain $r(\Delta_m(\zeta,-\overline\zeta))=\Delta_m(\overline\zeta,-\zeta)$.
\end{proof}

Now consider the additive map \begin{equation}\label{eq:signature_map}
    s\colon \mathcal U\to\N\oplus\N
\end{equation} associating to every block the signature of the Hermitian form; to be explicit: $$s(\Delta_{2k-1}^\varepsilon(\zeta))=(k,k),\quad s(\Delta_{2k}^\varepsilon(\zeta))=\begin{cases}
    (k+1,k)&\mbox{if }(-1)^k=+\varepsilon\\
    (k,k+1)&\mbox{if }(-1)^k=-\varepsilon\\
\end{cases},\quad s(\Delta_m(\zeta,-\overline\zeta))=(m+1,m+1),$$ where $\varepsilon\in\{-1,+1\}$. Then every $\mathrm{U}(p,q)$-orbit in $\lie u(p,q)$ can be identified with an element $t\in\mathcal{U}$ with $s(t)=(p,q)$. We will set $$\mathcal{U}_{p,q}:=s^{-1}(p,q).$$

In the classification of next section we will encounter a symmetry of the type $v\mapsto v+A(t)u$ (see Lemma \ref{lemma:orbitsarbitrarydimension}), where $t\in\mathcal{U}_{p,q}$, $u,v\in V(t)$; accounting for this symmetry, the classification will require fixing a complement $W$ of $\Im A(t)$ in $V(t)$ and an element in $W$ in each $\mathrm{U}(p,q)$-orbit. To that end, for each $\Delta^\pm_m(0)$, we declare $v(\Delta^\pm_m(0))$ to be the last element in the standard basis of $V(\Delta^\pm_m(0))=\C^{m+1}$, so that $$V(\Delta^\pm_m(0))=\SpanC{A^k(v(\Delta^\pm_m(0)))\mid k=0,\dotsc,m}_.$$ If $t=n\Delta$ and $\Delta$ is a nilpotent block, we view the vector $v(\Delta)$ as an element of $V(t)$ by $$v(\Delta)=\bigl(v(\Delta),0, \dotsc, 0\bigr)\in\underbrace{V(\Delta)\oplus \dots \oplus V(\Delta)}_{n}.$$

Fix a type $t$ in $\mathcal{U}$. Let $N_t$ be the set of nilpotent blocks appearing in $t$, i.e.\ $$N_t:=\{\Delta=\Delta^{\varepsilon}_m(0)\mid m\geq0, \varepsilon=\pm1, t-\Delta\in\mathcal{U}\}.$$

Let $X_t$ denote the set of maps from $N_t$ to nonnegative real numbers $\R_{\geq0}$ with the property that for all $m$ \begin{equation}\label{eq:condition_Xt}
    x(\Delta_m^+(0))x(\Delta_m^-(0))=0\quad\text{or}\quad x(\Delta_m^+(0))=1=x(\Delta_m^-(0)),
\end{equation} where $x(\Delta)$ is taken to be $0$ when $\Delta$ is not in $N_t$. Notice that $X_t$ does not depend on the multiplicities of the blocks. For instance, if $t=a\Delta_0^+(0)+b\Delta_0^-(0)$ with $a,b$ positive, then an element of $X_t$ can be identified with a pair $(x_0^+, x_0^-)\in(\R_{\geq0})^2$, irrespective of the values of $a,b$, and either $x_0^+$ is zero, $x_0^-$ is zero, or $x_0^+=1=x_0^-$. Every element of $V(t)$ is in the same $\mathrm{U}(a,b)$-orbit as an element $x_0^+v(\Delta_0^+(0))+x_0^-v(\Delta_0^-(0))$ with $(x_0^+,x_0^-)$ in $X_t$, motivating the restriction of \eqref{eq:condition_Xt}.

\begin{lemma}\label{lemma:parametrizecomplement}
    Suppose that $t\in\mathcal{U}_{p,q}$ contains a nilpotent block, and let $H$ be the stabilizer of $A(t)$ in $\mathrm{U}(p,q)$. Then there is a complement $W$ of $\Im A(t)$ in $V(t)$ such that every element in $W$ is in the same $H$-orbit as a uniquely determined vector \begin{equation}\label{eqn:vofx}
        v(x)=\sum_{\Delta\in N_t} x(\Delta)v(\Delta),
    \end{equation} where $x\in X_t$.
\end{lemma}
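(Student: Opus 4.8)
The plan is to reduce the statement to a concrete normal-form problem for the action of the centralizer $H$ of the nilpotent part of $A(t)$ on a chosen complement $W$ of $\Im A(t)$. First I would choose $W$ explicitly: by Theorem \ref{thm:orbits_U(p,q)} we may assume $A(t)$ is already in the block normal form, so $V(t)$ decomposes orthogonally into (i) a sum of nilpotent blocks $\Delta^\varepsilon_m(0)$ and (ii) a sum of blocks with $A$ invertible on them (the semisimple blocks $\Delta^\pm_m(\zeta)$ with $\zeta\in i\R\setminus\{0\}$ and the paired blocks $\Delta_m(\zeta,-\overline\zeta)$). On the invertible part $A(t)$ is surjective, so $\Im A(t)$ already contains it; thus a complement $W$ need only live inside the nilpotent part. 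For a single nilpotent block $\Delta^\varepsilon_m(0)=(A_{0,m},\C^{m+1},\pm h_m)$, the image of $A_{0,m}$ is the span of the first $m$ standard basis vectors, so a one-dimensional complement is spanned by the last basis vector $v(\Delta^\varepsilon_m(0))$; taking the direct sum over all nilpotent blocks (inserting a zero in the other summands when a block appears with multiplicity, as in the paragraph before the lemma) gives a complement $W$ of $\Im A(t)$ in $V(t)$, with $W$ spanned by the vectors $v(\Delta)$, $\Delta\in N_t$. So the candidate normal forms $v(x)=\sum_{\Delta\in N_t}x(\Delta)v(\Delta)$ indeed lie in $W$.

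The heart of the argument is then: every element of $W$ lies in the same $H$-orbit as exactly one $v(x)$ with $x\in X_t$. For the \emph{existence} part, I would first note that $H$ contains, for each nilpotent block summand $\Delta^\varepsilon_m(0)$, the full group $\mathrm{U}(1)$ acting by scalars $\lambda$ with $|\lambda|=1$ on that summand commuting with $A_{0,m}$ (more precisely the unitary part of the centralizer of a single Jordan block is generated by such scalars together with unipotent elements $p(A_{0,m})$), and it also contains the full unitary group mixing equal blocks $\Delta^\varepsilon_m(0)$ of the same sign $\varepsilon$ and size $m$. Hence given $w\in W$, writing $w=\sum y_\Delta v(\Delta)$ with $y_\Delta\in\C$: using the $\mathrm{U}(k)$'s that permute/rotate among the $k$ copies of a fixed block type we may assume at most one copy of each block type is involved; using the $\mathrm{U}(1)$ scalars on each block we may rotate each surviving coefficient to a nonnegative real; and using the isometries that interchange a $\Delta^+_m(0)$ summand with a $\Delta^-_m(0)$ summand of the same size — which exist inside $\mathrm{U}(p,q)$ precisely on the neutral-signature subspace $V(\Delta^+_m(0))\oplus V(\Delta^-_m(0))$ — one can trade coefficients between the two signs, which is exactly what forces the normalization \eqref{eq:condition_Xt}: either we can kill one of $x(\Delta^+_m(0)), x(\Delta^-_m(0))$ entirely, or (when both the plus- and minus-coefficients would have to survive) they can be equalized to $1$. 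Carrying this bookkeeping out block-size by block-size produces $x\in X_t$ with $w\in H\cdot v(x)$.

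For \emph{uniqueness}, I would argue that the $H$-orbit of $v(x)$ determines $x$. The key invariant is the way $v(x)$, together with its $A(t)$-translates, pairs under $h(t)$: since $A(t)$ is skew-Hermitian, the Hermitian pairings $h(t)(A(t)^i v, A(t)^j v)$ are $H$-invariant, and because the block structure of $A(t)$ is rigid, these pairings read off, for each size $m$, the value $x(\Delta^+_m(0))^2 - x(\Delta^-_m(0))^2$ (a real number) together with, in the degenerate case $x(\Delta^+_m(0))=x(\Delta^-_m(0))$, the null-pairing data; more precisely the $H$-orbit of a vector in $W$ is detected by the isomorphism type of the $A(t)$-cyclic subspace it generates together with the restriction of $h(t)$ to that subspace, and condition \eqref{eq:condition_Xt} is exactly a set of representatives for those isometry types. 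I would make this precise using the uniqueness clause of Theorem \ref{thm:orbits_U(p,q)} applied to the pair $(A(t), \text{cyclic subspace of } v)$. The main obstacle, and where I would spend the most care, is this last point: disentangling the two sub-cases of \eqref{eq:condition_Xt} and checking that no further $H$-identifications collapse distinct $x$'s — in particular verifying that when one of the two signed coefficients vanishes there is genuinely no isometry of the relevant (now \emph{definite}, not neutral) subspace scaling the surviving coefficient, whereas in the balanced case the value is pinned to $1$ by the requirement that the generated subspace be a fixed copy of the neutral-signature model. The rest (choosing $W$, the reduction moves) is routine linear algebra once the centralizer $H$ of a nilpotent skew-Hermitian endomorphism is described explicitly.
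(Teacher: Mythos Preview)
Your approach is essentially the same as the paper's. You choose the same complement $W$ (spanned by the last basis vector of each nilpotent block), and for existence you use the same subgroups of $H$: the $\mathrm{U}(k)$ mixing identical copies of a block, the $\mathrm{U}(1)$ scalars on each block, and a $\mathrm{U}(1,1)$-type action on a pair $\Delta^+_m(0)\oplus\Delta^-_m(0)$. The paper makes the last step crisper than you do: it simply observes that the $\mathrm{U}(1,1)$-orbits on $\C^2$ have representatives $(x,0)$, $(0,y)$, $(1,1)$ with $x,y\geq0$, which is exactly condition~\eqref{eq:condition_Xt}, rather than speaking vaguely of ``trading coefficients''. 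For uniqueness the paper gives only a one-line appeal to $H$ preserving generalized eigenspaces, whereas you propose finer $H$-invariants (the pairings $h(A^iv,A^jv)$ and the isometry type of the cyclic subspace); your argument is more explicit here, and in fact needed, since all nilpotent blocks sit in the single generalized $0$-eigenspace. Both treatments leave some details to the reader, but the overall strategy is the same.
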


\begin{proof}
    Let $W$ be the complex subspace of $V(t)$ spanned by $v(\Delta)$, as $\Delta$ ranges among nilpotent blocks in $t$. Since $A_{\zeta,m}$ is invertible for $\zeta\neq0$, $W$ is a complement of $\Im A(t)$.\medskip

    If $t=n\Delta$, then $W(t)$ is spanned by $$(v(\Delta),0,\dotsc, 0),\dotsc, (0,\dotsc, 0,v(\Delta));$$ since  $\mathrm{U}(n)$ acts on $V(n\Delta)\cong\C^n\otimes_\C V(\Delta)$ by unitary transformations that preserve $A(t)$, every element of $W(t)$ is in the same $H$-orbit as some $xv(n\Delta)$, with $x\in\C$. Moreover, one can multiply $v=v(\Delta^\pm_m(0))$ by an element of $\mathrm{U}(1)$ with a change of the unitary basis, though not rescale it, since $\abs{h(A^{m}(v),v)}=1$. Thus, we can assume that $x$ is a nonnegative real number.\medskip

    Similarly, if $t=\Delta^+_m(0)+\Delta^-_m(0)$, then $\mathrm{U}(1,1)$ acts on $V(\Delta^+_m(0))\oplus V(\Delta^-_m(0))$ by preserving $A(t)$. Since every element of $\C^2$ is in the same $\mathrm{U}(1,1)$-orbit as an element of the form $$\begin{pmatrix} x \\ 0 \end{pmatrix}, \begin{pmatrix} 0 \\ y \end{pmatrix}, \begin{pmatrix} 1\\ 1 \end{pmatrix}$$ with $x,y$ nonnegative real numbers, we see that every element of $W(t)$ is in the same $H$-orbit as $xv(\Delta^+_m(0))$, $yv(\Delta^-_m(0))$ or $v(\Delta^+_m(0))+v(\Delta^-_m(0))$.\medskip

    The general case is obtained by combining the two arguments. Uniqueness follows from the fact that $H$ preserves generalized eigenspaces of $A(t)$.
\end{proof}

\subsection{Classification in arbitrary dimensions}\label{section:classification_every_dim}

Here we use the language of blocks introduced in Subsection \ref{section:orbits} to classify the possible derivations $D$ in the construction of almost abelian pseudo-Kähler Lie algebras.\medskip

We begin by introducing seven families of Lie algebras $\g_0,\dotsc,\g_6$:

\begin{itemize}
    \item Given $t\in\mathcal{U}_{p,q}$, $a\in\R$, $\varepsilon=\pm1$, $n=p+q+1$, set $$D_0(t,a)=\begin{pmatrix}
        A(t)&0\\
        0&a
    \end{pmatrix}\in\End(\frh), \quad \frh=V(t)\oplus\Span{e_{2n-1}}\cong\R^{2n-1}.$$

    By Proposition~\ref{prop:characterization_non-isotropic_case}, the almost abelian Lie algebra $$\g_0(t,a,\varepsilon):=\frh\rtimes_{D_0(t,a)}\Span{e_{2n}}$$ has a pseudo-Kähler structure $$J=J(t)+e^{2n-1}\otimes e_{2n}-e^{2n}\otimes e_{2n-1}, \quad g=g(t)+\varepsilon (e^{2n-1}\otimes e^{2n-1}+e^{2n}\otimes e^{2n}).$$
    \item Given $t\in\mathcal{U}_{p,q}$, $n=p+q+2$, set $$\frh=\Span{e_1,e_2}\oplus V(t) \oplus \Span{e_{2n-1}}\cong\R^{2n-1}.$$

    Consider six families of endomorphisms of $\frh$,\begin{gather*}
    D_1(t,c_2)=\begin{smallpmatrix}
        -1 & 0 & 0 & 0\\
        0 & -1 & 0 & c_2\\
        0 & 0& A(t) & 0\\
        0 & 0 & 0 & 1
    \end{smallpmatrix},\quad D_2(t,x)=\begin{smallpmatrix}
        0 & 0 & -(J_Vv(x))^{\flat_V} & 0\\
        0 & 0 & v(x)^{\flat_V} & 0\\
        0 & 0& A(t) & v(x)\\
        0 & 0 & 0 & 0
    \end{smallpmatrix},\quad D_3(t,x)=\begin{smallpmatrix}
        0 & 0 & -(J_Vv(x))^{\flat_V} & 0\\
        0 & 0 & v(x)^{\flat_V} & 1\\
        0 & 0& A(t) & v(x)\\
        0 & 0 & 0 & 0
    \end{smallpmatrix},\\
    D_4(t,c_1)=\begin{smallpmatrix}
        0 & 0 & 0 & c_1\\
        0 & 0 & 0 & 1\\
        0 & 0& A(t) & 0\\
        0 & 0 & 0 & 0
    \end{smallpmatrix}, \quad D_5(t)=\begin{smallpmatrix}
        0 & 0 & 0 & 1\\
        0 & 0 & 0 & 0\\
        0 & 0& A(t) & 0\\
        0 & 0 & 0 & 0
    \end{smallpmatrix},\quad D_6(t)=\begin{smallpmatrix}
        0 & 0 & 0 & 0\\
        0 & 0 & 0 & 0\\
        0 & 0& A(t) & 0\\
        0 & 0 & 0 & 0
    \end{smallpmatrix},
    \end{gather*} where $c_1,c_2$ are real numbers and $x$ a nonzero element of $X_t$. By Proposition~\ref{prop:characterization_isotropic_case}, for any $D_i(t,\dots)$ in one of these families we have an almost abelian Lie algebra $$\g_i(t,\dots):=\frh\rtimes_{D_i(t,\dots)}\Span{e_{2n}}$$ with a pseudo-Kähler structure given by $$J=e^1\otimes e_2-e^2\otimes e_1+J(t)+e^{2n-1}\otimes e_{2n}-e^{2n}\otimes e_{2n-1}, \quad g=e^1\odot e^{2n}-e^2\odot e^{2n-1}+g(t).$$
\end{itemize}

By construction, each Lie algebra in the families $\g_0,\dotsc, \g_6$ is endowed with a fixed pseudo-Kähler structure.

\begin{theorem}\label{thm:classification}
    Up to unitary Lie algebra isomorphisms, every almost abelian pseudo-Kähler Lie algebra belongs to one of the families $\g_0,\dotsc, \g_6$. Given two distinct Lie algebras in any two families $\g_i$, $\g_j$, a unitary Lie algebra isomorphism between them exists exactly in the following cases: \begin{itemize}
        \item between the abelian Lie algebras in the families $\g_0$ and $\g_6$;
        \item between $\g_0(t,a,\varepsilon)$ and $\g_0(r(t),-a,\varepsilon)$;
        \item between $\g_2(t,x)$ and $\g_2(t',x')$ if $A(t)=kA(t')$ and $x=k^2x'$ for some $k\in\R\setminus\{0\}$;
        \item between $\g_6(t)$ and $\g_6(t')$ if $A(t)=kA(t')$ for some $k\in\R\setminus\{0\}$.
    \end{itemize}
\end{theorem}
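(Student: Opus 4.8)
The plan is to prove the classification in two stages: first show that every almost abelian pseudo-Kähler Lie algebra is unitarily isomorphic to some $\g_i(t,\dots)$, and then determine exactly when two members of the list are unitarily isomorphic. For the first stage, I would split into the non-isotropic and isotropic cases using the dichotomy established in Section~\ref{section:characterization}. In the non-isotropic case, Theorem~\ref{thm:characterization_non-isotropic_case} tells us that $D=\operatorname{diag}(A,a)$ with $A\in\mathfrak{u}(\frh_1,J_1,g_1)$; applying Theorem~\ref{thm:orbits_U(p,q)} (in the guise of the semigroup $\mathcal{U}$) we may conjugate $A$ by a unitary transformation of $\frh_1$ so that it equals $A(t)$ for a unique $t\in\mathcal{U}_{p,q}$ with $2(p+q)=\dim\frh_1$, and the signature of $g_1$ is recorded by $s(t)$; extending this unitary by the identity on $\frh_0$ gives a unitary isomorphism onto $\g_0(t,a,\varepsilon)$, where $\varepsilon$ is the sign of $g|_{\frh_0}$. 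This takes care of $\g_0$.

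For the isotropic case I would start from the normal form \eqref{eq:D_zero_g0} of Theorem~\ref{thm:characterization_isotropic_case}, with data $(A,v,a,c_1,c_2)$, $A\in\mathfrak{u}(V,J_V,g_V)$, $v\in V$. The group of unitary automorphisms preserving the filtration $\lie h^\perp\subset\lie h_0\subset\lie h_0^\perp\subset\frh\subset\g$ acts on this data, and the key is to organize the reduction hierarchically. First use $\mathrm{U}(V,J_V,g_V)$ to bring $A$ to the canonical block form $A(t)$, $t\in\mathcal{U}_{p,q}$ with $2(p+q)=\dim V$. Next observe (this is the content of a lemma the paper calls Lemma~\ref{lemma:orbitsarbitrarydimension}) that residual unitary freedom produces a symmetry $v\mapsto v+A(t)u$ together with the action of the stabilizer $H$ of $A(t)$ in $\mathrm{U}(p,q)$; by Lemma~\ref{lemma:parametrizecomplement}, after modifying $v$ by something in $\Im A(t)$ we may assume $v=v(x)$ for a unique $x\in X_t$ (when $t$ contains a nilpotent block; otherwise $\Im A(t)=V(t)$ and $v$ can be killed entirely). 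Simultaneously one must track how rescaling in the hyperbolic plane $\lie h_0$ and shears along the filtration act on the remaining scalars $a,c_1,c_2$: a rescaling $e_{2n}\mapsto \lambda e_{2n}$, $e_1\mapsto\lambda^{-1}e_1$ (to preserve $g(e_1,e_{2n})=1$) multiplies $a$ by $\lambda$, $v$ by $\lambda$, and $c_i$ by $\lambda^2$ (roughly), while adding multiples of $e_1,e_2$ to $e_{2n}$ adjusts $c_1,c_2$. Case analysis on whether $a=0$ or $a\ne 0$, and whether $v=0$ or $v\ne0$, and the behaviour of $c_1,c_2$ under these symmetries, then pins the structure down to exactly one of $\g_1,\dots,\g_6$: $a\ne0$ gives $\g_1$ (after normalizing $a=1$, $c_1=0$, and only $c_2$ surviving); $a=0$, $v\ne0$ gives $\g_2$ or $\g_3$ depending on whether $c_2$ can be normalized to $0$ or $1$; $a=0$, $v=0$ gives $\g_4$, $\g_5$, or $\g_6$ depending on $(c_1,c_2)$.

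For the second stage — deciding when two normal forms are unitarily isomorphic — I would argue that any unitary isomorphism between two of these Lie algebras must respect the canonically defined subspaces, because $\frh=[\g,\g]+$ (something intrinsic) and $\frh_0=\frh^\perp\oplus J\frh^\perp$ are determined by $(\g,J,g)$, so isotropic cannot map to non-isotropic; this immediately rules out cross-family isomorphisms except between abelian algebras, which accounts for the $\g_0$–$\g_6$ coincidence (both abelian when $A=0$, $a=0$ and all scalars vanish). Within $\g_0$, an isomorphism must restrict to a unitary map $\frh_1\to\frh_1'$ intertwining $A(t)$ with $A(t')$, forcing $t=t'$ by uniqueness in Theorem~\ref{thm:orbits_U(p,q)}, unless it sends $e_{2n}\mapsto -e_{2n}$ (the only other unitary automorphism of the line $\R e_{2n}$), which by Lemma~\ref{lemma:changesign} turns $A(t)$ into $A(r(t))$ and $a$ into $-a$ — giving the $\g_0(t,a,\varepsilon)\cong\g_0(r(t),-a,\varepsilon)$ relation. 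Within the isotropic families, the residual scaling $\lambda$ on $\lie h_0$ survives as an isomorphism only when it acts trivially on the surviving invariants: for $\g_6(t)$, $A(t)$ scales by $\lambda$ (with $\lambda=k$) with nothing else to constrain it, giving $\g_6(t)\cong\g_6(t')$ iff $A(t)=kA(t')$; for $\g_2(t,x)$, $A(t)$ scales by $k$ and $v(x)$ scales by $k$ so $x$ scales by $k^2$, giving the stated relation; whereas in $\g_1$ the normalization $a=1$ already consumed the scaling freedom (only $k>0$ would preserve $a=1$, but then $\lambda=1$), and in $\g_3,\g_4,\g_5$ the normalization of $c_i$ to $1$ does likewise, so no further identifications occur.

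The main obstacle I expect is bookkeeping the residual symmetry group carefully enough: after fixing $A=A(t)$ one must identify precisely which unitary changes of basis preserve both the pseudo-Hermitian structure \eqref{eqn:almosthermitianstructurezero} and the block form of $A$, compute their (affine) action on the tuple $(v,a,c_1,c_2)$, and verify that the orbit representatives are exactly $\g_1,\dots,\g_6$ with the claimed remaining parameters and no hidden coincidences. In particular the interaction between the $v\mapsto v+A(t)u$ shear, the stabilizer-$H$ action on $v$ (handled abstractly by Lemmas~\ref{lemma:orbitsarbitrarydimension} and~\ref{lemma:parametrizecomplement}), and the hyperbolic-plane rescaling that simultaneously touches $v$, $a$, and the $c_i$ is the delicate point; getting the exponents of $\lambda$ right is what produces the asymmetric-looking condition $x=k^2x'$ in the $\g_2$ case, and an error there would either merge or fail to merge isomorphism classes.
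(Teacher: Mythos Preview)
Your overall plan matches the paper's proof closely: split into non-isotropic and isotropic cases via Theorems~\ref{thm:characterization_non-isotropic_case} and~\ref{thm:characterization_isotropic_case}, normalize $A$ using Theorem~\ref{thm:orbits_U(p,q)}, and in the isotropic case use the stabilizer computation of Lemma~\ref{lemma:orbitsarbitrarydimension} to reduce $(a,v,c_1,c_2)$ to one of the six normal forms by the case analysis you outline. The existence half of your argument is essentially complete.

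The uniqueness half, however, has a genuine gap. You assert that a unitary isomorphism must preserve $\frh$ because ``$\frh=[\g,\g]+\text{(something intrinsic)}$'', but this is not true: $[\g,\g]$ can have arbitrarily large codimension in $\frh$, and there is no way to recover $\frh$ from it in general. The paper handles this via Freibert's criterion (Lemma~\ref{lemma:freibert}): the codimension-one abelian ideal in an almost abelian Lie algebra is unique \emph{unless} the algebra is abelian or isomorphic to $\mathfrak{heis}_3\oplus\R^k$. This is what forces any isomorphism to lie in the stabilizer~\eqref{eqn:stabh}, after which one simply reads off which normal forms are related by such a transformation. You correctly flag the abelian exception, but you miss the Heisenberg exception entirely; the paper disposes of it by observing that a rank-one nilpotent $D$ cannot arise in the family $\g_0$ (since $A(t)$, being complex-linear, has even real rank), so one is confined to the isotropic families, where the argument requires separate care.

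A smaller point: your rough scaling weights are off. Under the rescaling in Lemma~\ref{lemma:orbitsarbitrarydimension} (with parameter $x$, so $\lambda=1/x$ in your notation), one has $a\mapsto\lambda a$, $v\mapsto\lambda^2 v$, $c_i\mapsto\lambda^3 c_i$, not $(\lambda,\lambda,\lambda^2)$ as you wrote. This does not affect your conclusions (your $x=k^2x'$ for $\g_2$ is correct), but since you yourself identify these exponents as the delicate point, it is worth getting them right from the exact formula rather than guessing.
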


The proof will require a few lemmas.\medskip

In order to identify pairs of isomorphic Lie algebras in the families $\g_i$, we will repeatedly use the fact that the codimension one abelian ideal in an almost abelian Lie algebra is generally unique, with the important exception of the three-dimensional Heisenberg Lie algebra $\mathfrak{heis}_3$, defined by the brackets $$[e_1,e_2]=-e_3.$$

More precisely, we have the following result due to Freibert.

\begin{lemma}[{\cite[Proposition 1]{Freibert12}}]\label{lemma:freibert}
    Let $\phi\colon\h\rtimes_D\R\to \h'\rtimes_{D'}\R$ be an isomorphism between two almost abelian Lie algebras. Then $\phi(\h)\subset\h'$, unless the Lie algebras are abelian or isomorphic to $\mathfrak{heis}_3\oplus\R^{k}$ for some $k$.
\end{lemma}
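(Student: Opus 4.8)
The plan is to deduce the statement from the uniqueness of the codimension one abelian ideal. If $\phi\colon\h\rtimes_D\R\to\h'\rtimes_{D'}\R$ is an isomorphism, then $\phi(\h)$ is an abelian ideal of $\g':=\h'\rtimes_{D'}\R$ of codimension one, exactly like $\h'$; so $\phi(\h)\subset\h'$ (indeed with equality, by dimension) will follow as soon as we know that $\g'$ admits only one such ideal. Since $\g\cong\g'$, the exceptional cases for $\g'$ are the same as those for $\g$, so it suffices to prove the following intrinsic statement: \emph{if an almost abelian Lie algebra $\g$ possesses two distinct codimension one abelian ideals, then $\g$ is abelian or isomorphic to $\heis_3\oplus\R^k$ for some $k$.}

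To this end I would fix $\g=\h\rtimes_D\R$, write $\g=\h\oplus\R z$ with $D=\ad(z)|_\h$, and suppose $\h_2\neq\h$ is a second codimension one abelian ideal. Since $\h$ and $\h_2$ are distinct hyperplanes, their sum is all of $\g$, so $\frk:=\h\cap\h_2$ has dimension $\dim\g-2$; moreover $\h_2\not\subset\h$, so after rescaling I may choose $w=z+v\in\h_2$ with $v\in\h$. The heart of the argument is to extract two inclusions from the two defining properties of $\h_2$. For $X\in\frk\subset\h$ one computes $[w,X]=[z,X]+[v,X]=DX$ (using $v,X\in\h$); since $\h_2$ is abelian and $w,X\in\h_2$, this forces $DX=0$, hence $\frk\subseteq\ker D$. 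On the other hand, for $Y\in\h$ one has $[Y,w]=[Y,z]=-DY$, so $[\h,w]=\Image D$; since $\h_2$ is an ideal, $\Image D\subseteq\h_2$, and since $\Image D\subseteq\h$ as well, we obtain $\Image D\subseteq\frk$.

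Combining the two inclusions gives $\Image D\subseteq\frk\subseteq\ker D$, so $D^2=0$; and $\dim\ker D\geq\dim\frk=\dim\g-2=\dim\h-1$ forces $\rank D\leq1$. If $\rank D=0$ then $D=0$ and $\g$ is abelian. If $\rank D=1$, then $D$ is a nilpotent rank-one endomorphism of $\h$, so in a suitable basis $\{e_1,\dots,e_m\}$ of $\h$ it satisfies $De_2=e_1$ and $De_i=0$ otherwise; the only nonzero bracket of $\g$ is then $[z,e_2]=e_1$, which exhibits $\g$ as $\heis_3\oplus\R^{m-2}$, with Heisenberg part $\Span{z,e_2,e_1}$ and centre the remaining $e_i$. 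This completes the reduction, and hence the lemma.

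I expect the main subtlety to lie in the bookkeeping that \emph{both} defining properties of $\h_2$ are genuinely needed: the abelian condition alone yields only $\rank D\leq1$, a bound also satisfied by the non-nilpotent rank-one derivations $D=\lambda P$ with $P$ a projection, which produce $\mathfrak{aff}(1)\oplus\R^k$ and are \emph{not} in the exceptional list. It is the ideal condition, through $\Image D\subseteq\frk$, that upgrades $\rank D\leq1$ to $D^2=0$ and eliminates that family — consistently with the fact that $\mathfrak{aff}(1)\oplus\R^k$ has a unique codimension one abelian ideal. Some care is also required in normalizing $w$ and in verifying that the rank-one nilpotent normal form is genuinely $\heis_3\oplus\R^k$ in the paper's convention $[e_1,e_2]=-e_3$, which is matched after relabelling the basis.
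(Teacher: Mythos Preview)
Your argument is correct. The reduction to uniqueness of the codimension one abelian ideal, the two inclusions $\frk\subseteq\ker D$ and $\Image D\subseteq\frk$ extracted from the abelian and ideal conditions on $\h_2$, and the resulting bound $D^2=0$, $\rank D\leq1$ are all sound; the identification of the rank-one nilpotent case with $\heis_3\oplus\R^{m-2}$ is straightforward.

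As for comparison: the paper does not supply its own proof of this lemma at all---it is quoted verbatim as \cite[Proposition 1]{Freibert12} and used as a black box in the proofs of Lemma~\ref{lemma:classification_non-isotropic} and Theorem~\ref{thm:classification}. So you have written a self-contained proof where the paper simply defers to the reference. Your discussion of why the ideal condition (and not merely the abelian one) is needed to exclude $\mathfrak{aff}(1)\oplus\R^k$ is a nice sanity check that goes beyond what the paper requires.
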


Applying this criterion, we immediately obtain a classification of the case  where $\frh_0$ is non-isotropic.

\begin{lemma}\label{lemma:classification_non-isotropic}
    Given an almost abelian pseudo-Kähler Lie algebra $(\g,J,g)$ with $\frh_0$ non-isotropic, there exist $t\in\mathcal{U}_{p,q}$, $a\in\R$, $\varepsilon=\pm1$, and a unitary Lie algebra isomorphism between $(\g,J,g)$ and $\g_0(t,a,\varepsilon)$. The parameters $(t,a)$ can be replaced with $(r(t),-a)$; outside of this ambiguity, $a$ and $t$ are determined uniquely.
\end{lemma}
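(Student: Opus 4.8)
The plan is to apply Theorem \ref{thm:characterization_non-isotropic_case} and Theorem \ref{thm:orbits_U(p,q)} to reduce the derivation $D$ to a normal form, then use Freibert's Lemma \ref{lemma:freibert} to control the ambiguity. First, since $\frh_0$ is non-isotropic, Theorem \ref{thm:characterization_non-isotropic_case} tells us that, in a basis of the form \eqref{eq:non-isotropic_pseudo-Hermitian}, the pseudo-Kähler condition forces $D = \operatorname{diag}(A, a)$ with $A \in \mathfrak{u}(\frh_1, J_1, g_1)$ and $a \in \R$. Identifying $(\frh_1, J_1)$ with a complex vector space and $g_1$ with the real part of a Hermitian form $h$ of signature $(p,q)$, the endomorphism $A$ becomes a skew-Hermitian endomorphism relative to $h$. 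By Theorem \ref{thm:orbits_U(p,q)}, there is a unitary isomorphism carrying $(A, \frh_1, g_1)$ to $(A(t), V(t), g(t))$ for a uniquely determined $t \in \mathcal{U}_{p,q}$. Extending this isomorphism by the identity on $\Span{e_{2n-1}, e_{2n}}$ gives a unitary isomorphism of almost pseudo-Hermitian vector spaces; since it conjugates $D$ to $D_0(t,a)$, it is in fact a Lie algebra isomorphism. The sign $\varepsilon = g(e_{2n}, e_{2n}) \in \{\pm 1\}$ is the common value $\varepsilon_{2n-1} = \varepsilon_{2n}$ appearing in \eqref{eq:non-isotropic_pseudo-Hermitian}, so $(\g,J,g) \cong \g_0(t,a,\varepsilon)$.

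Next I would analyze uniqueness. Suppose $\g_0(t,a,\varepsilon) \cong \g_0(t',a',\varepsilon')$ via a unitary Lie algebra isomorphism $\phi$. The metrics have signatures $(p+1, q+1)$ and $(p'+1, q'+1)$ with $s(t) = (p,q)$ etc., so $s(t) = s(t')$ and in particular $\varepsilon' = \varepsilon$ cannot be concluded from signature alone — instead $\varepsilon$ records on which factor the two-dimensional $\frh_0$ sits; but since $\phi$ is unitary, it preserves the signature of the metric restricted to $J(\frh^\perp) \oplus \frh^\perp$ only after we know $\phi$ maps $\frh_0$ to $\frh_0'$. The key point is that, outside the exceptional $\mathfrak{heis}_3 \oplus \R^k$ case (which cannot occur here because a pseudo-Kähler structure exists, forcing even rank considerations — or more directly, one checks $\g_0$ is never of that form when it is non-abelian with $a \neq 0$ or $A \neq 0$, and when it is abelian the statement about $\g_0$ versus $\g_6$ is the first listed ambiguity), Lemma \ref{lemma:freibert} gives $\phi(\frh) = \frh'$. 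A unitary map sending $\frh$ to $\frh'$ automatically sends $\frh^\perp$ to $(\frh')^\perp$ and hence $\frh_0 = \frh^\perp \oplus J\frh^\perp$ to $\frh_0'$, and $\frh_1$ to $\frh_1'$; moreover it must send $e_{2n}$ to $\pm e_{2n}'$ up to a vector in $\frh$, so that $\operatorname{ad}(e_{2n})|_{\frh} = D$ is conjugate (by a unitary map of $\frh_1$) to $\pm D'$. Restricting to $\frh_1$: either $A$ is unitarily conjugate to $A'$ and $a = a'$, giving $t = t'$ and $a = a'$ by the uniqueness in Theorem \ref{thm:orbits_U(p,q)}; or $A$ is unitarily conjugate to $-A'$ and $a = -a'$, which by Lemma \ref{lemma:changesign} means $t' = r(t)$ and $a' = -a$. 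This is exactly the claimed ambiguity, and $\varepsilon$ is unchanged since $g(e_{2n},e_{2n}) = \varepsilon = g'(\phi(e_{2n}), \phi(e_{2n})) = \varepsilon'$ once we know $\phi(e_{2n}) = \pm e_{2n}' \bmod \frh'$ and $\phi$ is an isometry with $\frh'$ null-complement structure matching.

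The main obstacle I expect is the bookkeeping around the exceptional case in Lemma \ref{lemma:freibert} and the possibility that $\phi(e_{2n})$ differs from $\pm e_{2n}'$ by a nonzero element of $\frh'$. For the former: one has to rule out $\g_0(t,a,\varepsilon) \cong \mathfrak{heis}_3 \oplus \R^k$ unless it is abelian; but $\mathfrak{heis}_3 \oplus \R^k$ has one-dimensional derived algebra, whereas for $\g_0(t,a,\varepsilon)$ the derived algebra is $\Image D$, which is one-dimensional only if $\operatorname{rank} D \le 1$; a short case check (using that a nonzero skew-Hermitian $A$ relative to a nondegenerate form has rank $\ge 1$ with the generalized-eigenspace structure of Theorem \ref{thm:orbits_U(p,q)}) shows $\mathfrak{heis}_3$ itself is not almost abelian pseudo-Kähler of the required parity, and in any case the claimed list already isolates the abelian instance as the $\g_0$–$\g_6$ overlap. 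For the latter: replacing $e_{2n}$ by $e_{2n} + Y$ with $Y \in \frh$ changes $\operatorname{ad}(e_{2n})|_\frh$ by $\operatorname{ad}(Y)|_\frh = 0$ since $\frh$ is abelian, so $D$ is genuinely well-defined up to sign and unitary conjugation on $\frh$, and the ambiguity is precisely what Lemma \ref{lemma:changesign} captures. Assembling these observations yields the statement.
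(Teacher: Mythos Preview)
Your approach is essentially the same as the paper's: existence via Theorem~\ref{thm:characterization_non-isotropic_case} plus the orbit classification of Theorem~\ref{thm:orbits_U(p,q)}, and uniqueness via Freibert's Lemma~\ref{lemma:freibert} together with Lemma~\ref{lemma:changesign}. The skeleton is correct.

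The one place where your argument is genuinely muddled is the handling of the exceptional case in Freibert's lemma. Your claim that $\mathfrak{heis}_3\oplus\R^k$ ``cannot occur here because a pseudo-Kähler structure exists'' is false: $\mathfrak{heis}_3\oplus\R$ does admit a pseudo-Kähler structure (it is the algebra $\mathfrak{rh}_3$ in Example~\ref{example:4d}). The clean way to dispose of this case, which is what the paper does, is the following. The exceptional case of Lemma~\ref{lemma:freibert} requires $D_0(t,a)$ to be nilpotent of rank at most one. Nilpotency forces $a=0$; and since $A(t)$ is $\C$-linear, its real rank is even, so rank $\leq 1$ forces $A(t)=0$. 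Hence $D_0(t,a)=0$ and both Lie algebras are abelian, where uniqueness is trivial. Your ``even rank considerations'' gesture toward this, but the argument needs to be stated.

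A minor point: once $\phi(\frh)=\frh'$ and $\phi$ is unitary, you get $\phi(\frh^\perp)=(\frh')^\perp$ directly, so $\phi(e_{2n})=\pm e_{2n}'$ on the nose (not merely mod $\frh'$), since these are unit vectors spanning one-dimensional spaces. Your subsequent discussion of replacing $e_{2n}$ by $e_{2n}+Y$ is then unnecessary. This also immediately gives $\varepsilon=\varepsilon'$ without the detour you take.
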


\begin{proof}
    The fact that every almost abelian pseudo-Kähler Lie algebra with $\frh_0$ non-isotropic takes the form $\g_0(t,a,\varepsilon)$ follows from Proposition~\ref{prop:characterization_non-isotropic_case} and the classification of skew-Hermitian matrices.\medskip

    To prove uniqueness, suppose that there is an isometric isomorphism preserving the complex structure between the pseudo-Kähler Lie algebras $\h\rtimes_{D_0(t,a)}\Span{e_{2n}}$ and $\h'\rtimes_{D_0(t',a')}\Span{e_{2n}'}$, with obvious notation, both endowed with the pseudo-Kähler structure described in the statement. By Freibert's criterion (see Lemma \ref{lemma:freibert}), either $D_0(t,a)$ and $D_0(t',a')$ are nilpotent matrices of rank less than two, or the isomorphism maps $\h$ to $\h'$. In the first case, $D_0(t,a)$ and $D_0(t',a')$ are trivially zero. Otherwise, the isomorphism maps $e_{2n}$ to $\pm e_{2n}'$, and consequently $e_{2n-1}$ to $\pm e_{2n-1}$. Thus, $a'=\pm a$ and $A(t')$ is conjugated to $\pm A(t)$. By Lemma \ref{lemma:changesign}, this implies that either $t'=t$ and $a'=a$, or $t'=r(t)$ and $a'=-a$.
\end{proof}

In the case where $\frh_0$ is isotropic, we first need to determine the subgroup of unitary transformations that leaves invariant the codimension one abelian ideal $\frh$.

\begin{lemma}\label{lemma:orbitsarbitrarydimension}
    Let $\g=\frh\rtimes_D\R$ and let $(J,g)$ be the almost pseudo-Hermitian structure defined in \eqref{eqn:almosthermitianstructurezero}. The stabilizer of $\frh$ in $\mathrm{U}(\g,J,g)$ is \begin{equation}\label{eqn:stabh}
    \left\{\begin{pmatrix}
        x & 0 & -x(J_VC^{-1}u)^{\flat_V} & y & -\frac{1}{2}xg_V(u,u)\\
        0 &x & x(C^{-1}u)^{\flat_V} & \frac{1}{2}xg_V(u,u) & y\\
        0 & 0 & C & u & J_V u \\
        0 & 0 & 0 & \frac{1}{x} & 0 \\
        0 & 0 & 0 & 0 & \frac{1}{x}
        \end{pmatrix}\left|\ \ \begin{gathered}x\in\R\setminus\{0\}, y\in\R\\u\in V\\C\in\mathrm{U}(V,J_V,g_V)\end{gathered}\right.\right\}.
    \end{equation}
    
    If $D$ is as in \eqref{eq:D_zero_g0}, changing the frame by \eqref{eqn:stabh} modifies the components of $D$ by $$a\mapsto \frac{a}{x}, \quad c_1\mapsto \frac{c_1}{x^3} -\frac{2ay}{x^2} - \frac{2g(J_V v,u)}{x^2}+\frac{g(J_Vu, Au)}{x}, \quad c_2\mapsto \frac{c_2}{x^3},$$
    $$v\mapsto \frac1xC^{-1}Au+\frac1{x^2}C^{-1}v- \frac{a}{x} C^{-1}u , \quad A\mapsto \frac{1}{x}C^{-1}AC.$$
\end{lemma}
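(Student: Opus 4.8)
The plan is to compute the stabilizer of $\frh$ in $\mathrm{U}(\frg,J,g)$ directly, exploiting the flag $\frh^\perp\subset\frh_0\subset\frh_0^\perp\subset\frh\subset\frg$ which any such transformation must preserve, and then to carry out the conjugation of the derivation. First I would observe that an element $\phi\in\mathrm{U}(\frg,J,g)$ stabilizing $\frh$ automatically stabilizes $\frh^\perp=\Span{e_1}$ (since $\phi$ is an isometry), hence also $J\frh^\perp=\Span{e_2}$ (since $\phi$ is complex-linear), hence also $\frh_0=\frh^\perp\oplus J\frh^\perp$ and its orthogonal complement $\frh_0^\perp$. Thus $\phi$ is block upper-triangular with respect to the decomposition $\Span{e_1,e_2}\oplus V\oplus\Span{e_{2n-1},e_{2n}}$ once we note that $V$ is not canonically preserved but $\phi(V)\subset\frh_0^\perp=\Span{e_1,e_2}\oplus V$ and $\phi(\Span{e_{2n-1},e_{2n}})\subset\frg$ with image mapping onto $\Span{e_{2n-1},e_{2n}}$ modulo $\frh$. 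Writing $\phi e_1=x e_1$ for some $x\in\R\setminus\{0\}$ (using that $\phi$ preserves the real line $\Span{e_1}$), complex-linearity gives $\phi e_2=x e_2$; then $\phi|_V$ induces a map $C\colon V\to V$ after projecting away the $\Span{e_1,e_2}$ component, and the remaining freedom is encoded in a vector $u\in V$ (the $V$-component of $\phi e_{2n}$) and a scalar $y\in\R$.

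Next I would impose the three conditions $\phi\circ J=J\circ\phi$, $\phi^*g=g$, and solve for the entries. Complex-linearity forces the $J_V u$ column to be determined by the $u$ column and forces $C$ to commute with $J_V$. The isometry condition $g(\phi e_1,\phi e_{2n})=g(e_1,e_{2n})=1$ gives that the $\Span{e_1}$-component of $\phi e_{2n}$ times $x$ equals $1$, i.e.\ that component is $\frac1x$, and similarly $g(\phi e_2,\phi e_{2n-1})=-1$ fixes the $e_2$-component of $\phi e_{2n-1}$ as $\frac1x$; the conditions $g(\phi e_{2n},\phi e_{2n})=0$ and $g(\phi e_{2n-1},\phi e_{2n-1})=0$ then pin down the $\Span{e_2}$-component of $\phi e_{2n}$ as $-\tfrac12 x g_V(u,u)$ (and symmetrically for $\phi e_{2n-1}$); the cross-conditions $g(\phi e_1,\phi e_{2n-1})=0$, $g(\phi X,\phi e_{2n})=0$ for $X\in V$, etc., force the $V^*$-row entries of $\phi e_{2n}$ to be $(C^{-1}u)^{\flat_V}$ up to the factor $x$ and a $J_V$-twist, reproducing the $-x(J_V C^{-1}u)^{\flat_V}$ and $x(C^{-1}u)^{\flat_V}$ entries; finally $g|_V$ being preserved gives $C\in\mathrm{U}(V,J_V,g_V)$, and $g(\phi e_{2n-1},\phi e_{2n})=0$ (which should equal $g(e_{2n-1},e_{2n})=0$) is automatically satisfied, leaving $y$ free. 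This yields exactly the matrix \eqref{eqn:stabh}; conversely one checks any such matrix is unitary and stabilizes $\frh$, which is a routine verification.

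For the last part, I would compute $D\mapsto P^{-1}DP$ where $P$ is the matrix in \eqref{eqn:stabh} restricted appropriately (note $D\in\End(\frh)$, so we use the $(2n-1)\times(2n-1)$ block of $P$ acting on $\frh=\Span{e_1,e_2}\oplus V\oplus\Span{e_{2n-1}}$, but the conjugation also involves how $e_{2n}$ transforms, since $D=\ad(e_{2n})|_\frh$ and $e_{2n}\mapsto \frac1x e_{2n}+(\text{terms in }\frh)$). More precisely, if $\phi e_{2n}=\frac1x e_{2n}+ z$ with $z\in\frh$, then the new derivation is $\ad(\phi e_{2n})|_\frh = \frac1x\,\phi\circ D\circ\phi^{-1}$ since $\ad(z)|_\frh=0$ as $\frh$ is abelian; so the new $D$ is $\frac1x P_\frh^{-1} D P_\frh$ where $P_\frh$ is the restriction of $\phi$ to $\frh$. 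Plugging in the block form and reading off entries gives $a\mapsto a/x$ from the bottom-right corner (the $(2n-1,2n-1)$ entry, rescaled), $c_2\mapsto c_2/x^3$ and the transformations of $v$ and $A$ from the middle blocks, with the $c_1$ transformation being the messiest because it collects contributions from $x$-rescaling, from the $y$-shift (the $-2ay/x^2$ term), from the $u$-dependence of the off-diagonal $V^*$-rows interacting with $v$ (the $-2g(J_V v,u)/x^2$ term), and from the quadratic $g_V(u,u)$ entries interacting with $A$ (the $g(J_V u, Au)/x$ term).

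The main obstacle is bookkeeping in this last conjugation: tracking how each of the $x$, $y$, $u$, $C$ degrees of freedom contributes to the transformed $c_1$, especially disentangling the factor-of-$x$ powers (which come from the asymmetric scaling $e_1\mapsto xe_1$ versus $e_{2n}\mapsto\frac1x e_{2n}$ combined with the $\frac1x$ prefactor from $\ad(\phi e_{2n})=\frac1x\phi D\phi^{-1}$) and verifying that no unexpected terms survive in the transformed $v$ (i.e.\ that the $\Span{e_1,e_2}$-components of $\phi^{-1}(\text{column of }D)$ vanish, which uses the pseudo-Kähler constraints on $D$ from \eqref{eq:D_zero_g0}). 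I expect this to be a direct but lengthy matrix computation; the conceptual content is entirely in the first two paragraphs, and I would present the conjugation formulas as the result of a routine calculation, perhaps checking one or two entries explicitly (e.g.\ the $c_2\mapsto c_2/x^3$ scaling, which cleanly isolates the cube of $x$) to orient the reader.
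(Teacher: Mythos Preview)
Your approach is essentially identical to the paper's: exploit the invariant flag $\frh^\perp\subset\frh_0\subset\frh_0^\perp\subset\frh$ to get a block upper-triangular form, impose $J$-linearity and the isometry condition block by block, then conjugate $D$ by the resulting matrix with the extra $\frac1x$ factor coming from the $e_{2n}$-component of $\phi e_{2n}$.

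Two small slips to clean up. First, when you write ``$g(\phi e_1,\phi e_{2n})=1$ gives that the $\Span{e_1}$-component of $\phi e_{2n}$ times $x$ equals $1$'', you mean the $e_{2n}$-component: since $g(e_1,\cdot)=e^{2n}$, the pairing picks out the $e_{2n}$-coefficient of $\phi e_{2n}$, not the $e_1$-coefficient (and similarly for $\phi e_{2n-1}$). Second, the line ``$\ad(\phi e_{2n})|_\frh = \frac1x\,\phi\circ D\circ\phi^{-1}$'' is not correct as a statement about maps: since $\phi e_{2n}=\frac1x e_{2n}+z$ with $z\in\frh$ abelian, one simply has $\ad(\phi e_{2n})|_\frh=\frac1x D$ as an endomorphism of $\frh$, and then expressing this in the new basis $E_i=\phi e_i$ gives the matrix $\frac1x P_\frh^{-1}DP_\frh$ that you correctly state next. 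Your final formula is right; just correct the justification.
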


\begin{proof}
    Let $\Gamma$ be a matrix in the stabilizer. Since $\Gamma$ preserves the metric, it also preserves $$\frh^\perp=\Span{e_1,\dotsc, e_{2n-1}}^\perp=\Span{e_1}.$$

    Since $\Gamma$ commutes with $J$, $e_1$ and $e_2$ are eigenvectors with eigenvalue $x\neq0$. Therefore $$\frh_0^\perp=\Span{e_1,e_2}^\perp=\Span{e_1,\dotsc, e_{2n-2}}$$ is invariant under $\Gamma$, and then $$\Gamma=\begin{pmatrix}
        xI & A & B \\
        0 & C & D \\
        0 & 0 & E
    \end{pmatrix}\in\End(\frg),$$ where $I\colon \frh_0\to \frh_0$ is the identity map, $C$ and $E$ are endomorphisms of $V$ and $\Span{e_{2n-1},e_{2n}}$, respectively, and $$A\colon V\to\frh_0, \quad B\colon \Span{e_{2n-1},e_{2n}}\to \frh_0,  \quad D\colon \Span{e_{2n-1},e_{2n}}\to V$$ are linear maps. Imposing that $\Gamma$ commutes with $J$ implies that each of $A,B,C,D,E$ is $J$-linear in the appropriate sense. We can write the metric tensor as $$g=\begin{pmatrix}
        0 & 0 & -\omega \\
        0 & g_V & 0 \\
        \omega & 0 & 0
    \end{pmatrix},\quad\omega=\begin{pmatrix}
        0 & -1\\
        1 & 0
    \end{pmatrix}.$$

    Imposing that $\Gamma$ is orthogonal, i.e.\ $g=\tran \Gamma g\Gamma$, gives $$-\omega = -x\omega E, \quad \tran C g_V C=g_V, \quad -\tran A\omega E +\tran C g_V D=0, \quad -\tran B\omega E +\tran Dg_V D+\tran E\omega B=0.$$

    Thus $E=\frac1x I$, $C$ is $g_V$-orthogonal, and writing out $D=\begin{pmatrix} u & J_Vu\end{pmatrix}$, we have $$\tran A =-x\tran C g_V D\omega=-x\tran C g_V \begin{pmatrix}J_Vu & -u\end{pmatrix}=-x g_V C^{-1} \begin{pmatrix}J_Vu & -u\end{pmatrix}=x g_V \begin{pmatrix}-C^{-1}J_Vu & C^{-1}u\end{pmatrix},$$ thus $$A=\begin{pmatrix}
        -x (J_VC^{-1}u)^{\flat_V}\\
        x(C^{-1}u)^{\flat_V}
    \end{pmatrix}.$$

    Finally, since $B$ is $J$-linear, it is a linear combination of $I$ and $\omega$. The component along $\omega$ is determined by the equation $-\tran B\omega E +\tran Dg_V D+\tran E\omega B=0$, and the other component is free. Therefore, we get $B=\frac{1}{2}xg(u,u)\omega+yI$ for some $y\in\R$, and the matrix $\Gamma$ takes the form of \eqref{eqn:stabh}; moreover $$\Gamma^{-1}=\begin{pmatrix}
        \frac{1}{x} & 0 & (J_Vu)^{\flat_V} & -y & -\frac{1}{2}xg_V(u,u)\\
        0 & \frac{1}{x} & - u^{\flat_V} & \frac{1}{2}xg_V(u,u) & -y\\
        0 & 0 & C^{-1} & -xC^{-1}u & -xJ_VC^{-1}u\\
        0 & 0 & 0 & x & 0 \\
        0 & 0 & 0 & 0 & x
    \end{pmatrix}.$$

    If we set $E_i:=\Gamma e_i$, then $\lie h=\Span{E_1,\dotsc, E_{2n-1}}$ is abelian, and $$\ad(E_{2n})|_{\lie h}=\frac1x\ad(e_{2n})|_{\lie h},$$ which in the basis $\{E_1,\dotsc, E_{2n-1}\}$ is given by the matrix $\frac{1}{x}(\Gamma^{-1})|_{\frh}D\Gamma|_\frh$. In other words, we consider the (inverse) conjugate action of $\Gamma$ on $D$, which ignores the last row and column. After a straightforward computation we get $$(\Gamma^{-1})|_{\frh}D\Gamma|_\frh=\left(\begin{smallmatrix}
        -a & 0 & a(J_VC^{-1}u)^{\flat_V}-\frac1x(J_Vv)^{\flat_V} C+(J_Vu)^{\flat_V} AC & \frac{c_1}{x^2}-\frac{2ay}{x}-\frac{2g(J_V v,u)}{x}+g(J_Vu, Au)\\
        0 & -a & -a(C^{-1}u)^{\flat_V} +\frac{1}{x}v^{\flat_V} C-u^{\flat_V} AC&\frac{c_2}{x^2}-g(u,Au)\\
        0 & 0 & C^{-1}AC & C^{-1}Au+\frac{1}{x}C^{-1}v-aC^{-1}u\\
        0 & 0 & 0 & a
    \end{smallmatrix}\right).$$

    Using that $A$ is skew-Hermitian (giving $g(u,Au)=0$), extracting components and dividing by $x$ we obtain the statement.
\end{proof}

\begin{proof}[Proof of Theorem \ref{thm:classification}]
    Let $\g=\frh\rtimes_D\R$ be an almost abelian pseudo-Kähler Lie algebra with $\frh_0$ non-isotropic. Then Lemma \ref{lemma:classification_non-isotropic} shows that it belongs to the family $\frg_0$.\medskip

    Let $\g=\frh\rtimes_D\R$ be an almost abelian pseudo-Kähler Lie algebra with $\frh_0$ isotropic, and write $D$ as in Proposition~\ref{prop:characterization_isotropic_case}. We will show that up to the action of $\mathrm{U}(\g,J,g)$, the derivation $D$ is in one of the families $D_i$, $1\leq i\leq 6$. This amounts to acting on $D$ by the transformations of Lemma \ref{lemma:orbitsarbitrarydimension}. It is clear that $A$ can be taken up to conjugation in the Lie algebra of $\mathrm{U}(V,J_V,g_V)$. Thus, we may assume $A=A(t)$ and $V=V(t)$ for some $t\in\mathcal{U}_{p,q}$, with $p+q=n-2$.\medskip

    If $a\neq0$, then $A-a\Id$ is invertible since $A$ is skew-Hermitian; acting with an element of the stabilizer with $x=1$, $C=\Id$, $y=0$, we can map an arbitrary element to one with $v=0$. We can then act with an element with $y\neq0$ so that $c_1$ becomes zero, and rescale $a=1$ using an element with $x=a$. This makes $D$ of the form $D_1(t,c_2)$.\medskip

    If $a=0$, we can act with an element with nonzero $u$ to obtain $v$ in the complement of $\Im A$ introduced in Lemma \ref{lemma:parametrizecomplement}. If $v$ is zero, taking into account the simultaneous rescalings of $c_1,c_2$, we obtain the families $D_4(t,c_1)$, $D_5(t)$ and $D_6(t)$.\medskip

    If $v$ is nonzero, by Lemma \ref{lemma:parametrizecomplement} we can assume $v=v(x)$ for a nonzero element $x\in X_t$; notice that this case can only occur if $N_t$ is non-empty, i.e.\ $t$ contains a nilpotent block. Moreover, there is some $u\in\ker A$ such that $g(v,Ju)\neq0$, and this allows us to obtain $c_1=0$. Since $v$ and $c_2$ can be simultaneously rescaled, this gives the families $D_2(t,x)$ and $D_3(t,x)$.\medskip

    Now suppose that we have two distinct almost abelian Lie algebras in two families $\g_i$, $\g_j$, and suppose that there is a unitary Lie algebra isomorphism between them. If they are both abelian, one of them is in the family $\g_0$ and the other in $\g_6$. If they are isomorphic to $\mathfrak{heis}_3\oplus\R^k$, their inner derivations are nilpotent of rank one; however, this only occurs for one Lie algebra in the family $\g_5(t)$, with $A(t)=0$. Otherwise, Freibert's criterion implies that the isomorphism preserves the codimension one ideal $\frh$, and $D,D'$ must be related by a transformation as in Lemma \ref{lemma:orbitsarbitrarydimension}. It is now clear that the only non-trivial possibilities are those listed in the statement.
\end{proof}

\begin{remark}
    Given $t\in\mathcal{U}_{p,q}$ and $c_1\in\R$, define the linear map $\phi\colon\frg_{4}(t,c_1)\to\frg_{5}(t)$ as follows: \begin{itemize}
        \item If $c_1\neq0$, $\phi(e_1)=\frac{1}{2c_1}(f_1-f_2)$, $\phi(e_2)=\frac{1}{2}(f_1+f_2)$ and $\phi(e_j)=f_j$ for $j=3,\ldots,2n$.
        \item If $c_1=0$, $\phi(e_1)=-f_2$, $\phi(e_2)=f_1$ and $\phi(e_j)=f_j$ for $j=3,\ldots,2n$.
    \end{itemize} In both cases $\phi\colon\frg_{4}(t,c_1)\to\frg_{5}(t)$ is an isomorphism of Lie algebras. However, the isomorphism cannot be chosen to be unitary since there is no transformation of the form \eqref{eqn:stabh} relating $D_4(t,c_1)$ and $D_5(t)$.
\end{remark}

\subsection{Classification in dimension six}\label{section:6d_classification}

Here we particularize the general classification obtained in Subsection \ref{section:classification_every_dim} to the six-dimensional case. We will consider separately the cases when $\frh_0$ is non-isotropic and when $\frh_0$ is isotropic.\medskip

Let us start with the non-isotropic case. Here we know that the almost abelian Lie algebra $\frg$ has to be isomorphic to $\g_0(t,a,\varepsilon)$, where $t\in\mathcal{U}_{p,q}$, with $p+q=2$, $a\in\R$ and $\varepsilon=\pm1$. The derivation $D$ takes the form $$D_0(t,a)=\begin{pmatrix}
    A(t)&0\\
    0&a
\end{pmatrix},$$ and $\frg_0(t,a,\varepsilon)$ is equipped with the following pseudo-Kähler structure: \begin{equation}\label{eqn:Jgnonisotropic6}
    Je_1=e_2,\,Je_3=e_4,\,Je_5=e_6\quad\text{and}\quad g=g(t)+\varepsilon(e^5\otimes e^5+e^6\otimes e^6),
\end{equation} where $g(t)$ is the real part of the Hermitian metric $h(t)$. Note that $(p,q)\in\{(2,0),(1,1),(0,2)\}$. The case $(0,2)$ means that we are considering a negative-definite metric on the subspace $\frh_1\subset\frg$, however we can always change the global sign of the metric and obtain a positive-definite one, so we will only consider the case $(2,0)$.\medskip

Let $\lambda_j\in\R$ and $\zeta_j=i\lambda_j$ for $j=1,2$. Then $$t=\Delta_0^+(\zeta_1)+\Delta_0^+(\zeta_2)=\left(\begin{pmatrix}
    \zeta_1&0\\
    0&\zeta_2
\end{pmatrix},\C\oplus\C,\begin{pmatrix}
    1&0\\
    0&1
\end{pmatrix}\right)$$ satisfies $s(t)=(2,0)$, where $s$ is the signature map \eqref{eq:signature_map}, and \begin{equation}\label{6d_noniso_(2,0)}
    D=\left(\begin{smallmatrix}
    0 & \lambda_1 & 0 & 0 & 0 \\
    -\lambda_1 & 0 & 0 & 0 & 0 \\
    0 & 0 & 0 & \lambda_2 & 0 \\
    0 & 0 & -\lambda_2 & 0 & 0 \\
    0 & 0 & 0 & 0 & a
\end{smallmatrix}\right),\quad g(t)=\begin{smallpmatrix}
    1 & 0 & 0 & 0 \\
    0 & 1 & 0 & 0 \\
    0 & 0 & 1 & 0 \\
    0 & 0 & 0 & 1
\end{smallpmatrix}.\end{equation}

Consider now the case $(p,q)=(1,1)$, which corresponds to taking a metric of neutral signature on $\frh_1$. Here we have different types $t\in\mathcal{U}$ such that $s(t)=(1,1)$: \begin{itemize}
    \item Let $\lambda\in\R$ and $\zeta=i\lambda$. Then $$t=\Delta_1^+(\zeta)=\left(\begin{pmatrix}
        \zeta&1\\
        0&\zeta
    \end{pmatrix},\C^2,\begin{pmatrix}
        0&i\\
        -i&0
    \end{pmatrix}\right)$$ and \begin{equation}\label{6d_noniso_(1,1)-I}
        D=\left(\begin{smallmatrix}
        0 & \lambda & 1 & 0 & 0 \\
        -\lambda & 0 & 0 & 1 & 0 \\
        0 & 0 & 0 & \lambda & 0 \\
        0 & 0 & -\lambda & 0 & 0 \\
        0 & 0 & 0 & 0 & a
    \end{smallmatrix}\right),\quad g(t)=\begin{smallpmatrix}
        0 & 0 & 0 & 1 \\
        0 & 0 & -1 & 0 \\
        0 & -1 & 0 & 0 \\
        1 & 0 & 0 & 0
    \end{smallpmatrix}.\end{equation}
    \item Let $\lambda_j\in\R$ and $\zeta_j=i\lambda_j$ for $j=1,2$. Then $$t=\Delta_0^+(\zeta_1)+\Delta_0^-(\zeta_2)=\left(\begin{pmatrix}
        \zeta_1&0\\
        0&\zeta_2
    \end{pmatrix},\C\oplus\C,\begin{pmatrix}
        1&0\\
        0&-1
    \end{pmatrix}\right)$$ and \begin{equation}\label{6d_noniso_(1,1)-II}
        D=\left(\begin{smallmatrix}
        0 & \lambda_1 & 0 & 0 & 0 \\
        -\lambda_1 & 0 & 0 & 0 & 0 \\
        0 & 0 & 0 & \lambda_2 & 0 \\
        0 & 0 & -\lambda_2 & 0 & 0 \\
        0 & 0 & 0 & 0 & a
    \end{smallmatrix}\right),\quad g(t)=\begin{smallpmatrix}
        1 & 0 & 0 & 0 \\
        0 & 1 & 0 & 0 \\
        0 & 0 & -1 & 0 \\
        0 & 0 & 0 & -1
    \end{smallpmatrix}.\end{equation}
    \item Let $\rho>0$, $\lambda\in\R$ and $\zeta=\rho+i\lambda$. Then $$t=\Delta_0(\zeta,-\overline{\zeta})=\left(\begin{pmatrix}
        \zeta&0\\
        0&-\overline{\zeta}
    \end{pmatrix},\C\oplus\C,\begin{pmatrix}
        0&i\\
        -i&0
    \end{pmatrix}\right)$$ and \begin{equation}\label{6d_noniso_(1,1)-III}
        D=\left(\begin{smallmatrix}
        \rho&\lambda&0&0&0\\
        -\lambda&\rho&0&0&0\\
        0&0&-\rho&\lambda&0\\
        0&0&-\lambda&-\rho&0\\
        0&0&0&0&a
    \end{smallmatrix}\right),\quad g(t)=\begin{smallpmatrix}
        0 & 0 & 0 & 1 \\
        0 & 0 & -1 & 0 \\
        0 & -1 & 0 & 0 \\
        1 & 0 & 0 & 0
    \end{smallpmatrix}.\end{equation}
\end{itemize}

\begin{proposition}\label{prop:class_6d_non-isotropic}
    Let $(\frg=\frh\rtimes_D{\Span{e_6}},J,g)$ be an almost abelian pseudo-Kähler Lie algebra of dimension six with $e_6$ non-isotropic. Then there is basis $e_1,\dotsc, e_5$ of $\frh$ such that $(J,g)$ takes the form \eqref{eqn:Jgnonisotropic6} and the derivation $D$ and the metric $g(t)$ are as in one of \eqref{6d_noniso_(2,0)}, \eqref{6d_noniso_(1,1)-I}, \eqref{6d_noniso_(1,1)-II} or \eqref{6d_noniso_(1,1)-III}.
\end{proposition}

We focus now on the isotropic case. Here we know that the almost abelian Lie algebra $\frg$ has to be isomorphic to $\g_i(t,\dots)$, for $i\in\{1,\ldots,6\}$, where $t\in\mathcal{U}_{p,q}$, with $p+q=1$. The derivation $D$ takes the form of $D_i(t,\dots)$ for $i\in\{1,\ldots,6\}$; and $\frg_i(t,\dots)$ is equipped with the following pseudo-Kähler structure: \begin{equation}\label{eqn:Jgisotropic6}
    Je_1=e_2,\,Je_3=e_4,\,Je_5=e_6\quad\text{and}\quad g=e^1\odot e^6-e^2\odot e^5+g(t),
\end{equation} where $g(t)$ is the real part of the Hermitian metric $h(t)$. The only possible type $t\in\mathcal{U}_{1,0}$ is $$t=\Delta_0^+(\zeta)=\big((\zeta),\C,(1)\big),$$ where $\zeta=i\lambda\in i\R$, thus $g(t)=\begin{smallpmatrix}
    1&0\\
    0&1
\end{smallpmatrix}$. Therefore the matrices $D_1,D_4,D_5,D_6$ take the form: \begin{equation}\label{6d_iso_D1456}
    \begin{smallpmatrix}
        -1 & 0 & 0 & 0 & 0 \\
        0 & -1 & 0 & 0 & c_{2} \\
        0 & 0 & 0 & \lambda & 0 \\
        0 & 0 & -\lambda & 0 & 0 \\
        0 & 0 & 0 & 0 & 1
    \end{smallpmatrix},\quad\begin{smallpmatrix}
        0 & 0 & 0 & 0 & c_{1} \\
        0 & 0 & 0 & 0 & 1 \\
        0 & 0 & 0 & \lambda & 0 \\
        0 & 0 & -\lambda & 0 & 0 \\
        0 & 0 & 0 & 0 & 0
    \end{smallpmatrix},\quad\begin{smallpmatrix}
        0 & 0 & 0 & 0 & 1 \\
        0 & 0 & 0 & 0 & 0 \\
        0 & 0 & 0 & \lambda & 0 \\
        0 & 0 & -\lambda & 0 & 0 \\
        0 & 0 & 0 & 0 & 0
    \end{smallpmatrix},\quad\begin{smallpmatrix}
        0 & 0 & 0 & 0 & 0 \\
        0 & 0 & 0 & 0 & 0 \\
        0 & 0 & 0 & \lambda & 0 \\
        0 & 0 & -\lambda & 0 & 0 \\
        0 & 0 & 0 & 0 & 0
    \end{smallpmatrix},
\end{equation} with $\lambda,c_1,c_2\in\R$. For the matrices $D_2$ and $D_3$ we need to find the vector $v(x)\in V$ as in \eqref{eqn:vofx}. Note that in these two cases, $t\in\mathcal{U}_{1,0}$ contains a nilpotent block, hence $t=\Delta_0^+(0)$, so $A=0$ and the set of nilpotent blocks in $t$ is just $$N_t=\{\Delta_0^+(0)\}.$$

Let $v=v(\Delta_0^+(0))$ be the last element in the standard basis of $V(\Delta_0^+(0))=\C$, so that $V(\Delta_0^+(0))=\Span{v}$. Since $\abs{h(v,v)}=1$, we can simply take $v=1\in\C$. Let $x=x(\Delta_0^+(0))\in\R_{\geq0}$. Then $$v(x)=\sum_{\Delta\in N_t}x(\Delta)v(\Delta)=x(\Delta_0^+(0))v(\Delta_0^+(0))=x\cdot1=\begin{pmatrix}
    x\\0
\end{pmatrix}\in\R^2.$$

Since in $D_2$ we can divide each entry by $x$, we get the following form of the matrices $D_2$ and $D_3$: \begin{equation}\label{6d_iso_D23}
    \left(\begin{smallmatrix}
        0 & 0 & 0 & -1 & 0 \\
        0 & 0 & 1 & 0 & 0 \\
        0 & 0 & 0 & 0 & 1 \\
        0 & 0 & 0 & 0 & 0 \\
        0 & 0 & 0 & 0 & 0
    \end{smallmatrix}\right),\quad \left(\begin{smallmatrix}
        0 & 0 & 0 & -x & 0 \\
        0 & 0 & x & 0 & 1 \\
        0 & 0 & 0 & 0 & x \\
        0 & 0 & 0 & 0 & 0 \\
        0 & 0 & 0 & 0 & 0
    \end{smallmatrix}\right).
\end{equation}

\begin{proposition}\label{prop:class_6d_isotropic}
    Let $(\frg=\frh\rtimes_D\Span{e_6},J,g)$ be an almost abelian pseudo-Kähler Lie algebra of dimension six with $e_6$ isotropic. Then there is basis $e_1,\dotsc, e_5$ of $\frh$ such that $J$ takes the form \eqref{eqn:Jgisotropic6}, $g=e^1\odot e^6-e^2\odot e^5+e^3\otimes e^3+e^4\otimes e^4$, and the derivation $D$ is one of \eqref{6d_iso_D1456} or one of \eqref{6d_iso_D23}.
\end{proposition}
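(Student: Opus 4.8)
The plan is to specialize Theorem \ref{thm:classification} to the six-dimensional isotropic case, where $n=3$ and the subspace $V=V(t)$ is two-dimensional. First I would invoke the theorem to conclude that $\frg$ is unitarily isomorphic to one of the $\frg_i(t,\dots)$, $i\in\{1,\dots,6\}$, with $t\in\mathcal{U}_{p,q}$ and $p+q=n-2=1$. Since the signature $(p,q)$ must be $(1,0)$ or $(0,1)$, and the case $(0,1)$ is obtained from $(1,0)$ by a global sign change of the metric (an isometry that preserves $J$), it suffices to treat $t\in\mathcal{U}_{1,0}$. The next step is to enumerate the elements of $\mathcal{U}_{1,0}$: since $V(t)$ is a one-complex-dimensional space with a Hermitian form of signature $(1,0)$, the only block is $\Delta_0^+(\zeta)$ with $\zeta=i\lambda$, $\lambda\in\R$, so $t=\Delta_0^+(\zeta)$, $A(t)$ is the $2\times2$ real rotation-type matrix with entries $\pm\lambda$, and $g(t)=e^3\otimes e^3+e^4\otimes e^4$ is positive-definite.

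Then I would substitute this concrete $t$ into each of the six matrix families $D_1(t,c_2),\dots,D_6(t)$ from Subsection \ref{section:classification_every_dim}. For $D_1,D_4,D_5,D_6$ this is a direct substitution of $A(t)=\left(\begin{smallmatrix}0&\lambda\\-\lambda&0\end{smallmatrix}\right)$ into the displayed $4\times4$ block forms, yielding the four $5\times5$ matrices in \eqref{6d_iso_D1456}. For $D_2(t,x)$ and $D_3(t,x)$ one additionally needs the vector $v(x)\in V(t)$ of \eqref{eqn:vofx}; here the only nilpotent block requires $t=\Delta_0^+(0)$ (so $\lambda=0$ and $A(t)=0$), the set $N_t$ is the singleton $\{\Delta_0^+(0)\}$, and $v(\Delta_0^+(0))$ is the basis vector of $\C$, normalized so that $|h(v,v)|=1$, i.e.\ $v=1\in\C$; thus $v(x)=x\in\C$ for $x\in\R_{\geq0}$, which corresponds to $(x,0)^t\in\R^2$. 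Plugging in and using the observation from Theorem \ref{thm:classification} that in the $D_2$ family $v$ may be rescaled (dividing every entry by $x$ when $x\neq0$, which is the only case occurring since $x$ must be nonzero in $D_2,D_3$), one obtains $v(x)=(1,0)^t$ for $D_2$ and keeps $v(x)=(x,0)^t$ for $D_3$, giving exactly the two matrices of \eqref{6d_iso_D23}; the musical term $(J_Vv(x))^{\flat_V}$ is then just the appropriate row vector since $g_V$ is the identity.

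The proof is essentially bookkeeping, so the ``main obstacle'' is only making sure the dimension counts and normalizations are handled correctly: one must check that $p+q=1$ forces the single-block form, that the $(0,1)$ signature is genuinely redundant, and that the normalization $|h(v,v)|=1$ pins down $v$ up to the $\mathrm{U}(1)$-phase absorbed into the unitary frame. I would therefore structure the proof as: (1) apply Theorem \ref{thm:classification} and reduce to $t\in\mathcal{U}_{1,0}$; (2) identify $t=\Delta_0^+(i\lambda)$ and record $A(t)$, $g(t)$; (3) substitute into $D_1,D_4,D_5,D_6$ to get \eqref{6d_iso_D1456}; (4) for $D_2,D_3$ determine $v(x)$ via Lemma \ref{lemma:parametrizecomplement}, noting $t$ must be nilpotent, and substitute to get \eqref{6d_iso_D23}. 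This gives precisely the list in the statement, and the pseudo-Kähler structure \eqref{eqn:Jgisotropic6} together with the explicit $g(t)$ follows directly from the general formula for the pseudo-Kähler structure on $\frg_i(t,\dots)$ with $n=3$.
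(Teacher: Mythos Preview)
Your proposal is correct and follows exactly the argument the paper gives in the discussion preceding the proposition: invoke Theorem~\ref{thm:classification}, identify $\mathcal{U}_{1,0}=\{\Delta_0^+(i\lambda)\}$, substitute into $D_1,D_4,D_5,D_6$, and for $D_2,D_3$ compute $v(x)$ via Lemma~\ref{lemma:parametrizecomplement} with the single nilpotent block $\Delta_0^+(0)$. The only imprecision is calling the global sign change of $g$ ``an isometry that preserves $J$''; it is not an isometry, but the reduction from signature $(0,1)$ to $(1,0)$ is still the standard one the paper uses (explicitly in the non-isotropic $(0,2)$ case, implicitly here).
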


\subsection{Classification in dimension eight}\label{section:8d_classification}

Here we use the results of Subsection \ref{section:classification_every_dim} to obtain a classification of eight-dimensional almost abelian pseudo-Kähler Lie algebras. We proceed as in Subsection \ref{section:6d_classification}.\medskip

Suppose first that $\frh_0$ is non-isotropic. The almost abelian Lie algebra $\frg$ has to be isomorphic to $\g_0(t,a,\varepsilon)$, where $t\in\mathcal{U}_{p,q}$, with $p+q=3$, $a\in\R$ and $\varepsilon=\pm1$. The derivation $D$ takes the form $$D_0(t,a)=\begin{pmatrix}
    A(t)&0\\
    0&a
\end{pmatrix},$$ and $\frg_0(t,a,\varepsilon)$ is equipped with the following pseudo-Kähler structure: \begin{equation}\label{eqn:Jgnonisotropic8}
    Je_1=e_2,\,Je_3=e_4,\,Je_5=e_6,\,Je_7=e_8\quad\text{and}\quad g=g(t)+\varepsilon(e^7\otimes e^7+e^8\otimes e^8),
\end{equation} where $g(t)$ is the real part of the Hermitian metric $h(t)$. For the case $(p,q)=(3,0)$, let $\lambda_j\in\R$ and $\zeta_j=i\lambda_j$ for $j=1,2,3$. Then $$t=\Delta_0^+(\zeta_1)+\Delta_0^+(\zeta_2)+\Delta_0^+(\zeta_3)=\left(\begin{pmatrix}
    \zeta_1&0&0\\
    0&\zeta_2&0\\
    0&0&\zeta_3
\end{pmatrix},\C\oplus\C\oplus\C,\begin{pmatrix}
    1&0&0\\
    0&1&0\\
    0&0&1
\end{pmatrix}\right)$$ and \begin{equation}\label{8d_noniso_(3,0)}
    D=\begin{smallpmatrix}
        0 & \lambda_{1} & 0 & 0 & 0 & 0 & 0 \\
        -\lambda_{1} & 0 & 0 & 0 & 0 & 0 & 0 \\
        0 & 0 & 0 & \lambda_{2} & 0 & 0 & 0 \\
        0 & 0 & -\lambda_{2} & 0 & 0 & 0 & 0 \\
        0 & 0 & 0 & 0 & 0 & \lambda_{3} & 0 \\
        0 & 0 & 0 & 0 & -\lambda_{3} & 0 & 0 \\
        0 & 0 & 0 & 0 & 0 & 0 & a
    \end{smallpmatrix},\quad g(t)=\begin{smallpmatrix}
        1 & 0 & 0 & 0 & 0 & 0 \\
        0 & 1 & 0 & 0 & 0 & 0 \\
        0 & 0 & 1 & 0 & 0 & 0 \\
        0 & 0 & 0 & 1 & 0 & 0 \\
        0 & 0 & 0 & 0 & 1 & 0 \\
        0 & 0 & 0 & 0 & 0 & 1
    \end{smallpmatrix}.
\end{equation}

Consider now the case $(p,q)=(2,1)$. Here, as in the six-dimensional case, we also have different types $t\in\mathcal{U}$ such that $s(t)=(2,1)$: \begin{itemize}
    \item Let $\lambda_j\in\R$ and $\zeta_j=i\lambda_j$ for $j=1,2$. Then $$t=\Delta_1^+(\zeta_1)+\Delta_0^+(\zeta_2)=\left(\begin{pmatrix}
        \zeta_1&1&0\\
        0&\zeta_1&0\\
        0&0&\zeta_2
    \end{pmatrix},\C^2\oplus\C,\begin{pmatrix}
        0&i&0\\
        -i&0&0\\
        0&0&1
    \end{pmatrix}\right)$$ and \begin{equation}\label{8d_noniso_(2,1)-I}
        D=\left(\begin{smallmatrix}
        0 & \lambda_{1} & 1 & 0 & 0 & 0 & 0 \\
        -\lambda_{1} & 0 & 0 & 1 & 0 & 0 & 0 \\
        0 & 0 & 0 & \lambda_{1} & 0 & 0 & 0 \\
        0 & 0 & -\lambda_{1} & 0 & 0 & 0 & 0 \\
        0 & 0 & 0 & 0 & 0 & \lambda_{2} & 0 \\
        0 & 0 & 0 & 0 & -\lambda_{2} & 0 & 0 \\
        0 & 0 & 0 & 0 & 0 & 0 & a
    \end{smallmatrix}\right),\quad g(t)=\begin{smallpmatrix}
        0 & 0 & 0 & 1 & 0 & 0 \\
        0 & 0 & -1 & 0 & 0 & 0 \\
        0 & -1 & 0 & 0 & 0 & 0 \\
        1 & 0 & 0 & 0 & 0 & 0 \\
        0 & 0 & 0 & 0 & 1 & 0 \\
        0 & 0 & 0 & 0 & 0 & 1
    \end{smallpmatrix}.\end{equation}
    \item Let $\lambda_j\in\R$ and $\zeta_j=i\lambda_j$ for $j=1,2,3$. Then $$t=\Delta_0^+(\zeta_1)+\Delta_0^+(\zeta_2)+\Delta_0^-(\zeta_3)=\left(\begin{pmatrix}
        \zeta_1&0&0\\
        0&\zeta_2&0\\
        0&0&\zeta_3
    \end{pmatrix},\C\oplus\C\oplus\C,\begin{pmatrix}
        1&0&0\\
        0&1&0\\
        0&0&-1
    \end{pmatrix}\right)$$ and \begin{equation}\label{8d_noniso_(2,1)-II}
        D=\left(\begin{smallmatrix}
        0 & \lambda_{1} & 0 & 0 & 0 & 0 & 0 \\
        -\lambda_{1} & 0 & 0 & 0 & 0 & 0 & 0 \\
        0 & 0 & 0 & \lambda_{2} & 0 & 0 & 0 \\
        0 & 0 & -\lambda_{2} & 0 & 0 & 0 & 0 \\
        0 & 0 & 0 & 0 & 0 & \lambda_{3} & 0 \\
        0 & 0 & 0 & 0 & -\lambda_{3} & 0 & 0 \\
        0 & 0 & 0 & 0 & 0 & 0 & a
    \end{smallmatrix}\right),\quad g(t)=\begin{smallpmatrix}
        1 & 0 & 0 & 0 & 0 & 0 \\
        0 & 1 & 0 & 0 & 0 & 0 \\
        0 & 0 & 1 & 0 & 0 & 0 \\
        0 & 0 & 0 & 1 & 0 & 0 \\
        0 & 0 & 0 & 0 & -1 & 0 \\
        0 & 0 & 0 & 0 & 0 & -1
    \end{smallpmatrix}.\end{equation}
    \item Let $\rho_1>0$, $\lambda_1\in\R$ and $\zeta_1=\rho_1+i\lambda_1$; let $\lambda_2\in\R$ and $\zeta_2=i\lambda_2$. Then $$t=\Delta_0(\zeta_1,-\overline{\zeta}_1)+\Delta_0^+(\zeta_2)=\left(\begin{pmatrix}
        \zeta_1&0&0\\
        0&-\overline{\zeta}_1&0\\
        0&0&\zeta_2
    \end{pmatrix},\C\oplus\C\oplus\C,\begin{pmatrix}
        0&i&0\\
        -i&0&0\\
        0&0&1
    \end{pmatrix}\right)$$ and \begin{equation}\label{8d_noniso_(2,1)-III}
        D=\left(\begin{smallmatrix}
        \rho_{1} & \lambda_{1} & 0 & 0 & 0 & 0 & 0 \\
        -\lambda_{1} & \rho_{1} & 0 & 0 & 0 & 0 & 0 \\
        0 & 0 & -\rho_{1} & \lambda_{1} & 0 & 0 & 0 \\
        0 & 0 & -\lambda_{1} & -\rho_{1} & 0 & 0 & 0 \\
        0 & 0 & 0 & 0 & 0 & \lambda_{2} & 0 \\
        0 & 0 & 0 & 0 & -\lambda_{2} & 0 & 0 \\
        0 & 0 & 0 & 0 & 0 & 0 & a
    \end{smallmatrix}\right),\quad g(t)=\begin{smallpmatrix}
        0 & 0 & 0 & 1 & 0 & 0 \\
        0 & 0 & -1 & 0 & 0 & 0 \\
        0 & -1 & 0 & 0 & 0 & 0 \\
        1 & 0 & 0 & 0 & 0 & 0 \\
        0 & 0 & 0 & 0 & 1 & 0 \\
        0 & 0 & 0 & 0 & 0 & 1
    \end{smallpmatrix}.\end{equation}
    \item Let $\lambda\in\R$ and $\zeta=i\lambda$. Then $$t=\Delta_2^-(\zeta)=\left(\begin{pmatrix}
        \zeta&1&0\\
        0&\zeta&1\\
        0&0&\zeta
    \end{pmatrix},\C^3,\begin{pmatrix}
        0&0&-1\\
        0&1&0\\
        -1&0&0
    \end{pmatrix}\right)$$ and \begin{equation}\label{8d_noniso_(2,1)-IV}
        D=\left(\begin{smallmatrix}
        0 & \lambda & 1 & 0 & 0 & 0 & 0 \\
        -\lambda & 0 & 0 & 1 & 0 & 0 & 0 \\
        0 & 0 & 0 & \lambda & 1 & 0 & 0 \\
        0 & 0 & -\lambda & 0 & 0 & 1 & 0 \\
        0 & 0 & 0 & 0 & 0 & \lambda & 0 \\
        0 & 0 & 0 & 0 & -\lambda & 0 & 0 \\
        0 & 0 & 0 & 0 & 0 & 0 & a
    \end{smallmatrix}\right),\quad g(t)=\begin{smallpmatrix}
        0 & 0 & 0 & 0 & -1 & 0 \\
        0 & 0 & 0 & 0 & 0 & -1 \\
        0 & 0 & 1 & 0 & 0 & 0 \\
        0 & 0 & 0 & 1 & 0 & 0 \\
        -1 & 0 & 0 & 0 & 0 & 0 \\
        0 & -1 & 0 & 0 & 0 & 0
    \end{smallpmatrix}.\end{equation}
\end{itemize}

\begin{proposition}
    Let $(\frg=\frh\rtimes_D{\Span{e_8}},J,g)$ be an almost abelian pseudo-Kähler Lie algebra of dimension eight with $e_8$ non-isotropic. Then there is basis $e_1,\dotsc, e_7$ of $\frh$ such that $(J,g)$ takes the form \eqref{eqn:Jgnonisotropic8} and the derivation $D$ and the metric $g(t)$ are as in one of \eqref{8d_noniso_(3,0)}, \eqref{8d_noniso_(2,1)-I}, \eqref{8d_noniso_(2,1)-II}, \eqref{8d_noniso_(2,1)-III} or \eqref{8d_noniso_(2,1)-IV}.
\end{proposition}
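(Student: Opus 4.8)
The plan is to derive the statement as the eight-dimensional specialization, in the non-isotropic case, of the general classification of Theorem~\ref{thm:classification}. Since $\dim\frg=8$ forces $n=4$, Lemma~\ref{lemma:classification_non-isotropic} gives a unitary isomorphism between $(\frg,J,g)$ and $\frg_0(t,a,\varepsilon)$ for some $t\in\mathcal{U}_{p,q}$ with $p+q=n-1=3$, some $a\in\R$ and some $\varepsilon=\pm1$; by construction this Lie algebra carries the pseudo-Kähler structure \eqref{eqn:Jgnonisotropic8} with $D=D_0(t,a)=\operatorname{diag}(A(t),a)$. So after this reduction it only remains to run through the possible types $t$ and, for each, write out the real matrices $A(t)$ and $g(t)=\Re h(t)$. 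As in the six-dimensional discussion, replacing $g$ by $-g$ keeps $J$ fixed and the fundamental form closed, hence preserves the pseudo-Kähler condition while interchanging the signatures $(p,q)$ and $(q,p)$; so I may assume $p\geq q$, i.e.\ $(p,q)\in\{(3,0),(2,1)\}$.

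The next step is to enumerate $\mathcal{U}_{3,0}$ and $\mathcal{U}_{2,1}$ using Theorem~\ref{thm:orbits_U(p,q)} together with the signature map~\eqref{eq:signature_map}. The condition $s(t)=(p,q)$ forces $\dim_\C V(t)=p+q=3$, so the only blocks that can occur are $\Delta_m^\pm(\zeta)$ with $m\leq 2$ and $\Delta_0(\zeta,-\overline\zeta)$, and their complex dimensions must partition $3$. For $(p,q)=(3,0)$ the vanishing of the second coordinate of the signature rules out every block except $\Delta_0^+(\zeta)$ with $\zeta\in i\R$, which leaves $t=\Delta_0^+(\zeta_1)+\Delta_0^+(\zeta_2)+\Delta_0^+(\zeta_3)$ and hence \eqref{8d_noniso_(3,0)}. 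For $(p,q)=(2,1)$, running through the partitions $3=3$, $3=2+1$ and $3=1+1+1$ and using that a one-dimensional block has signature $(1,0)$ or $(0,1)$, a two-dimensional block has signature $(1,1)$, and the only three-dimensional block of signature $(2,1)$ is $\Delta_2^-(\zeta)$ with $\zeta\in i\R$, one finds exactly the decompositions $\Delta_2^-(\zeta)$; a two-dimensional block of signature $(1,1)$ (necessarily $\Delta_1^+(\zeta_1)$, $\Delta_1^-(\zeta_1)$ or $\Delta_0(\zeta_1,-\overline\zeta_1)$) together with $\Delta_0^+(\zeta_2)$; and $\Delta_0^+(\zeta_1)+\Delta_0^+(\zeta_2)+\Delta_0^-(\zeta_3)$. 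By Lemma~\ref{lemma:changesign} and the isomorphism $\frg_0(t,a,\varepsilon)\cong\frg_0(r(t),-a,\varepsilon)$ of Theorem~\ref{thm:classification}, the choice $\Delta_1^-(\zeta_1)$ produces a Lie algebra already covered by $\Delta_1^+(\zeta_1)$ (with $a$ and $\lambda_1$ negated), so it need not be listed separately; this leaves precisely the four types of \eqref{8d_noniso_(2,1)-I}--\eqref{8d_noniso_(2,1)-IV}.

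Finally, for each of these types I would realify the complex Jordan blocks $A_{\zeta,m}$ of~\eqref{eqn:jordanblock} — with $\zeta=i\lambda_j\in i\R$, except for the pair $\zeta_1=\rho_1+i\lambda_1$, $-\overline\zeta_1$ occurring in $\Delta_0(\zeta_1,-\overline\zeta_1)$ — assemble $D=\operatorname{diag}(A(t),a)$ in the real basis $e_1,\dots,e_7$ adapted to \eqref{eqn:Jgnonisotropic8}, and compute $g(t)=\Re h(t)$ from the explicit matrices $h_m$ of Subsection~\ref{section:orbits}; comparing these with the displayed matrices completes the proof. I expect the only genuine work to be the combinatorial bookkeeping of the second paragraph: checking that the listed block decompositions of signature $(2,1)$ are exhaustive and that nothing is lost by discarding $\Delta_1^-$. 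The realification and the evaluation of $\Re h_m$ are routine computations.
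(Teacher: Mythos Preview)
Your proposal is correct and follows essentially the same approach as the paper, which presents the proposition as a summary of the preceding case-by-case enumeration of $\mathcal{U}_{3,0}$ and $\mathcal{U}_{2,1}$ rather than giving a separate proof. Your treatment is in fact more explicit than the paper's on one point: you justify via the $r$-symmetry of Lemma~\ref{lemma:changesign} and Theorem~\ref{thm:classification} why the block $\Delta_1^-(\zeta_1)+\Delta_0^+(\zeta_2)$ need not be listed separately, whereas the paper simply omits it without comment.
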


We focus now on the isotropic case. The almost abelian Lie algebra $\frg$ has to be isomorphic to $\g_i(t,\dots)$, for $i\in\{1,\ldots,6\}$, where $t\in\mathcal{U}_{p,q}$, with $p+q=2$. The derivation $D$ takes the form of $D_i(t,\dots)$ for $i\in\{1,\ldots,6\}$; and $\frg_i(t,\dots)$ is equipped with the following pseudo-Kähler structure: $$Je_1=e_2,\,Je_3=e_4,\,Je_5=e_6,\,Je_7=e_8\quad\text{and}\quad g=e^1\odot e^8-e^2\odot e^7+g(t),$$ where $g(t)$ is the real part of the Hermitian metric $h(t)$. Since $V(t)$ is four-dimensional we have two possibilities: the metric $g(t)$ on $V(t)$ is positive-definite, i.e.\ $t\in\mathcal{U}_{2,0}$; or the metric $g(t)$ on $V(t)$ is of neutral signature, i.e.\ $t\in\mathcal{U}_{1,1}$.\medskip

Suppose first that the metric $g(t)$ is positive-definite, so that $g$ has signature $(6,2)$. Let $\zeta_j\in i\R$ for $j=1,2$ and let $$t_0=\Delta_0^+(\zeta_1)+\Delta_0^+(\zeta_2)\in\mathcal{U}_{2,0};$$ as we have already seen, $g(t_0)$ is as in \eqref{6d_noniso_(2,0)}. Hence the possible derivations $D$ here are $D_1(t_0,c_2)$, $D_4(t_0,c_1)$, $D_5(t_0)$ and $D_6(t_0)$, where $c_1,c_2\in\R$. For the matrices $D_2(t_0,x)$ and $D_3(t_0,x)$ we need to find the vector $v(x)\in V$ as in \eqref{eqn:vofx}. Note that in these two cases, $t_0\in\mathcal{U}_{2,0}$ must contain a nilpotent block, hence (up to interchanging $\zeta_1$ and $\zeta_2$) $\zeta_1=0$ and $t_0=\Delta_0^+(0)+\Delta_0^+(\zeta_2)$. So, the set of nilpotent blocks in $t_0$ is just $$N_{t_0}=\{\Delta_0^+(0)\},$$ and nonzero maps $x\colon N_t\to \R_{\geq0}$ can be identified with positive numbers $x=x(\Delta_0^+(0))$, i.e.\ $$v(x)=x v(\Delta_0^+(0))=\begin{pmatrix}
    x\\
    0\\
    0\\
    0
\end{pmatrix}\in\R^4.$$

Note that in the matrix $D_2$ we can rescale $x$ and flip the sign of $A$, so that $x=1$ and $\zeta_2=i\lambda$ with $\lambda\geq0$. Then the matrix that we obtain is as follows: \begin{equation}\label{8d_iso_positive_D2}
D_2(t_0,1)=\left(\begin{smallmatrix}
    0 & 0 & 0 & -1 & 0 & 0 & 0 \\
    0 & 0 & 1 & 0 & 0 & 0 & 0 \\
    0 & 0 & 0 & 0 & 0 & 0 & 1 \\
    0 & 0 & 0 & 0 & 0 & 0 & 0 \\
    0 & 0 & 0 & 0 & 0 & \lambda & 0 \\
    0 & 0 & 0 & 0 & -\lambda & 0 & 0 \\
    0 & 0 & 0 & 0 & 0 & 0 & 0
\end{smallmatrix}\right),\quad\lambda\geq0.\end{equation}

For $D_3$, there is no such symmetry, and we obtain: \begin{equation}\label{8d_iso_positive_D3}
D_3(t_0,x)=\left(\begin{smallmatrix}
    0 & 0 & 0 & -x & 0 & 0 & 0 \\
    0 & 0 & x & 0 & 0 & 0 & 1 \\
    0 & 0 & 0 & 0 & 0 & 0 & x \\
    0 & 0 & 0 & 0 & 0 & 0 & 0 \\
    0 & 0 & 0 & 0 & 0 & \lambda & 0 \\
    0 & 0 & 0 & 0 & -\lambda & 0 & 0 \\
    0 & 0 & 0 & 0 & 0 & 0 & 0
\end{smallmatrix}\right),\quad x\geq0,\quad\lambda\in\R.\end{equation}

\begin{proposition}
    Let $(\frg,J,g)$ be an almost abelian pseudo-Kähler Lie algebra of dimension eight and signature $(6,2)$ with $\frh_0$ isotropic. Then one of the following cases holds: \begin{itemize}
        \item $\frg$ is isomorphic to $\frg_1(t_0,c_2)$, $\frg_4(t_0,c_1)$, $\frg_5(t_0)$ or $\frg_6(t_0)$;
        \item $\frg\cong\frg_2(t_0,x)$ with $D_2(t_0,1)$ as in \eqref{8d_iso_positive_D2};
        \item $\frg\cong\frg_3(t_0,x)$ with $D_3(t_0,x)$ as in \eqref{8d_iso_positive_D3}.
    \end{itemize}
\end{proposition}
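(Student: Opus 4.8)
The plan is to obtain the statement as the dimension-eight, signature-$(6,2)$ specialization of Theorem~\ref{thm:classification}, following the same template used for dimension six in Subsection~\ref{section:6d_classification}.

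Since $\frh_0$ is isotropic and $\dim\frg=8$, Theorem~\ref{thm:classification} gives a unitary Lie algebra isomorphism from $(\frg,J,g)$ onto one of the $\frg_i(t,\dots)$, $i\in\{1,\dots,6\}$, where $t\in\mathcal{U}_{p,q}$ with $p+q=n-2=2$, carrying the fixed pseudo-Kähler structure of that family; in particular $J$ has the stated form and $g=e^1\odot e^8-e^2\odot e^7+g(t)$. The summands $e^1\odot e^8$ and $-e^2\odot e^7$ span two hyperbolic planes, contributing signature $(2,2)$; hence requiring $g$ to have signature $(6,2)$ forces $g(t)=\Re h(t)$, and therefore $h(t)$, to be positive definite, i.e.\ $t\in\mathcal{U}_{2,0}$.

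I would then determine $\mathcal{U}_{2,0}$ from the additive signature map $s$ of \eqref{eq:signature_map}: every block other than $\Delta_0^+(\zeta)$ with $\zeta\in i\R$ contributes a strictly positive amount to the second component of $s$ (the blocks $\Delta_{2k-1}^\varepsilon(\zeta)$ and $\Delta_m(\zeta,-\overline\zeta)$ have both signature entries positive, and $\Delta_{2k}^\varepsilon(\zeta)$ with $k\geq1$, or with $k=0$ and $\varepsilon=-1$, has positive second entry), whereas $s(\Delta_0^+(\zeta))=(1,0)$. So $t=t_0:=\Delta_0^+(\zeta_1)+\Delta_0^+(\zeta_2)$ with $\zeta_j=i\lambda_j\in i\R$, giving $A(t_0)=\operatorname{diag}(i\lambda_1,i\lambda_2)$ and $g(t_0)=\Id$ as in \eqref{6d_noniso_(2,0)}. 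Plugging $t=t_0$ into the families $D_1,D_4,D_5,D_6$ immediately realizes $\frg$ as $\frg_1(t_0,c_2)$, $\frg_4(t_0,c_1)$, $\frg_5(t_0)$ or $\frg_6(t_0)$, which is the first bullet.

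It remains to treat the families $i\in\{2,3\}$. Here $x$ must be a nonzero element of $X_{t_0}$, which forces $t_0$ to contain a nilpotent block, so $\lambda_1=0$ or $\lambda_2=0$; this reduces the analysis to $t_0=\Delta_0^+(0)+\Delta_0^+(0)$, for which $A(t_0)=0$ and the set $N_{t_0}$ of nilpotent blocks is explicit. I would compute the vector $v(x)$ of \eqref{eqn:vofx} in coordinates via Lemma~\ref{lemma:parametrizecomplement}, substitute it into $D_2(t_0,x)$ and $D_3(t_0,x)$, and then use the residual stabilizer freedom of Lemma~\ref{lemma:orbitsarbitrarydimension}: with $a=0$ and $C=\Id$ one has $v\mapsto v/x^{2}$ and $c_2\mapsto c_2/x^{3}$, so in $D_2(t_0,x)$ (where $c_2=0$) one may rescale a nonzero component of $x$ to $1$ and obtain the two matrices \eqref{8d_iso_positive_D2}, whereas in $D_3(t_0,x)$ the entry $c_2=1$ is rigid under this rescaling, no further reduction of $x$ is possible, and one obtains \eqref{8d_iso_positive_D3}. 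I expect this last step to be the main obstacle: keeping careful track of $N_{t_0}$, $X_{t_0}$, the explicit form of $v(x)$ in the chosen basis, and exactly which transformations \eqref{eqn:stabh} survive once $A(t_0)$ is in standard form, so as to pin down the normal forms \eqref{8d_iso_positive_D2} and \eqref{8d_iso_positive_D3} precisely.
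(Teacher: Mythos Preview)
Your approach is essentially identical to the paper's: the proposition is stated as a summary of the preceding discussion in Subsection~\ref{section:8d_classification}, which proceeds exactly as you outline—specializing Theorem~\ref{thm:classification} to the isotropic case with $p+q=2$, using the signature to force $t\in\mathcal{U}_{2,0}$, identifying $t_0=\Delta_0^+(\zeta_1)+\Delta_0^+(\zeta_2)$ as the only such type, and then, for $D_2,D_3$, computing $v(x)$ and normalizing via the rescaling $v\mapsto v/x^2$, $c_2\mapsto c_2/x^3$ from Lemma~\ref{lemma:orbitsarbitrarydimension}. You are in fact slightly more careful than the paper in justifying why no block other than $\Delta_0^+(\zeta)$ can occur in $\mathcal{U}_{2,0}$, and in pinpointing exactly which stabilizer transformation is being used for the final normalization.
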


Finally, suppose that the metric $g(t)$ is of neutral signature, so that $g$ is neutral too. As we have seen, there are three possibilities for $t\in\mathcal{U}_{1,1}$: \begin{itemize}
    \item $t_1=\Delta_1^+(\zeta)$ with $\zeta\in i\R$; and $g(t_1)$ as in \eqref{6d_noniso_(1,1)-I}.
    \item $t_2=\Delta_0^+(\zeta_1)+\Delta_0^-(\zeta_2)$ with $\zeta_1,\zeta_2\in i\R$; and $g(t_2)$ as in \eqref{6d_noniso_(1,1)-II}.
    \item $t_3=\Delta_0(\zeta,-\overline{\zeta})$ with $\zeta\in\C$ satisfying $\Re\zeta>0$; and $g(t_3)$ as in \eqref{6d_noniso_(1,1)-III}.
\end{itemize}

For each of these three cases, we can construct the corresponding matrices $D_1,D_4,D_5,D_6$. To construct the matrices $D_2$ and $D_3$ we need to consider the nilpotent blocks for each of the above cases. First note that $t_3$ has no nilpotent blocks, so $N_{t_3}=\varnothing$. Let us consider the other two cases: \begin{itemize}
    \item The set of nilpotent blocks of $t_1$ is $$N_{t_1}=\{\Delta_1^+(0)\}.$$

    Let $v=v(\Delta_1^+(0))$ be the last element in the standard basis of the vector space $V(\Delta_1^+(0))=\C^2$, so that $V(\Delta_1^+(0))=\Span{A(v),v}$. Since $\abs{h(A(v),v)}=1$, we can simply take $$v=\begin{pmatrix}
        0\\1
    \end{pmatrix}\in\C^2.$$

    Let $x=x(\Delta_1^+(0))\in\R_{\geq0}$. Then $$v(x)=\sum_{\Delta\in N_{t_1}}x(\Delta)v(\Delta)=x(\Delta_1^+(0))v(\Delta_1^+(0))=\begin{pmatrix}
        0\\0\\x\\0
    \end{pmatrix}\in\R^4.$$

    Therefore the matrices $D_2$ and $D_3$ are as follows: \begin{equation}\label{8d_iso_neutral_t1_D23}
    D_2(t_1,x)=\left(\begin{smallmatrix}
        0 & 0 & 0 & 0 & 0 & x & 0 \\
        0 & 0 & 0 & 0 & -x & 0 & 0 \\
        0 & 0 & 0 & 0 & 1 & 0 & 0 \\
        0 & 0 & 0 & 0 & 0 & 1 & 0 \\
        0 & 0 & 0 & 0 & 0 & 0 & x \\
        0 & 0 & 0 & 0 & 0 & 0 & 0 \\
        0 & 0 & 0 & 0 & 0 & 0 & 0
    \end{smallmatrix}\right),\quad D_3(t_1,x)=\left(\begin{smallmatrix}
        0 & 0 & 0 & 0 & 0 & x & 0 \\
        0 & 0 & 0 & 0 & -x & 0 & 1 \\
        0 & 0 & 0 & 0 & 1 & 0 & 0 \\
        0 & 0 & 0 & 0 & 0 & 1 & 0 \\
        0 & 0 & 0 & 0 & 0 & 0 & x \\
        0 & 0 & 0 & 0 & 0 & 0 & 0 \\
        0 & 0 & 0 & 0 & 0 & 0 & 0
    \end{smallmatrix}\right).\end{equation}
    \item The set of nilpotent blocks of $t_2$ is $$N_{t_2}=\{\Delta_0^+(0),\Delta_0^-(0)\}.$$

    Let $v^\pm=v(\Delta_0^\pm(0))$ be the last element in the standard basis of the vector space $V(\Delta_0^\pm(0))=\C$, so that $V(\Delta_0^\pm(0))=\Span{v^\pm}$. Since $\abs{h(v^\pm,v^\pm)}=1$, we just take $v^\pm=1\in\C$. Let $x^\pm=x(\Delta_0^\pm(0))\in\R_{\geq0}$. Then $$v(x)=\sum_{\Delta\in N_{t_2}}x(\Delta)v(\Delta)=x(\Delta_0^+(0))v(\Delta_0^+(0))+x(\Delta_0^-(0))v(\Delta_0^-(0))=\begin{pmatrix}
        x^+\\0\\x^-\\0
    \end{pmatrix}\in\R^4.$$

    Recall that the maps $x\colon N_t\to\R_{\geq0}$ satisfy the condition \eqref{eq:condition_Xt}. Therefore, if $x^-=0$, we can divide each entry of $D_2$ by $x^+$, so the matrices $D_2$ and $D_3$ are as follows: \begin{equation}\label{8d_iso_neutral_t2_D23_I}
    D_2(t_2,(1,0))=\left(\begin{smallmatrix}
        0 & 0 & 0 & -1 & 0 & 0 & 0 \\
        0 & 0 & 1 & 0 & 0 & 0 & 0 \\
        0 & 0 & 0 & 0 & 0 & 0 & 1 \\
        0 & 0 & 0 & 0 & 0 & 0 & 0 \\
        0 & 0 & 0 & 0 & 0 & 0 & 0 \\
        0 & 0 & 0 & 0 & 0 & 0 & 0 \\
        0 & 0 & 0 & 0 & 0 & 0 & 0
    \end{smallmatrix}\right),\quad D_3(t_2,(x^+,0))=\left(\begin{smallmatrix}
        0 & 0 & 0 & -x^+ & 0 & 0 & 0 \\
        0 & 0 & x^+ & 0 & 0 & 0 & 1 \\
        0 & 0 & 0 & 0 & 0 & 0 & x^+ \\
        0 & 0 & 0 & 0 & 0 & 0 & 0 \\
        0 & 0 & 0 & 0 & 0 & 0 & 0 \\
        0 & 0 & 0 & 0 & 0 & 0 & 0 \\
        0 & 0 & 0 & 0 & 0 & 0 & 0
    \end{smallmatrix}\right).\end{equation}

    If $x^+=0$, we can divide each entry of $D_2$ by $x^-$, so the matrices $D_2$ and $D_3$ are as follows: \begin{equation}\label{8d_iso_neutral_t2_D23_II}
    D_2(t_2,(0,1))=\left(\begin{smallmatrix}
        0 & 0 & 0 & 0 & 0 & 1 & 0 \\
        0 & 0 & 0 & 0 & -1 & 0 & 0 \\
        0 & 0 & 0 & 0 & 0 & 0 & 0 \\
        0 & 0 & 0 & 0 & 0 & 0 & 0 \\
        0 & 0 & 0 & 0 & 0 & 0 & 1 \\
        0 & 0 & 0 & 0 & 0 & 0 & 0 \\
        0 & 0 & 0 & 0 & 0 & 0 & 0
    \end{smallmatrix}\right),\quad D_3(t_2,(0,x^-))=\left(\begin{smallmatrix}
        0 & 0 & 0 & 0 & 0 & x^- & 0 \\
        0 & 0 & 0 & 0 & -x^- & 0 & 1 \\
        0 & 0 & 0 & 0 & 0 & 0 & 0 \\
        0 & 0 & 0 & 0 & 0 & 0 & 0 \\
        0 & 0 & 0 & 0 & 0 & 0 & x^- \\
        0 & 0 & 0 & 0 & 0 & 0 & 0 \\
        0 & 0 & 0 & 0 & 0 & 0 & 0
    \end{smallmatrix}\right).\end{equation}

    If $x^+=1=x^-$, the matrices $D_2$ and $D_3$ are as follows: \begin{equation}\label{8d_iso_neutral_t2_D23_III}
    D_2(t_2,(1,1))=\left(\begin{smallmatrix}
        0 & 0 & 0 & -1 & 0 & 1 & 0 \\
        0 & 0 & 1 & 0 & -1 & 0 & 0 \\
        0 & 0 & 0 & 0 & 0 & 0 & 1 \\
        0 & 0 & 0 & 0 & 0 & 0 & 0 \\
        0 & 0 & 0 & 0 & 0 & 0 & 1 \\
        0 & 0 & 0 & 0 & 0 & 0 & 0 \\
        0 & 0 & 0 & 0 & 0 & 0 & 0
    \end{smallmatrix}\right),\quad D_3(t_2,(1,1))=\left(\begin{smallmatrix}
        0 & 0 & 0 & -1 & 0 & 1 & 0 \\
        0 & 0 & 1 & 0 & -1 & 0 & 1 \\
        0 & 0 & 0 & 0 & 0 & 0 & 1 \\
        0 & 0 & 0 & 0 & 0 & 0 & 0 \\
        0 & 0 & 0 & 0 & 0 & 0 & 1 \\
        0 & 0 & 0 & 0 & 0 & 0 & 0 \\
        0 & 0 & 0 & 0 & 0 & 0 & 0
    \end{smallmatrix}\right).\end{equation}
\end{itemize}

\begin{proposition}
    Let $(\frg,J,g)$ be an almost abelian pseudo-Kähler Lie algebra of dimension eight and neutral signature with $\frh_0$ isotropic. Then one of the following cases holds: \begin{itemize}
        \item $\frg$ is isomorphic to $\frg_1(t_j,c_2)$, $\frg_4(t_j,c_1)$, $\frg_5(t_j)$ or $\frg_6(t_j)$ for $j=1,2,3$;
        \item $\frg\cong\frg_i(t_1,x)$, $i=1,2$, with $D_i(t_1,x)$ as in \eqref{8d_iso_neutral_t1_D23};
        \item $\frg\cong\frg_i(t_2,x)$, $i=1,2$, with $D_i(t_2,x)$ as in \eqref{8d_iso_neutral_t2_D23_I}, \eqref{8d_iso_neutral_t2_D23_II} or \eqref{8d_iso_neutral_t2_D23_III}.
    \end{itemize}
\end{proposition}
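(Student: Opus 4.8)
The plan is to obtain the statement as a specialization of the general classification, Theorem~\ref{thm:classification}, via a signature count followed by an enumeration of the relevant orbits. Since $\frh_0$ is isotropic, Theorem~\ref{thm:classification} gives a unitary Lie algebra isomorphism from $\frg$ onto some $\frg_i(t,\dots)$, $i\in\{1,\dots,6\}$, with $t\in\mathcal{U}_{p,q}$ and $p+q=n-2=2$. The pseudo-Kähler metric on $\frg_i(t,\dots)$ is $g=e^1\odot e^{2n}-e^2\odot e^{2n-1}+g(t)$; the first two summands form a pair of hyperbolic planes, i.e.\ a form of signature $(2,2)$, so $g$ is of neutral signature $(4,4)$ precisely when $g(t)$ has signature $(2,2)$, that is $t\in\mathcal{U}_{1,1}$ (the alternative $t\in\mathcal{U}_{2,0}$ being the signature-$(6,2)$ case of the preceding proposition). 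I would record this reduction first.

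Next I would enumerate $\mathcal{U}_{1,1}$. Here $V(t)$ is a complex vector space of complex dimension two carrying a Hermitian form of signature $(1,1)$, so by Theorem~\ref{thm:orbits_U(p,q)} --- exactly as in the six-dimensional non-isotropic analysis leading to \eqref{6d_noniso_(1,1)-I}, \eqref{6d_noniso_(1,1)-II}, \eqref{6d_noniso_(1,1)-III} --- the possible types are $t_1=\Delta_1^+(\zeta)$ with $\zeta\in i\R$, $t_2=\Delta_0^+(\zeta_1)+\Delta_0^-(\zeta_2)$ with $\zeta_1,\zeta_2\in i\R$ and $t_3=\Delta_0(\zeta,-\overline\zeta)$ with $\Re\zeta>0$; in each case $A(t_j)$ and $g(t_j)$ are as displayed there.

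It then remains to substitute these types into the six families $D_1,\dots,D_6$ of Subsection~\ref{section:classification_every_dim}. The families $D_1,D_4,D_5,D_6$ depend only on $A(t)$ and the scalars $c_1,c_2$, so they yield the Lie algebras $\frg_1(t_j,c_2),\frg_4(t_j,c_1),\frg_5(t_j),\frg_6(t_j)$ of the first bullet, for $j=1,2,3$. For the families $\frg_2,\frg_3$ a nonzero $x\in X_t$ is needed, which requires $t$ to contain a nilpotent block; hence these do not occur for $t_3$ (for which $N_{t_3}=\varnothing$), while for $t_1,t_2$ they force $\zeta=\zeta_1=\zeta_2=0$. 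For $t_1$ one has $N_{t_1}=\{\Delta_1^+(0)\}$, so $x$ is a single positive real, $v(x)$ is given by \eqref{eqn:vofx}, and inserting it together with $A(t_1)$ into $D_2$ and $D_3$ --- evaluating the flats $v(x)^{\flat_V}$ and $(J_Vv(x))^{\flat_V}$ with respect to $g(t_1)$ --- produces \eqref{8d_iso_neutral_t1_D23}; since $A(t_1)\neq0$, the rescaling $\frg_2(t,x)\cong\frg_2(t,k^2x)$ of Theorem~\ref{thm:classification} is trivial, so $x$ is retained. For $t_2$ one has $N_{t_2}=\{\Delta_0^+(0),\Delta_0^-(0)\}$, so by \eqref{eq:condition_Xt} a nonzero $x$ is of one of the three types $(x^+,0)$, $(0,x^-)$, $(1,1)$, and $A(t_2)=0$; in the family $\frg_2$ the scaling freedom normalizes $x^\pm$ to $1$, giving $D_2$ as in \eqref{8d_iso_neutral_t2_D23_I}, \eqref{8d_iso_neutral_t2_D23_II}, \eqref{8d_iso_neutral_t2_D23_III}, whereas in $\frg_3$ the fixed entry $c_2=1$ blocks that rescaling (so $x^\pm$ survive) while still permitting $c_1$ to be set to zero, giving the corresponding $D_3$. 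Carrying out these substitutions exhausts the list.

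The substance of the proof lies in Theorem~\ref{thm:classification}; what remains is bookkeeping, and I expect the main (if minor) obstacle to be twofold: first, pinning down $\mathcal{U}_{1,1}$ in complex dimension two to exactly $t_1,t_2,t_3$, which (as in the lower-dimensional analyses) involves noticing that the size-two block can be normalized up to the relevant equivalences; and second, the explicit evaluation of the musical images against the indefinite forms $g(t_1)$ and $g(t_2)$, together with the accounting of the scaling freedom that distinguishes $\frg_2$ from $\frg_3$ --- computations in which sign conventions are easy to mishandle.
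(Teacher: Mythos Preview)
Your proposal is correct and follows essentially the same approach as the paper: the proposition has no standalone proof in the paper, and the discussion immediately preceding it \emph{is} the argument you outline---reducing to $t\in\mathcal{U}_{1,1}$ by the signature count, reusing the enumeration $t_1,t_2,t_3$ from the six-dimensional non-isotropic case, and then specializing the families $D_1,\dots,D_6$ with the nilpotent-block analysis for $D_2,D_3$. Your handling of the rescaling freedom (trivial for $t_1$ since $A(t_1)\neq0$; available for $\g_2$ but not $\g_3$ when $t=t_2$) matches the paper's implicit reasoning exactly.
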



\section{Almost abelian Lie algebras admitting a pseudo-Kähler structure}\label{section:complex_PK}

In this section we consider pseudo-Kähler Lie algebras up to Lie algebra isomorphism, rather than unitary isomorphism, and classify the almost abelian Lie algebras that admit a pseudo-Kähler structure. We compare with the classification of \cite{ArroyoBarberisDiazGodoyHernandez} to obtain a classification of almost abelian Lie algebras that admit both a complex and a symplectic structure but no pseudo-Kähler structure.\medskip

We already know that $\R^{2n-3}\oplus\mathfrak{heis}_3$ has a pseudo-Kähler structure, so this problem reduces to classifying the conjugacy classes of the derivations $D$ that appear in the families $D_0,\dotsc, D_6$. In fact, the conjugacy classes of derivations $D$ such that $\frg=\R^{2n-1}\rtimes_D\R$ has a complex structure were determined in \cite{ArroyoBarberisDiazGodoyHernandez}. After recalling this result and restating it in our language, we will characterize the conjugacy classes of derivations $D$ that admit a pseudo-Kähler structure, and deduce that every nilpotent almost abelian Lie algebra admitting a complex structure also admits a pseudo-Kähler structure.\medskip

It will be convenient to introduce ad hoc notation analogous to the one of Subsection \ref{section:orbits}, whilst making contact with the notation of \cite{ArroyoBarberisDiazGodoyHernandez}. We introduce the semigroup parametrizing adjoint orbits of each $\gl(k,\R)$, $k\geq1$. This semigroup will be denoted by $\mathcal{GL}$; its generators are pairs $(V,X)$, with $V$ a real vector space and $X\colon V\to V$ an $\R$-linear map. Given $\alpha\in\R$, denote by $\mathcal{J}_{m+1}(\alpha)$ the real part of the complex matrix $A_{\alpha,m}$ of \eqref{eqn:jordanblock}, and set $$J_m(\alpha):=(\R^{m+1},\mathcal{J}_{m+1}(\alpha));$$ the shift in the index is due to conflicting notation between \cite{ArroyoBarberisDiazGodoyHernandez} and \cite{Burgoyne_Cushman_1977}. Given $\zeta\in\C\setminus\R$, let $\mathcal{C}_{m+1}(\zeta)$ be the real matrix corresponding to $A_{\zeta,m}$ under the isomorphism $\C^{m+1}\cong\R^{2m+2}$, and set $$C_m(\zeta):=\begin{cases}
    (\R^{2m+2},\mathcal{C}_{m+1}(\zeta))&\text{if }\im\zeta>0,\\
    (\R^{2m+2},\mathcal{C}_{m+1}(\overline\zeta))&\text{if }\im\zeta<0;
\end{cases}$$ since the complexification of $\mathcal{C}_{m+1}(\zeta)$ has eigenvalues $\zeta,\overline\zeta$ and $\mathcal{C}_{m+1}(\zeta)$, $\mathcal{C}_{m+1}(\overline\zeta)$ are in the same adjoint orbit, this definition ensures that $C_m(\zeta)$, $C_m(\eta)$ coincide if and only if they correspond to matrices in the same adjoint orbit. We define $\mathcal{GL}$ as the free abelian semigroup generated by the $J_m(\alpha)$ and $C_m(\zeta)$, with $\alpha\in\R$, $\zeta\in\C\setminus\R$, $m\geq0$.\medskip

The analogous of Theorem \ref{thm:orbits_U(p,q)} is the Jordan decomposition. To every element $X$ of $\gl(k,\R)$, we can associate the formal sum $j(X)$ of the $J_m(\alpha)$ and $C_m(\zeta)$ that appear in the Jordan decomposition of $X$, taken with multiplicity; $j(X)$ identifies the adjoint orbit in which $X$ lies, and will be called the \emph{Jordan type} of $X$. We will also consider the Jordan type $j(X)$ of matrices in $\gl(k,\C)$, which is also an element of $\mathcal{GL}$, obtained by the identification $\C^k\cong \R^{2k}$; for instance, if $X$ contains a single complex Jordan block with eigenvalue $\zeta$, $j(X)$ equals $2J_m(\zeta)$ or $C_m(\zeta)$ accordingly to whether $\zeta$ is real or not. The semigroup generated by $2J_m(\alpha)$ and $C_m(\zeta)$ will be denoted by $\mathcal{GL}_\C$; a matrix in $\gl(2k,\R)$ commutes with some complex structure on $\R^{2k}$ if and only if its Jordan type lies in $\mathcal{GL}_\C$.\medskip

Given matrices $M_1,\dots, M_k$, write $$M_1\oplus\cdots\oplus M_k:=\begin{pmatrix}
    M_1&\\
    &\ddots\\
    &&M_k
\end{pmatrix}.$$

In \cite{ArroyoBarberisDiazGodoyHernandez}, the generic nilpotent matrix is written as \begin{equation}\label{eqn:genericnilpotent}
    M=\mathcal{J}_{n_1}^{\oplus{p_1}}(0)\oplus\dots\oplus\mathcal{J}_{n_k}^{\oplus{p_k}}(0)\oplus 0_\tau, \quad n_1>\cdots >n_k\geq 2,\, p_i\geq 1,\, \tau\geq 0,
\end{equation} where $0_\tau$ denotes the zero square matrix of order $\tau$. In our language $$j(M)=p_1J_{n_1-1}(0) + \cdots + p_kJ_{n_k-1}(0) + \tau J_0(0).$$

\begin{theorem}[{\cite[Theorem 4.10]{ArroyoBarberisDiazGodoyHernandez}}]\label{thm:AA_complex}
    Let $\g=\R^{2n-1}\rtimes_D\R$ be a nilpotent almost abelian Lie algebra, with $D$ as in \eqref{eqn:genericnilpotent}. Then $\g$ admits a complex structure if and only if one of the following holds: \begin{enumerate}
        \item $\tau$ is odd and $p_1,\dotsc, p_k$ are even;
        \item $\tau$ is odd, $p_1,\dotsc, p_{k-1}$ are even, $n_k=2$ and $p_k$ is odd;
        \item $\tau$ is even, $k\geq 2$, there exists $2\leq\ell\leq k$ such that $n_{\ell-1}=n_\ell+1$, and all the $p_i$ are even except $p_{\ell-1}$, $p_\ell$, which are odd.
    \end{enumerate}
\end{theorem}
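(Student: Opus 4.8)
The plan is to reduce the statement to a purely combinatorial question about nilpotent Jordan types, via the standard criterion for the existence of a complex structure on an almost abelian Lie algebra. Write $\g=\h\rtimes_D\R$ with $\h=\R^{2n-1}$ abelian. Since $\dim\h$ is odd, no almost complex structure $J$ can preserve $\h$, so $\h_1:=\h\cap J\h$ is a $J$-invariant hyperplane of $\h$; repeating the Nijenhuis computation from the proof of Theorem \ref{thm:characterization_non-isotropic_case} (this is \cite{LRv15}) shows that $J$ is integrable if and only if $\h_1$ is $D$-invariant and $J|_{\h_1}$ commutes with $D|_{\h_1}$, while conversely any $D$-invariant hyperplane $\h_1\subseteq\h$ carrying a complex structure $J_1$ with $[D|_{\h_1},J_1]=0$ extends to an integrable $J$ on $\g$. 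Hence $\g$ admits a complex structure if and only if some $D$-invariant hyperplane $\h_1$ of $\h$ is such that $D|_{\h_1}$ commutes with a complex structure; and since $D$ is nilpotent, by the description of $\mathcal{GL}_\C$ in the text (a $\C$-linear nilpotent, read over $\R$, doubles each Jordan block) this last condition says exactly that every Jordan block size of $D|_{\h_1}$ occurs with even multiplicity.

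The combinatorial core is then to identify the possible Jordan types of $D|_{\h_1}$. Record the block sizes of $D$ as the partition $\lambda=(n_1^{p_1},\dots,n_k^{p_k},1^{\tau})$ (exponents are multiplicities), which by \eqref{eqn:genericnilpotent} has $n_1>\dots>n_k\ge2$. Since $D$ is nilpotent, every $D$-invariant hyperplane of $\h$ contains $\im D$, and conversely every hyperplane containing $\im D$ is $D$-invariant. I would prove that, as $\h_1$ ranges over these hyperplanes, the Jordan type of $D|_{\h_1}$ ranges exactly over the partitions obtained from $\lambda$ by replacing one part of some size $d$ occurring in $\lambda$ by $d-1$ (and dropping it when $d=1$). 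Achievability of each such type is immediate: take $\h_1$ to be the span of all Jordan chains of $D$ except for the top vector of one chain of length $d$. For the converse, set $\epsilon_j:=\dim\ker D^j-\dim(\ker D^j\cap\h_1)$; one checks that $(\epsilon_j)$ is a non-decreasing $\{0,1\}$-valued sequence with $\epsilon_0=0$ and $\sum_j(\epsilon_j-\epsilon_{j-1})=1$, so the conjugate partition of $D|_{\h_1}$ differs from that of $\lambda$ in a single entry, by exactly one, which forces the claimed single-box decrement.

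It then remains to determine for which $\lambda$ some admissible decrement has all multiplicities even. Decrementing a part of size $1$ (possible iff $\tau\ge1$) gives $(n_1^{p_1},\dots,n_k^{p_k},1^{\tau-1})$, all even iff $\tau$ is odd and all $p_i$ are even, which is case (1). Decrementing a part of size $2$ (which can only be $n_k$, since $n_1>\dots>n_k\ge2$) gives $(n_1^{p_1},\dots,n_{k-1}^{p_{k-1}},2^{p_k-1},1^{\tau+1})$, all even iff $\tau$ is odd, $p_1,\dots,p_{k-1}$ are even and $p_k$ is odd, which is case (2). Decrementing a part of size $n_\ell$ with $n_{\ell-1}=n_\ell+1$ (so $2\le\ell\le k$, forcing $k\ge2$) raises the multiplicity of $n_{\ell-1}$ to $p_{\ell-1}+1$ and lowers that of $n_\ell$ to $p_\ell-1$, so it is all even iff $\tau$ is even, $p_{\ell-1}$ and $p_\ell$ are odd and the remaining $p_i$ are even, which is case (3). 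Any other decrement sends a part $n_i$ to a fresh value $n_i-1\ge2$, creating a part of multiplicity $1$, so it never works; the degenerate multiplicities ($p_k=1$, $\tau=0$, $k=1$, $n_k\ge3$) are absorbed by the same bookkeeping, and the three cases are pairwise exclusive by their parity constraints on $\tau$. Assembling these with the reduction of the first paragraph yields the equivalence.

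I expect the main obstacle to be the combinatorial lemma of the second paragraph — establishing precisely which single-box decrements of a nilpotent Jordan type are realized by codimension-one invariant subspaces — together with carrying out the case analysis without parity slips and without mishandling the vanishing-multiplicity edge cases; the rest is routine once the integrability criterion is in place.
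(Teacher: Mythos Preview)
The paper does not prove this statement; it is quoted verbatim from \cite[Theorem~4.10]{ArroyoBarberisDiazGodoyHernandez} and used as a black box. There is thus nothing in the paper to compare your proposal against, but on its own merits your argument is essentially correct and is the natural route: reduce via the integrability criterion of \cite{LRv15} to the existence of a $D$-invariant hyperplane on which $D$ has doubled Jordan type, then classify the hyperplane restrictions combinatorially.

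There is, however, one genuine slip in your case~(3). You write ``decrementing a part of size $n_\ell$ with $n_{\ell-1}=n_\ell+1$ raises the multiplicity of $n_{\ell-1}$ to $p_{\ell-1}+1$ and lowers that of $n_\ell$ to $p_\ell-1$''. This is backwards: decrementing a part of size $s$ lowers the multiplicity of $s$ and raises that of $s-1$, and since $n_{\ell-1}>n_\ell$ you cannot reach $n_{\ell-1}$ by decrementing at $n_\ell$. What you actually need is to decrement a part of size $n_{\ell-1}$; this lowers the multiplicity of $n_{\ell-1}$ to $p_{\ell-1}-1$ and raises that of $n_\ell=n_{\ell-1}-1$ to $p_\ell+1$, and both are even precisely when $p_{\ell-1}$ and $p_\ell$ are odd, the remaining $p_i$ are even, and $\tau$ is even. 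With this correction your exhaustion of the admissible decrements is complete: size~$1$ gives case~(1), size $n_k=2$ gives case~(2), size $n_{\ell-1}$ with $n_{\ell-1}=n_\ell+1$ gives case~(3), and any other decrement creates a part of multiplicity one.

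Your combinatorial lemma in the second paragraph is fine. The $\epsilon_j$ argument shows the conjugate partition drops by one in a single position, and the fact that this position must correspond to a size actually present in $\lambda$ is automatic, since $\dim\ker(D|_{\h_1})^j-\dim\ker(D|_{\h_1})^{j-1}$ is the conjugate partition of an honest nilpotent endomorphism and is therefore non-increasing.
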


In the same paper, the authors show:

\begin{theorem}[{\cite[Theorem 4.17]{ArroyoBarberisDiazGodoyHernandez}}]\label{thm:AA_complex_general}
    Let $\g=\R^{2n-1}\rtimes_D\R$ be an almost abelian Lie algebra. Then $\g$ admits a complex structure if and only if $D$ has a real eigenvalue $\alpha$ such that $$D=M(\alpha)\quad\text{or}\quad D=M(\alpha)\oplus Q,$$ where $M(\alpha)-\alpha\Id\in\End(\R^{2n-1-2m})$ satisfies the conditions of Theorem \ref{thm:AA_complex} and $Q\in\End(\R^{2m})$ commutes with a complex structure on $\R^{2m}$ and does not have $\alpha$ as an eigenvalue.
\end{theorem}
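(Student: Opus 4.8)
The plan is to reduce the general statement to the nilpotent case of Theorem~\ref{thm:AA_complex}, by splitting the derivation along the generalized eigenspace of a suitable real eigenvalue. The technical heart is an integrability criterion valid for an arbitrary (not necessarily metric-compatible) complex structure $J$ on $\g=\h\rtimes_D\R$: since $\dim\h=2n-1$ is odd, $\h$ cannot be $J$-invariant, so $\h_1:=\h\cap J\h$ is a $J$-invariant hyperplane of $\h$; choosing $e_{2n-1}\in\h$ spanning a complement of $\h_1$ in $\h$ and setting $e_{2n}:=Je_{2n-1}$, one checks $e_{2n}\notin\h$, so that $\g=\h_1\oplus\Span{e_{2n-1}}\oplus\Span{e_{2n}}$ and, in the splitting $\h=\h_1\oplus\Span{e_{2n-1}}$, the derivation takes the form $D=\ad(e_{2n})|_{\h}=\begin{smallpmatrix}A&v\\ w^t&\alpha\end{smallpmatrix}$. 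Evaluating the Nijenhuis tensor on the pairs $(X,e_{2n-1})$ and $(X,e_{2n})$ for $X\in\h_1$, and on $(e_{2n-1},e_{2n})$ --- exactly as in the proof of Theorem~\ref{thm:characterization_non-isotropic_case}, and following \cite{LRv15} --- one finds that $J$ is integrable if and only if $w=0$ and $[A,J_1]=0$, where $J_1:=J|_{\h_1}$.

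For necessity I would start from a complex structure $J$, so that $D=\begin{smallpmatrix}A&v\\ 0&\alpha\end{smallpmatrix}$ is block upper triangular; then $\alpha\in\R$ is the eigenvalue of $D$ acting on $\h/\h_1$, in particular a real eigenvalue of $D$, and it is the $\alpha$ of the statement. Since $A$ commutes with $J_1$, its Jordan type lies in $\mathcal{GL}_\C$, so each generalized eigenspace of $A$ at a real eigenvalue is $J_1$-invariant, hence even-dimensional. Write $\h_1=\h_1^{\alpha}\oplus\h_1'$, with $\h_1^{\alpha}$ the generalized $\alpha$-eigenspace of $A$ and $\h_1'$ the sum of the remaining ones; both are $A$- and $J_1$-invariant, $A-\alpha\Id$ is invertible on $\h_1'$, and $\dim\h_1'=:2m$ is even. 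Decomposing $v=v_\alpha+v'$ accordingly and replacing $e_{2n-1}$ by $e_{2n-1}+(A-\alpha\Id)^{-1}v'\in\h$, the component $v'$ is absorbed, so that $D=M(\alpha)\oplus Q$, where $M(\alpha)$ acts on $\h_1^{\alpha}\oplus\Span{e_{2n-1}}\cong\R^{2n-1-2m}$ with $M(\alpha)-\alpha\Id$ nilpotent, and $Q:=A|_{\h_1'}\in\End(\R^{2m})$ commutes with $J_1|_{\h_1'}$ and does not admit $\alpha$ as an eigenvalue. Finally, $\h_1^{\alpha}\oplus\Span{e_{2n-1},e_{2n}}$ is a $J$-invariant subalgebra of $\g$ isomorphic to $(\h_1^{\alpha}\oplus\Span{e_{2n-1}})\rtimes_{M(\alpha)}\Span{e_{2n}}$, hence it inherits a complex structure; as adding $\alpha\Id$ to a derivation changes neither the invariance of $\h_1$ nor the commutation with $J_1$, the nilpotent almost abelian Lie algebra with derivation $M(\alpha)-\alpha\Id$ also carries one, and Theorem~\ref{thm:AA_complex} forces $M(\alpha)-\alpha\Id$ to be of one of the three listed types. (When $\h_1'=0$ this reads $D=M(\alpha)$; when $\alpha$ is not an eigenvalue of $A$ the decomposition is still valid, $M(\alpha)$ reducing to the $1\times1$ block $(\alpha)$.)

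For sufficiency I would reverse this. Given $D=M(\alpha)$ or $D=M(\alpha)\oplus Q$ as in the statement, Theorem~\ref{thm:AA_complex} provides a complex structure $J_0$ on the nilpotent almost abelian Lie algebra with derivation $M(\alpha)-\alpha\Id$; its maximal $J_0$-invariant hyperplane $K$ of the abelian ideal is $(M(\alpha)-\alpha\Id)$-invariant, hence $M(\alpha)$-invariant, and by the same insensitivity to $+\alpha\Id$, $J_0$ is still integrable for the derivation $M(\alpha)$. Picking a complex structure $J_Q$ on $\R^{2m}$ commuting with $Q$, the pair consisting of $J_0\oplus J_Q$ and the hyperplane $K\oplus\R^{2m}$ of the abelian ideal meets the hypotheses of the criterion above --- $K\oplus\R^{2m}$ has codimension one and is automatically its maximal invariant subspace --- so it yields a complex structure on $\g$.

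I expect the substance, and the main obstacle, to be on the ``only if'' side: making the general integrability criterion precise (the Nijenhuis computation, and the fact that $\h_1=\h\cap J\h$ always leads to the claimed block form of $D$, for every $J$), and then checking that the generalized-eigenspace splitting together with the adjustment of the transverse vector $e_{2n-1}$ leaves the conjugacy class of the nilpotent block unchanged, so that Theorem~\ref{thm:AA_complex} applies to it verbatim. Once this is secured the theorem follows essentially mechanically; the degenerate configurations (a trivial $Q$, or $\alpha$ absent from the spectrum of $A$) only require separate bookkeeping and present no real difficulty.
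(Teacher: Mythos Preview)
The paper does not prove this theorem: it is quoted from \cite[Theorem~4.17]{ArroyoBarberisDiazGodoyHernandez} and used without argument as input for Corollary~\ref{cor:ofArroyoetal}. There is therefore no proof in the present paper to compare your proposal against.

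That said, your outline is the natural one and matches the strategy of the original reference: extract the integrability criterion $D=\begin{smallpmatrix}A&v\\0&\alpha\end{smallpmatrix}$ with $[A,J_1]=0$ (this is \cite{LRv15}), split off the generalized $\alpha$-eigenspace, absorb the transverse part of $v$ by a shear in $\h$, and reduce to Theorem~\ref{thm:AA_complex}. One step, however, does not work as written. After you replace $e_{2n-1}$ by $e_{2n-1}':=e_{2n-1}+u$ with $u\in\h_1'$, the subspace $\h_1^{\alpha}\oplus\Span{e_{2n-1}',e_{2n}}$ is indeed a subalgebra (now $De_{2n-1}'=v_\alpha+\alpha e_{2n-1}'$), but it is \emph{not} $J$-invariant: $Je_{2n-1}'=e_{2n}+J_1u$ and $J_1u\in\h_1'$. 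With the original $e_{2n-1}$ the subspace is $J$-invariant but fails to be a subalgebra whenever $v'\neq0$, since $[e_{2n},e_{2n-1}]=v+\alpha e_{2n-1}$ has a component in $\h_1'$. So you cannot simply ``restrict $J$''.

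The fix is already implicit in your last sentence: do not restrict $J$, but use the integrability criterion in the sufficient direction. Once $D$ is conjugate to $M(\alpha)\oplus Q$ with $M(\alpha)=\begin{smallpmatrix}A|_{\h_1^\alpha}&v_\alpha\\0&\alpha\end{smallpmatrix}$, the block $A|_{\h_1^\alpha}-\alpha\Id$ still commutes with $J_1|_{\h_1^\alpha}$; hence the criterion (which is insensitive to adding a real multiple of $\Id$, as you note) produces an integrable complex structure on the nilpotent almost abelian Lie algebra with derivation $M(\alpha)-\alpha\Id$, and Theorem~\ref{thm:AA_complex} then constrains its Jordan type. With this adjustment your argument goes through. (Minor point: the shear should be $e_{2n-1}\mapsto e_{2n-1}-(A-\alpha\Id)^{-1}v'$; your sign gives $v'\mapsto 2v'$.)
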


In our language, we can summarize Theorems \ref{thm:AA_complex} and \ref{thm:AA_complex_general} as follows:

\begin{corollary}\label{cor:ofArroyoetal}
    An almost abelian Lie algebra $\frg=\R^{2n-1}\rtimes_D\R$ admits a complex structure if and only if one of the following holds: \begin{itemize}
        \item $j(D)\in J_0(\alpha)+\mathcal{GL}_\C$ with $\alpha\in\R$; or
        \item $j(D)\in J_k(\alpha)+J_{k+1}(\alpha)+\mathcal{GL}_\C$ with $\alpha\in\R$ and $k\geq0$.
    \end{itemize}
\end{corollary}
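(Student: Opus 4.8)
The plan is to show that Corollary~\ref{cor:ofArroyoetal} is just a translation of Theorems~\ref{thm:AA_complex} and~\ref{thm:AA_complex_general} into the block language of $\mathcal{GL}$. The first observation is that the Jordan type $j(D)$ is invariant under conjugation, so we may replace $D$ by any matrix in its adjoint orbit; in particular, by the Jordan decomposition we may assume $D = M(\alpha)\oplus Q$ with $M(\alpha)-\alpha\Id$ nilpotent and $Q$ having no eigenvalue equal to $\alpha$, exactly as in Theorem~\ref{thm:AA_complex_general}. The condition ``$Q$ commutes with a complex structure on $\R^{2m}$'' is, by the remark preceding the statement of Corollary~\ref{cor:ofArroyoetal} (about matrices in $\gl(2k,\R)$ whose Jordan type lies in $\mathcal{GL}_\C$), equivalent to $j(Q)\in\mathcal{GL}_\C$. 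Since $Q$ does not have $\alpha$ as an eigenvalue, none of the summands $J_m(\alpha)$ appears in $j(Q)$, so adding the $\alpha$-shifted contribution of $M(\alpha)$ does not interfere with the membership $j(Q)\in\mathcal{GL}_\C$; thus the whole content reduces to determining, for a nilpotent $N := M(\alpha)-\alpha\Id$, when $\g$ admits a complex structure, and then recording $j(D) = j(N)\text{ shifted by }\alpha + j(Q)$.

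Next I would match the three cases of Theorem~\ref{thm:AA_complex} against the two alternatives in the corollary. Write $j(N) = p_1 J_{n_1-1}(0)+\cdots+p_kJ_{n_k-1}(0)+\tau J_0(0)$ as in \eqref{eqn:genericnilpotent}, so $j(M(\alpha))$ is the same expression with each $J_m(0)$ replaced by $J_m(\alpha)$. In case~(1), $\tau$ is odd and all $p_i$ are even; then $\tau J_0(\alpha)$ contributes one copy of $J_0(\alpha)$ plus an even number, and every $p_iJ_{n_i-1}(\alpha)$ with $n_i-1\geq 1$ has even multiplicity, hence lies in $\mathcal{GL}_\C$ (a Jordan block of size $\geq 2$ taken with even multiplicity is the realification of a complex Jordan block). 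So $j(M(\alpha))\in J_0(\alpha)+\mathcal{GL}_\C$, the first alternative. Case~(2) is analogous: $\tau$ odd gives the $J_0(\alpha)$, the blocks $p_1,\dots,p_{k-1}$ are even hence in $\mathcal{GL}_\C$, and $n_k=2$ with $p_k$ odd gives $p_kJ_1(\alpha)$, which equals $(p_k-1)J_1(\alpha) + J_1(\alpha)$; but $J_1(\alpha) = 2J_0(\alpha)$ would be wrong---rather $J_1(\alpha)$ is a single $2\times 2$ real Jordan block, whose realification interpretation needs care---so here one observes instead that $J_1(\alpha)+J_0(\alpha)$, together with the even remainder, again lands in $J_0(\alpha)+\mathcal{GL}_\C$ because $J_1(\alpha)$ itself is not in $\mathcal{GL}_\C$ but $J_1(\alpha)$ with the leftover odd $J_0$ can be regrouped; I would verify this bookkeeping explicitly. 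Case~(3) has $\tau$ even (so no isolated $J_0(\alpha)$ forced) and exactly two consecutive sizes $n_{\ell-1}=n_\ell+1$ with odd multiplicities, all others even; peeling off the even parts into $\mathcal{GL}_\C$ leaves $J_{n_\ell}(\alpha)+J_{n_\ell-1}(\alpha)$, i.e.\ the second alternative $J_k(\alpha)+J_{k+1}(\alpha)+\mathcal{GL}_\C$ with $k = n_\ell-1$.

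Conversely, I would show that any $j(D)$ of one of the two listed forms satisfies the hypotheses of Theorem~\ref{thm:AA_complex_general}. Given $j(D)\in J_0(\alpha)+\mathcal{GL}_\C$, collect the $J_m(\alpha)$-summands into $M(\alpha)$ and the rest into $Q$; the single forced $J_0(\alpha)$ makes $\tau$ odd and the $\mathcal{GL}_\C$ part forces the remaining multiplicities $p_i$ of $\alpha$-blocks to be even unless a $J_1(\alpha)$ with odd multiplicity occurs, which is precisely case~(1) or~(2) of Theorem~\ref{thm:AA_complex}; and $j(Q)\in\mathcal{GL}_\C$ with no $\alpha$-eigenvalue gives the hypothesis on $Q$. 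Similarly $j(D)\in J_k(\alpha)+J_{k+1}(\alpha)+\mathcal{GL}_\C$ produces case~(3), with $\ell$ chosen so that $n_\ell-1 = k$. One subtlety worth spelling out: the decomposition of $j(D)$ into an ``$\alpha$-part'' and a complement is unique once $\alpha$ is fixed, but different choices of $\alpha$ could a priori both work; this causes no problem because the corollary only asserts existence of such an $\alpha$, matching Theorem~\ref{thm:AA_complex_general}.

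The main obstacle I anticipate is the careful treatment of the size-two blocks $J_1(\alpha)$, since in case~(2) of Theorem~\ref{thm:AA_complex} the condition $n_k=2$ with $p_k$ odd sits at exactly the boundary between ``even multiplicity, hence realification of a complex block'' and ``genuinely real''; I would need to check that $J_1(\alpha)$ does \emph{not} lie in $\mathcal{GL}_\C$ (a single real $2\times2$ Jordan block with real eigenvalue does not commute with any complex structure), so that an odd number of them, minus one absorbed together with the forced odd $\tau$ into the ``$J_0(\alpha)+\cdots$'' bookkeeping, still works out. A clean way to organize the whole argument is to prove once and for all the elementary fact that for a nilpotent block, $pJ_m(0)\in\mathcal{GL}_\C$ iff $p$ is even (for $m\geq 1$) while $J_0(0)\notin\mathcal{GL}_\C$, and that $J_k(\alpha)+J_{k+1}(\alpha)\notin\mathcal{GL}_\C$ but is the ``minimal defect'' beyond $\mathcal{GL}_\C$ together with $J_0(\alpha)$; everything else is then a mechanical comparison of the parity conditions. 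The rest is routine and I would not belabor it.
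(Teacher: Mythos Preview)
Your overall approach is the same as the paper's: translate Theorems~\ref{thm:AA_complex} and~\ref{thm:AA_complex_general} into the $\mathcal{GL}$ language and check both directions. However, your handling of case~(2) of Theorem~\ref{thm:AA_complex} is muddled, and this is a genuine gap rather than a cosmetic issue.

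You try to absorb case~(2) into the first alternative $J_0(\alpha)+\mathcal{GL}_\C$. This cannot work: with $\tau$ odd, $p_k$ odd and $n_k=2$, the Jordan type of $M(\alpha)$ contains $J_0(\alpha)+J_1(\alpha)$ plus even multiples, and a single $J_1(\alpha)$ is \emph{not} in $\mathcal{GL}_\C$ (that semigroup is generated by $2J_m(\alpha)$ and $C_m(\zeta)$). Your proposed ``regrouping'' of $J_1(\alpha)$ with the leftover $J_0(\alpha)$ does not land in $\mathcal{GL}_\C$ either. The correct resolution, which the paper uses, is that case~(2) is precisely the $k=0$ instance of the \emph{second} alternative $J_k(\alpha)+J_{k+1}(\alpha)+\mathcal{GL}_\C$. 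The paper notes this explicitly: ``the second case is subsumed in the last if $k\geq0$ is allowed.'' So the map between Theorem~\ref{thm:AA_complex} and the corollary is
\[
\text{case (1)} \leftrightarrow J_0(\alpha)+\mathcal{GL}_\C,\qquad
\text{case (2)} \leftrightarrow k=0,\qquad
\text{case (3)} \leftrightarrow k\geq1,
\]
not case~(1)--(2) versus case~(3) as you propose. This also fixes your converse: if $j(D)\in J_0(\alpha)+\mathcal{GL}_\C$, then all $\alpha$-block multiplicities in the $\mathcal{GL}_\C$ part are even, so $\tau$ is odd and every $p_i$ is even, giving exactly case~(1); there is no way to produce an odd $p_k$ with $n_k=2$ from the first alternative. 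Once this correction is made, the remainder of your argument goes through and coincides with the paper's.
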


\begin{proof}
    Let $\mathcal{T}$ denote the semigroup generated by $2J_m(\alpha)$ as $\alpha\in\R$, $m\geq0$. Consider the case where $D$ is nilpotent. With this assumption, Theorem \ref{thm:AA_complex} shows that $\g=\R^{2n-1}\rtimes_D\R$ has a complex structure if and only if the Jordan type belongs to one of the following $$J_0(0)+\mathcal{T},\quad J_0(0)+J_1(0)+\mathcal{T},\quad J_k(0)+J_{k+1}(0)+\mathcal{T},\quad k\geq1,$$ though only the eigenvalue zero is allowed to occur; notice that the second case is subsumed in the last if $k\geq0$ is allowed.\medskip

    Now assume $\R^{2n-1}\rtimes_D\R$ has a complex structure, with $D$ arbitrary. By Theorem \ref{thm:AA_complex_general} we can assume $D=M(\alpha)\oplus Q$, where $M(\alpha)$ has type $$J_0(\alpha)+\mathcal{T},\quad  J_k(\alpha)+J_{k+1}(\alpha)+\mathcal{T},\quad k\geq0,$$ and $Q$ commutes with a complex structure, hence $j(Q)\in\mathcal{GL}_\C$. Since $\mathcal{T}\subset\mathcal{GL}_\C$, we obtain $$j(D)=j(M(\alpha))+j(Q)\in J_0(\alpha)+\mathcal{GL}_\C\quad\text{or}\quad j(D)\in J_k(\alpha)+J_{k+1}(\alpha)+\mathcal{GL}_\C,\quad k\geq0.$$

    Conversely, if $D$ has Jordan type in $J_0(\alpha)+\mathcal{GL}_\C$, write $$j(D)=J_0(\alpha)+r+q,$$ where $r$ is a sum of elements of the form $J_m(\alpha)$ and $q$ does not contain any $J_m(\alpha)$. We see that $J_0(\alpha)+r$ is in $J_0(\alpha)+\mathcal{T}$, and $q$ is in $\mathcal{GL}_\C$. Let $M(\alpha)$ be a matrix with Jordan type $J_0(\alpha)+r$. If $q=0$, up to conjugation we can assume $D=M(\alpha)$; otherwise, we can assume $D=M(\alpha)\oplus Q$, with $j(Q)=q$. In both cases, $\R^{2n-1}\rtimes_D\R$ has a complex structure by Theorem \ref{thm:AA_complex_general}. The same argument applies when $D$ has type in $J_k(\alpha)+J_{k+1}(\alpha)+\mathcal{GL}_\C$.
\end{proof}

Denote by $\mathcal{Q}$ the semigroup generated by $j(A(t))$ as $t\in\mathcal{U}$. Explicitly, the generators are $$2J_m(0),\quad   2J_m(x)+2J_m(-x)\text{ with }x\in\R\setminus\{0\},$$ $$C_m(\zeta)\text{ with }\zeta\in i\R\setminus\{0\},\quad C_m(\zeta)+C_m(-\overline\zeta)\text{ with }\zeta\in\C\setminus(\R\cup i\R).$$

\begin{theorem}\label{thm:jordantypes}
    Let $\g=\R^{2n-1}\rtimes_D\R$ be an almost abelian Lie algebra. Then $\g$ admits a pseudo-Kähler structure if and only if there exists $\alpha\in\R$ such that $j(D)$ is one of the following cases: $$J_0(\alpha)+\mathcal{Q}, \quad J_m(0)+J_{m+1}(0)+\mathcal{Q}, \quad J_0(\alpha)+2J_0(-\alpha)+\mathcal{Q}.$$
\end{theorem}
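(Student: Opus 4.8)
The plan is to combine the classification of derivations arising from our pseudo-Kähler families $D_0,\dots,D_6$ (Theorem~\ref{thm:classification}) with Corollary~\ref{cor:ofArroyoetal}. First I would observe that, since $\g$ admits a pseudo-Kähler structure if and only if (as an abstract Lie algebra) it is isomorphic to one of the $\g_i(t,\dots)$, it suffices to compute the Jordan type $j(D_i(t,\dots))$ for each family and each $t\in\mathcal{U}$, and then check that the resulting set of Jordan types is exactly the union of the three families in the statement. In the non-isotropic case, $D=D_0(t,a)=A(t)\oplus(a)$, so $j(D)=j(A(t))+J_0(a)$; as $t$ ranges over $\mathcal{U}$, $j(A(t))$ ranges over all of $\mathcal{Q}$ by definition, so this family contributes exactly $J_0(\alpha)+\mathcal{Q}$ with $\alpha=a$.

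Next I would handle the isotropic families $D_1,\dots,D_6$. For $D_1(t,c_2)$ the matrix is block upper triangular with diagonal blocks $(-1),(-1),A(t),(1)$, and the off-diagonal entry $c_2$ does not change the Jordan type on the $2\times2$ block $\begin{smallpmatrix}-1&0\\0&1\end{smallpmatrix}$ (the two eigenvalues are distinct, so it is diagonalizable), hence $j(D_1(t,c_2))=J_0(-1)+J_0(1)+j(A(t))$; rescaling (the overall scaling symmetry $a\mapsto a/x$ in Lemma~\ref{lemma:orbitsarbitrarydimension}) shows this realizes $J_0(\alpha)+2J_0(-\alpha)+\mathcal{Q}$ for $\alpha\ne0$ after relabelling — here one must be slightly careful that the scaling acts compatibly on $A(t)$, so I would phrase it as: the Jordan type of $D_1$ lies in $J_0(\alpha)+2J_0(-\alpha)+\mathcal{Q}$ and every such type with $\alpha\neq 0$ is attained. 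For $D_4(t,c_1),D_5(t),D_6(t)$ the derivation is nilpotent: $D_6(t)=0\oplus 0\oplus A(t)\oplus 0$ has $j(D_6(t))=2J_0(0)+J_0(0)+j(A(t))$, which lies in $J_0(0)+\mathcal{Q}$ (absorbing $2J_0(0)\in\mathcal{Q}$); $D_5(t)$ adds a single nilpotent Jordan relation between $e_1$ and $e_{2n}$, giving type $J_1(0)+J_0(0)+j(A(t))\in J_0(0)+\mathcal{Q}$ as well (again $J_1(0)$... no: $J_1(0)$ pairs $e_{2n},e_1$, and $e_2$ stays a $J_0(0)$, so the type is $J_1(0)+J_0(0)+\text{stuff}$, still in $J_0(0)+\mathcal{Q}$ since $J_0(0)$ is present); $D_4(t,c_1)$ is conjugate to $D_5(t)$ as noted in the Remark, same type. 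Finally $D_2(t,x)$ and $D_3(t,x)$: here $t$ must contain a nilpotent block $\Delta_m^\pm(0)$, and the point is that appending $v(x)$ to a Jordan chain of $A(t)$ lengthens it by one, so the nilpotent part picks up a $J_m(0)+J_{m+1}(0)$ pair (or in the degenerate $m=0$ case a $J_0(0)+J_1(0)$); this is the family $J_m(0)+J_{m+1}(0)+\mathcal{Q}$. For $D_3$ there is an additional $1$ in position coupling with $e_2$, but since $e_2$ contributes a $J_0(0)$ already absorbed in $\mathcal{Q}$-type bookkeeping, one checks it does not alter the Jordan type beyond what $D_2$ gives, or produces again $J_m(0)+J_{m+1}(0)+\mathcal{Q}$.

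The remaining task is to verify that nothing outside these three families appears, and that each of the three families is genuinely realized — in particular that $J_m(0)+J_{m+1}(0)+\mathcal{Q}$ for every $m\ge0$ is hit (from $D_2$ with a suitably chosen nilpotent block of size $m$ in $t$, or from $D_5$ when $m=0$), and that $J_0(\alpha)+2J_0(-\alpha)+\mathcal{Q}$ is hit for all $\alpha$ including $\alpha=0$ (from $D_1$ for $\alpha\neq0$ and from $D_6$ for $\alpha=0$, since $J_0(0)+2J_0(0)+\mathcal Q = J_0(0)+\mathcal Q$). Then I would note that the union of the three listed families is closed in the right way and contains the abelian case ($D=0$, type $(2n-1)J_0(0)\in J_0(0)+\mathcal Q$) and the $\R^{2n-3}\oplus\heis_3$ case (type $J_1(0)+(2n-3)J_0(0)\in J_1(0)+J_0(0)+\mathcal Q$, which is the $m=0$ instance of the middle family), so the families $D_0,\dots,D_6$ cover everything. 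Conversely, a Lie algebra with Jordan type in one of the three families can be matched to one of the $D_i$ by reconstructing the appropriate block data $t$ and parameters — this direction is essentially reading the computations above backwards.

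The main obstacle I anticipate is the careful bookkeeping in the isotropic families, specifically making sure that the extra off-diagonal parameters $c_1,c_2$ and the coupling term $1$ in $D_3$ genuinely do not perturb the Jordan type, and — more delicately — correctly identifying how the vector $v(x)$ interacts with the Jordan structure of $A(t)$: one must check that $v(x)$ always extends a \emph{longest} chain of the relevant nilpotent block (so that sizes $m$ and $m+1$ appear and not, say, $m$ and $m$), which is exactly the content of the parametrization in Lemma~\ref{lemma:parametrizecomplement}. The rescaling symmetry from Lemma~\ref{lemma:orbitsarbitrarydimension} also needs to be invoked cleanly to pass between $D_1(t,c_2)$ with fixed eigenvalue $-1$ and the general $J_0(\alpha)+2J_0(-\alpha)$ pattern; I would state this as a short lemma or inline remark rather than belabor it. Everything else is a finite case check, family by family.
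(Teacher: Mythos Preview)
Your overall approach is the same as the paper's: compute $j(D_i)$ for each of the seven families and verify that the resulting set of Jordan types is exactly the union of the three families in the statement. Your treatment of $D_0$, $D_1$, and $D_6$ is correct.

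There is a small slip for $D_4,D_5$: you place $j(D_5(t))=J_1(0)+J_0(0)+j(A(t))$ in $J_0(0)+\mathcal{Q}$, but $J_1(0)\notin\mathcal{Q}$ (the nilpotent generators of $\mathcal{Q}$ are $2J_m(0)$, with even multiplicity). The correct placement is the second family $J_m(0)+J_{m+1}(0)+\mathcal{Q}$ with $m=0$; this is how the paper records it.

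The substantive gap is in your analysis of $D_2(t,x)$. You describe the effect as ``appending $v(x)$ to a Jordan chain of $A(t)$ lengthens it by one'', as if only the last column of $D_2$ mattered. But the first two rows of $D_2$ carry $-(J_Vv)^{\flat_V}$ and $v^{\flat_V}$, so the chain does not terminate at $A^m v$: one has $D(A^m v)\in\Span{e_1,e_2}\setminus\{0\}$ because $g(v,A^m v)$ or $g(Jv,A^m v)$ is nonzero --- this is exactly where the explicit form of $h_m$ enters, via a computation the paper records as \eqref{eqn:gvAkv}. In the generic (single-sign) case the chain from $e_{2n-1}$ has length $m+3$ and the chain from $Jv$ has length $m+2$, both absorbing one of $e_1,e_2$; hence
\[
j(D_2(t,x))=j(A(t))-2J_m(0)+J_{m+1}(0)+J_{m+2}(0),
\]
not $J_m(0)+J_{m+1}(0)+2J_0(0)+(\text{rest})$ as your model predicts. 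The paper also treats separately the case $x(\Delta_m^+(0))=x(\Delta_m^-(0))=1$ allowed by \eqref{eq:condition_Xt}, where the signs in \eqref{eqn:gvAkv} cancel and one obtains $J_m(0)+3J_{m+1}(0)$ in place of the $4J_m(0)$ coming from the two blocks; your sketch does not account for this.

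Your conclusion still lands in $J_k(0)+J_{k+1}(0)+\mathcal{Q}$ for some $k$, so the forward implication survives by accident, but the mechanism you describe would not hold up under a direct computation and your index is off by one. For the converse direction this matters: with the correct indexing one sees that $D_2$ produces all $k\ge 1$ while $D_4,D_5$ supply $k=0$.
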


\begin{proof}
    Suppose $\R^{2n-1}\rtimes_D\R$ is an abelian pseudo-Kähler Lie algebra. Then a nonzero multiple of $D$ is conjugated to a derivation in the families $D_0,\dotsc, D_6$; we need to compute $j(D)$ in each case. The following cases are straightforward: \begin{itemize}
        \item $j(D_0(t,a))=j(A(t))+J_0(a)$,
        \item $j(D_1(t,c_2))=j(A(t))+J_0(1)+2J_0(-1)$,
        \item $j(D_4(t,c_1))=j(A(t))+J_1(0)+J_0(0)=j(D_5(t))$,
        \item $j(D_6(t))=j(A(t))+3J_0(0)$.
    \end{itemize}

    It is clear that all these Jordan types are of the form in the statement, and all cases are covered  except $J_m(0)+J_{m+1}(0)+\mathcal{Q}$ for $m>0$.\medskip

    For $D=D_2(t,x)$, let $m$ be the largest integer such that some $\Delta=\Delta_m^\varepsilon(0)\in N_t$ satisfies $x(\Delta)>0$, meaning that $v(x)$ has a nonzero component along $v(\Delta)$. By \eqref{eq:condition_Xt}, $\Delta$ is unique unless both $\Delta_m^+(0)$ and $\Delta_m^-(0)$ appear in $t$ and $x(\Delta_m^+(0))=1=x(\Delta_m^-(0))$. Because of the form of the metric, we have $$h_m(v(\Delta),A^kv(\Delta))=\begin{cases}
        1&\text{if }k=m\text{ is even},\\
        -i&\text{if }k=m\text{ is odd},\\
        0&\text{otherwise}.
    \end{cases}$$

    In real terms, this implies \begin{equation}\label{eqn:gvAkv}
        g(\Delta)(v(\Delta),A^kv(\Delta))=\begin{cases}
            \varepsilon&\text{if }k=m\text{ is even},\\
            0&\text{otherwise},
        \end{cases}\qquad g(\Delta)(Jv(\Delta),A^kv(\Delta))=\begin{cases}
            \varepsilon&\text{if }k=m\text{ is odd},\\
            0&\text{otherwise}.
        \end{cases}
    \end{equation}

    Consider the case where $x(\Delta_m^+(0))=1=x(\Delta_m^-(0))$. It follows from \eqref{eqn:gvAkv} that $A^m(v(x))$ is orthogonal to $v(x)$ and $Jv(x)$; hence, $e_{2n-1}$ generates a Jordan block of type $J_{m+1}(0)$, i.e.\ $\Span{D^k(e_{2n-1})\mid k\geq 0}$ has dimension $m+2$. Similarly, $v(\Delta_{m}^+(0))$, $Jv(\Delta_{m}^+(0))$ generate Jordan blocks of type $J_{m+1}(0)$, and $Jv(x)$ generates a Jordan block of type $J_m(0)$. The other Jordan blocks of $A$ can be assumed to be in the kernel of $v(x)^\flat$ and $(Jv(x))^\flat$ by a (non-unitary) change of basis, so they determine Jordan blocks of $D$ of the same type. Thus, in this case $$j(D_2(t,x))=(j(A)-4J_m(0))+J_{m}(0)+3J_{m+1}(0)\in J_{m}(0)+J_{m+1}(0)+\mathcal{Q},\quad m\geq0.$$

    Suppose now that $x(\Delta^{\pm}_m(0)$ is only nonzero for one choice of the sign $\pm1$, i.e.\ the block $\Delta$ chosen above is unique. Then $$\Span{D^n(e_{2n-1})\mid n\geq0}=\Span{e_{2n-1},v(x), Av(x),\dotsc, A^{m}v(x)}\oplus\begin{cases}
        \Span{e_1}&\text{if }m\text{ odd},\\
        \Span{e_2}&\text{if }m\text{ even},
    \end{cases}$$ i.e.\ $e_{2n-1}$ generates a Jordan block of type $J_{m+2}(0)$; similarly, $Jv(x)$ generates a block of type $J_{m+1}(0)$, containing $e_1$ or $e_2$ according to whether $m$ is even or odd. Again, we can perform a non-unitary change of basis that puts the other blocks in the kernel of $v(x)^\flat$ and $(Jv(x))^\flat$, and we obtain $$j(D_2(t,x))=j(A(t))-2J_m(0)+J_{m+1}(0)+J_{m+2}(0)\in J_{m+1}(0)+J_{m+2}(0)+\mathcal{Q}, \quad m\geq0.$$

    Thus, the Jordan types corresponding to $D_2(t,x)$ belong to the second family in the statement and cover the missing cases. Notice that for $D_3(t,x)$ we obtain the same Jordan types, since $D_3(t,x)$ and $D_2(t,x)$ are conjugated in $\gl(2n,\R)$.
\end{proof}

\begin{corollary}\label{cor:complex_PK}
    Let $\g$ be a nilpotent almost abelian Lie algebra admitting a complex structure. Then $\g$ admits a pseudo-Kähler structure.
\end{corollary}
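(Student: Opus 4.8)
The plan is to combine Corollary~\ref{cor:ofArroyoetal}, which describes the possible Jordan types of $D$ when $\g=\R^{2n-1}\rtimes_D\R$ is nilpotent and admits a complex structure, with Theorem~\ref{thm:jordantypes}, which describes the Jordan types of $D$ when $\g$ admits a pseudo-Kähler structure. In the nilpotent case the only real eigenvalue that can occur is $\alpha=0$, so Corollary~\ref{cor:ofArroyoetal} tells us that $j(D)$ lies in $J_0(0)+\mathcal{GL}_\C$ or in $J_k(0)+J_{k+1}(0)+\mathcal{GL}_\C$ for some $k\geq0$. On the other side, Theorem~\ref{thm:jordantypes} tells us that $\g$ admits a pseudo-Kähler structure precisely when $j(D)$ lies in $J_0(0)+\mathcal{Q}$, in $J_m(0)+J_{m+1}(0)+\mathcal{Q}$ for some $m\geq0$, or in $J_0(0)+2J_0(0)+\mathcal{Q}=3J_0(0)+\mathcal{Q}$ (taking $\alpha=0$; note $2J_0(-\alpha)=2J_0(0)$). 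So the corollary reduces to a purely combinatorial comparison of the two collections of subsets of the semigroup $\mathcal{GL}$.

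The first step is to observe that all the nilpotent generators of $\mathcal{GL}_\C$ already appear among the generators of $\mathcal{Q}$: indeed $\mathcal{Q}$ is generated by $2J_m(0)$, $2J_m(x)+2J_m(-x)$ for $x\neq0$, $C_m(\zeta)$ for $\zeta\in i\R\setminus\{0\}$ and $C_m(\zeta)+C_m(-\overline\zeta)$ for $\zeta\notin\R\cup i\R$, whereas the nilpotent part of $\mathcal{GL}_\C$ is generated just by the $2J_m(0)$. However, $\mathcal{GL}_\C$ also contains generators $2J_m(\alpha)$ for $\alpha\neq 0$ and $C_m(\zeta)$ for $\zeta$ with nonzero real part that are \emph{not} in $\mathcal{Q}$; the key point is that for a \emph{nilpotent} $D$ these cannot occur, since a nilpotent matrix has only the eigenvalue $0$. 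Thus when $D$ is nilpotent, the element $j(D)\in\mathcal{GL}_\C$ automatically lies in the sub-semigroup $\mathcal{N}$ generated by the $2J_m(0)$, and $\mathcal{N}\subset\mathcal{Q}$.

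With that observation, the two cases of Corollary~\ref{cor:ofArroyoetal} map directly into cases of Theorem~\ref{thm:jordantypes}: if $j(D)\in J_0(0)+\mathcal{GL}_\C$ with $D$ nilpotent, then $j(D)\in J_0(0)+\mathcal{N}\subset J_0(0)+\mathcal{Q}$, which is the first pseudo-Kähler family; if $j(D)\in J_k(0)+J_{k+1}(0)+\mathcal{GL}_\C$ with $D$ nilpotent, then $j(D)\in J_k(0)+J_{k+1}(0)+\mathcal{N}\subset J_k(0)+J_{k+1}(0)+\mathcal{Q}$, which is the second pseudo-Kähler family (with $m=k$). Hence $j(D)$ is of pseudo-Kähler type, and by Theorem~\ref{thm:jordantypes} the Lie algebra $\g$ admits a pseudo-Kähler structure. (The third family $3J_0(0)+\mathcal{Q}$ of Theorem~\ref{thm:jordantypes} is not even needed here, though it is of course consistent, being contained in $J_0(0)+\mathcal{Q}$.)

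I expect the only subtle point — and thus the part that needs to be stated carefully rather than the part that is hard — is the reduction ``$j(D)\in\mathcal{GL}_\C$ and $D$ nilpotent $\Rightarrow$ $j(D)\in\mathcal{N}$'', i.e.\ that the non-nilpotent generators of $\mathcal{GL}_\C$ cannot contribute; this is immediate from the fact that the Jordan type of a nilpotent endomorphism involves only the blocks $J_m(0)$. Everything else is bookkeeping: matching the semigroup expressions of Corollary~\ref{cor:ofArroyoetal} against those of Theorem~\ref{thm:jordantypes} and checking that $\mathcal{N}\subseteq\mathcal{Q}$, which holds generator-by-generator.
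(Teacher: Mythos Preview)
Your proposal is correct and takes essentially the same approach as the paper: since $D$ is nilpotent, every Jordan block is of the form $J_m(0)$, so the $\mathcal{GL}_\C$-component in the expression from Corollary~\ref{cor:ofArroyoetal} consists only of terms $2J_m(0)$, which already lie in $\mathcal{Q}$; then Theorem~\ref{thm:jordantypes} applies directly. The paper's proof is the same argument in a single sentence, and your additional commentary (defining the sub-semigroup $\mathcal{N}$, noting the third family is redundant) is correct but not needed.
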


\begin{proof}
    If $D$ is nilpotent, all of its blocks are of the form $J_m(0)$; writing $D$ as in Corollary \ref{cor:ofArroyoetal}, the component of $j(D)$ in $\mathcal{GL}_\C$ also belongs to $\mathcal{Q}$; it now suffices to apply Theorem \ref{thm:jordantypes}.
\end{proof}

More generally, an almost abelian Lie algebra that admits a complex structure does not necessarily admit a symplectic structure. Thus, Corollary \ref{cor:complex_PK} does not generalize in a straightforward way to the non-nilpotent case. A more subtle question is whether every almost abelian Lie algebra that admits both a complex and a symplectic structure also admits a pseudo-Kähler structure. In the last part of this section, we will answer this question in the negative.\medskip

The starting point is the classification of almost abelian Lie algebras admitting a symplectic form obtained in \cite{ArroyoBarberisDiazGodoyHernandez}. The classification is broken up in three results; we will state the first in the same form as the original, summarize the second, and reformulate the third in our language.

\begin{theorem}[{\cite[Theorem 5.10]{ArroyoBarberisDiazGodoyHernandez}}]\label{thm:symplecticnilpotent}
    A nilpotent almost abelian Lie algebra $\frg=\R^{2n-1}\rtimes_D\R$ admits a symplectic structure if and only if $D$ takes the form \eqref{eqn:genericnilpotent} with either \begin{enumerate}
        \item $\tau$ odd and $p_i$ even for all odd $n_i$;
        \item $\tau$ even and there exists a unique $\ell$ such that both $n_\ell$ and $p_\ell$ are odd.
    \end{enumerate}
\end{theorem}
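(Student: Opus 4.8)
The plan is to translate the existence of a symplectic form on $\frg=\frh\rtimes_D\R$ into a linear-algebra condition on the nilpotent $D$ acting on $\frh\cong\R^{2n-1}$, and then into a parity condition on its Jordan partition $\lambda=(n_1^{p_1},\dots,n_k^{p_k},1^{\tau})$. Write a $2$-form as $\omega=\omega_0+e^{2n}\wedge\beta$ with $\omega_0\in\Lambda^2\frh^*$, $\beta\in\frh^*$. A short Chevalley--Eilenberg computation gives $d\omega=e^{2n}\wedge(D\cdot\omega_0)$, where $D\cdot\omega_0$ is the derivation action of $D$ on $\Lambda^2\frh^*$, so $\omega$ is closed iff $\omega_0$ is $D$-invariant. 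Writing the Gram matrix of $\omega$ as $\left(\begin{smallmatrix}W_0&-b\\b^t&0\end{smallmatrix}\right)$ and using that a skew form on the odd-dimensional space $\frh$ has radical of odd dimension, one sees that $\omega$ is non-degenerate iff $\omega_0$ has a one-dimensional radical $R$ and $\beta|_R\neq0$; the latter is always arrangeable. Hence $\frg$ is symplectic iff $\frh$ carries a $D$-invariant skew form of corank one.

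Next I would pass to the quotient. Any such $R$ is $D$-invariant, hence lies in $\ker D$, and the induced form on $\frh/R$ is non-degenerate with $\bar D:=D|_{\frh/R}$ skew for it; conversely a non-degenerate $\bar D$-invariant form on $\frh/R$ pulls back to $\frh$ with radical $R$. By the classical description of nilpotent orbits, a nilpotent endomorphism of an even-dimensional real vector space is skew for some non-degenerate form precisely when every odd part of its Jordan partition has even multiplicity. So $\frg$ is symplectic iff some line $R\subseteq\ker D$ makes the partition $\mu$ of $\bar D$ have all odd parts of even multiplicity. Comparing $\dim\ker\bar D^{\,j}$ with $\dim\ker D^{\,j}$ and letting $d$ be the largest integer with $R\subseteq\Image D^{\,d}$, one computes that the conjugate partition of $\mu$ equals that of $\lambda$ with one box removed from row $d+1$; equivalently $\mu$ arises from $\lambda$ by lowering exactly one of the part-values $n_1,\dots,n_k,1$ by one, and each such $\mu$ is realised by taking $R$ the bottom of a Jordan chain of the corresponding length.

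The final step is combinatorial. Lowering a part-value $\ell$ by one flips the parity of exactly one multiplicity at an odd index: that of $\ell$ when $\ell$ is odd, that of $\ell-1$ when $\ell$ is even. Let $S=\{\,j\text{ odd}:m_j\text{ odd}\,\}$, where $m_j$ denotes the multiplicity of $j$ in $\lambda$; from $|\lambda|=2n-1$ odd we get $|S|$ odd, so $S\neq\varnothing$. One admissible move can turn $S$ into $\varnothing$ --- i.e.\ make $\mu$ symplectic --- iff $|S|=1$, in which case lowering the part-value $j_0$ with $S=\{j_0\}$ succeeds, its multiplicity being odd and hence positive. Since $m_1=\tau$ and $m_{n_i}=p_i$, the condition $|S|=1$ reads exactly: either $\tau$ odd and $p_i$ even for every odd $n_i$, or $\tau$ even with a unique $\ell$ such that $n_\ell$ and $p_\ell$ are both odd --- that is, cases (1) and (2).

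I expect the hard part to be the middle step, namely pinning down exactly which Jordan types $\mu$ occur for $D|_{\frh/R}$ as $R$ ranges over lines in $\ker D$: this needs careful tracking of the flags $\ker D^{\,j}$ and $\Image D^{\,j}$, and one must note that the symplectic-orbit criterion is used over $\R$, where it still holds because non-degenerate skew forms carry no signature.
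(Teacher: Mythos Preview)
The paper does not prove this theorem; it is quoted from \cite{ArroyoBarberisDiazGodoyHernandez} and used as a black box (together with Theorems~\ref{thm:symplectic_c} and the criterion from \cite{Mehl_Rodman_2016}) in the proof of Corollary~\ref{cor:symplectic}. There is therefore no in-paper argument to compare your proposal against.

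That said, your argument is correct. Each step checks out: the reduction of closedness to $D$-invariance of $\omega_0$ is a straightforward Chevalley--Eilenberg calculation; the non-degeneracy analysis via the block Gram matrix is right (a vector $(v,0)$ with $v\in\ker W_0\cap\ker\beta$ would lie in the radical, forcing $\dim\ker W_0=1$); the passage to $\frh/R$ is clean because a $D$-invariant one-dimensional $R$ with $D$ nilpotent must sit in $\ker D$; your computation of the Jordan type of $\bar D$ via $\dim\ker\bar D^{\,j}=\dim\ker D^{\,j}$ for $j\le d$ and $\dim\ker D^{\,j}-1$ for $j>d$ is exactly the right bookkeeping, and it does give $\mu^t=\lambda^t-e_{d+1}$; and the final parity count is tidy and correct, including the observation that $|S|$ is odd because $\sum_{j\text{ odd}}m_j\equiv|\lambda|\equiv 1\pmod 2$.

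Two minor points worth making explicit when you write it up. First, the realisability claim ``each such $\mu$ is realised'' requires noting that $\ker D\cap\Image D^{\,d}$ strictly contains $\ker D\cap\Image D^{\,d+1}$ precisely when some Jordan block has size exactly $d+1$; you implicitly use this, and it is why the admissible moves are exactly ``lower an existing part by one''. Second, in the sufficiency direction with $|S|=\{j_0\}$ you correctly take $\ell=j_0$ odd, which is always available since $m_{j_0}$ odd implies $m_{j_0}>0$; the alternative move $\ell=j_0+1$ need not be available, but you do not need it. Your remark that the $\mathfrak{sp}$-criterion for nilpotents holds over $\R$ without modification is also correct and worth stating.
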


\begin{theorem}[{\cite[Theorem 5.24]{ArroyoBarberisDiazGodoyHernandez}}]\label{thm:symplectic_c}
    Let $c\in\R\setminus\{0\}$; fix a matrix $M(c)\in\End(\R^n)$ which only has the eigenvalue $c$, and $M(-c)\in\End(\R^{n-1})$ which only has the eigenvalue $-c$. Set $D=M(c)\oplus M(-c)$. Then $\frg=\R^{2n-1}\rtimes_D\R$ admits a symplectic form if and only if up to conjugation either $$M(c)=-M(-c)\oplus\mathcal{J}_1(c),$$ or there exists a matrix $A\in\End(\R^{n-k-1})$, $k\geq1$, which only has the eigenvalue $c$ such that $$M(c)=A\oplus\mathcal{J}_{k+1}(c),\quad M(-c)=-A\oplus\mathcal{J}_{k}(-c).$$
\end{theorem}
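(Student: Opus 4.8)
The plan is to reduce the existence of a symplectic form on $\frg=\R^{2n-1}\rtimes_D\R$ to a condition on the Jordan type of $D$, and then to unwind that condition into the two stated normal forms. First I would write a left-invariant two-form as $\omega=\omega_0+\gamma\wedge e^{2n}$ with $\omega_0\in\Lambda^2\frh^*$ and $\gamma\in\frh^*$. Since $\frh$ is abelian one has $\d e^{2n}=0$ and $\d(\gamma\wedge e^{2n})=0$, so $\d\omega=\d\omega_0$, and evaluating the Chevalley--Eilenberg differential on $X,Y\in\frh$ and $e_{2n}$ gives $\d\omega(X,Y,e_{2n})=\omega_0(DX,Y)+\omega_0(X,DY)$ while it vanishes on $\Lambda^3\frh$; hence $\omega$ is closed precisely when $\omega_0(DX,Y)+\omega_0(X,DY)=0$, i.e.\ $D$ preserves the (possibly degenerate) skew form $\omega_0$. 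As $\dim\frh=2n-1$ is odd, $\omega_0^{n}=0$, so $\omega^{n}=n\,\omega_0^{n-1}\wedge\gamma\wedge e^{2n}$, a top-degree form on $\frg$ that is nonzero exactly when $\omega_0$ has the maximal rank $2n-2$ and $\gamma$ does not vanish on the line $K:=\ker\omega_0$. Thus $\frg$ is symplectic if and only if $\R^{2n-1}$ carries a $D$-invariant skew form of rank $2n-2$, the accompanying one-form $\gamma$ being unconstrained apart from $\gamma|_K\neq0$.

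Next I would locate $K$. For $v\in K$ one has $\omega_0(Dv,\cdot)=-\omega_0(v,D\cdot)=0$, so $K$ is $D$-invariant, hence spanned by a real eigenvector of $D$; since $D=M(c)\oplus M(-c)$ has only the eigenvalues $\pm c\neq0$, this eigenvector lies in the $c$-eigenspace $\R^n$ or in the $(-c)$-eigenspace $\R^{n-1}$. The form $\omega_0$ descends to a symplectic form $\bar\omega_0$ on $\bar\frh:=\frh/K$, nondegenerate because $\ker\omega_0=K$, and preserved by the induced endomorphism $\bar D$, whose eigenvalues are again $\pm c$. By the standard fact that an invariant skew form pairs the $\lambda$-generalized eigenspace trivially with the $\mu$-generalized eigenspace unless $\mu=-\lambda$, the form $\bar\omega_0$ must pair the generalized $c$-eigenspace of $\bar D$ nondegenerately with its generalized $(-c)$-eigenspace; in particular these two have equal dimension. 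If $K\subset\R^{n-1}$ the dimensions would be $n$ and $n-2$, a contradiction; hence $K=\Span{v}$ with $M(c)v=cv$, and $\bar\frh=(\R^n/K)\oplus\R^{n-1}$ with $\bar D=\overline{M(c)}\oplus M(-c)$.

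Writing $M(c)=c\,\Id+N_+$ and $M(-c)=-c\,\Id+N_-$ with $N_\pm$ nilpotent, I would specialise the invariance relation to $x$ in the $c$-eigenspace and $y$ in the $(-c)$-eigenspace, where it collapses to $\bar\omega_0(\overline{N_+}x,y)+\bar\omega_0(x,N_-y)=0$: under the nondegenerate pairing $(\R^n/K)\times\R^{n-1}\to\R$ induced by $\bar\omega_0$, the operator $-N_-$ is the adjoint of $\overline{N_+}$. Because a nilpotent operator, its transpose and its negative all have the same Jordan type, such a pairing exists for a suitable $\bar\omega_0$ if and only if $\overline{N_+}$ and $N_-$ have the same Jordan type. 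For the converse, given such a coincidence I would fix a linear isomorphism $\R^{n-1}\xrightarrow{\sim}(\R^n/K)^{*}$ intertwining $-N_-$ with the transpose of $\overline{N_+}$, use it to build a nondegenerate skew form on $\bar\frh$ in which $\R^n/K$ and $\R^{n-1}$ are Lagrangian and are paired through this isomorphism, check $\bar D$-invariance by direct substitution, pull it back along $\frh\to\frh/K$ to a rank-$(2n-2)$ $D$-invariant $\omega_0$ with $\ker\omega_0=K$, and choose any $\gamma$ with $\gamma(v)\neq0$. This shows that $\frg$ is symplectic if and only if some Jordan block of $N_+$ can be shortened by one so that the resulting multiset of block sizes coincides with that of $N_-$.

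Finally I would read off the normal forms. If the shortened block had size $1$ it disappears, so $N_+$ consists of the blocks of $N_-$ together with one extra block of size $1$, whence $M(c)$ is conjugate to $(c\,\Id-N_-)\oplus(c)=-M(-c)\oplus\mathcal{J}_1(c)$; this is the first case. Otherwise the shortened block has size $k+1$ with $k\geq1$; taking $A\in\End(\R^{n-k-1})$ with only the eigenvalue $c$ and nilpotent part equal to the remaining blocks of $N_+$, one gets $M(c)\sim A\oplus\mathcal{J}_{k+1}(c)$ while $M(-c)$ is conjugate to the direct sum of $-A$ with a single Jordan block of size $k$; this is the second case, and conversely both normal forms plainly satisfy the block-shortening criterion. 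Since conjugating $D$ is the same as conjugating $M(c)$ and $M(-c)$ independently---these being the two distinct generalized eigenspaces---this yields the stated classification. I expect the main obstacle to be the careful execution of the converse step in the third paragraph: producing $\bar\omega_0$ from the module isomorphism, verifying its $\bar D$-invariance, and confirming that its pullback to $\frh$ has kernel exactly the one-dimensional $K$, so that the resulting $\omega$ is genuinely nondegenerate and not merely closed. The Jordan-block bookkeeping of the last step is then routine.
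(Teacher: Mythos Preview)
The paper does not prove this theorem: it is quoted verbatim from \cite{ArroyoBarberisDiazGodoyHernandez} and used as a black box in the proof of Corollary~\ref{cor:symplectic}. So there is no ``paper's own proof'' to compare against.

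Your argument is correct and is essentially the standard one. A couple of points worth tightening. First, when you quotient $\R^n$ by a one-dimensional $K\subset\ker N_+$, the Jordan type of $\overline{N_+}$ depends on which $v$ you pick: different vectors in $\ker N_+$ can shorten different blocks. You implicitly use this in both directions (for necessity, \emph{some} block gets shortened; for sufficiency, you may choose $v$ to be the terminal vector of whichever block you wish to shorten), and it would be cleaner to say so explicitly. Second, the passage from ``$\overline{N_+}$ and $N_-$ have the same Jordan type'' to the existence of a nondegenerate $\bar D$-invariant skew form on $\bar\frh$ uses that $V_+=\R^n/K$ and $V_-=\R^{n-1}$ are automatically isotropic for any $\bar D$-invariant skew form (since $2c\neq0$), so that $\bar\omega_0$ is equivalent to a nondegenerate pairing $V_+\times V_-\to\R$; you state this, and it is exactly what makes the converse step you flag as the ``main obstacle'' routine rather than delicate. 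Finally, note that the displayed $\mathcal{J}_k(c)$ in the second alternative of the statement must have eigenvalue $-c$ for $M(-c)$ to have only the eigenvalue $-c$; your argument produces $-A\oplus\mathcal{J}_k(-c)$, which is the intended conclusion.
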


If $\omega\in\Lambda^2(\R^{2n})^*$ is non-degenerate, we define $$\lie{sp}_\omega=\{A\in\End(\R^{2n})\mid \omega(Av,w)+\omega(v,Aw)=0,\ \ v,w\in\R^{2n}\}.$$

By \cite[Theorem 2.4]{Mehl_Rodman_2016}, a matrix in $\gl(k,\R)$ lies in $\lie{sp}_\omega$ for some non-degenerate $\omega$ if and only if its Jordan type lies in the semigroup spanned by $$2J_{2m}(0),\quad J_{2m+1}(0),\quad J_m(x)+J_m(-x)\text{ with }x\in\R\setminus\{0\},$$ $$C_m(\zeta)+C_m(-\zeta)\text{ with }\zeta\in\C\setminus(\R\cup i\R), \quad C_m(\zeta)\text{ with }\zeta\in i\R,$$ which we will denote by $\mathcal{GL}_{\lie{sp}}$.

\begin{corollary}\label{cor:symplectic}
    An almost abelian Lie algebra $\frg=\R^{2n-1}\rtimes_D\R$ admits a symplectic form if and only if $j(D)$ belongs to one of the following: $$J_{2m}(0)+\mathcal{GL}_{\lie{sp}},\quad J_{0}(c)+\mathcal{GL}_{\lie{sp}},\quad J_m(-c)+J_{m+1}(c)+\mathcal{GL}_{\lie{sp}}, \quad c\in\R\setminus\{0\}.$$
\end{corollary}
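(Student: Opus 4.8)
The plan is to transcribe the three results of \cite{ArroyoBarberisDiazGodoyHernandez} (Theorems \ref{thm:symplecticnilpotent} and \ref{thm:symplectic_c}, together with the summarized second result) into the semigroup language of Jordan types, exactly as was done for the complex case in Corollary \ref{cor:ofArroyoetal}. The key observation is that the semigroup $\mathcal{GL}_{\lie{sp}}$ introduced just before the statement, via \cite[Theorem 2.4]{Mehl_Rodman_2016}, is precisely the set of Jordan types of matrices admitting an invariant non-degenerate skew form, so a block of $D$ that is ``irrelevant'' to the symplectic condition (i.e.\ can be put into the radical of $\omega$ by a change of basis, or sits in a complementary symplectic subspace) contributes an arbitrary element of $\mathcal{GL}_{\lie{sp}}$.

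First I would treat the nilpotent case. Writing $D$ as in \eqref{eqn:genericnilpotent}, Theorem \ref{thm:symplecticnilpotent} gives two cases. In case (1), $\tau$ is odd and $p_i$ is even for every odd $n_i$; splitting off one copy of the smallest odd-size block contributing to $0_\tau$ (a $1\times1$ zero block) yields a $J_0(0)$ summand, and the remaining Jordan type—even multiplicities on odd-length blocks, and an even number of remaining trivial blocks—lies in $\mathcal{GL}_{\lie{sp}}$, since $\mathcal{GL}_{\lie{sp}}$ is spanned by $J_{2m+1}(0)$ (any multiplicity), $2J_{2m}(0)$ (even multiplicity of even-length blocks). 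Thus $j(D)\in J_0(0)+\mathcal{GL}_{\lie{sp}}$, which is the case $c\to 0$ of the middle family, or rather: since $J_0(0) = J_0(0)$ with $c=0$ is \emph{not} of the stated form, one must instead land in $J_{2m}(0)+\mathcal{GL}_{\lie{sp}}$ with $m=0$. In case (2) there is a unique $\ell$ with $n_\ell$, $p_\ell$ both odd; peeling off one copy of $\mathcal{J}_{n_\ell}(0)$ leaves all remaining odd-length blocks with even multiplicity plus even trivial part, i.e.\ an element of $\mathcal{GL}_{\lie{sp}}$, and $n_\ell-1$ is even, so $j(D)\in J_{n_\ell-1}(0)+\mathcal{GL}_{\lie{sp}}=J_{2m}(0)+\mathcal{GL}_{\lie{sp}}$. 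The converse in the nilpotent case is routine: given $j(D)\in J_{2m}(0)+\mathcal{GL}_{\lie{sp}}$, separate the $J_\bullet(0)$ part from any non-nilpotent part (there is none here) and check the parity conditions of Theorem \ref{thm:symplecticnilpotent} hold.

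Next I would handle the general case. By the (summarized) second result of \cite{ArroyoBarberisDiazGodoyHernandez}, any almost abelian Lie algebra admitting a symplectic form has $D = M(\alpha)\oplus Q$ where $M(\alpha)-\alpha\Id$ is nilpotent satisfying the nilpotent symplectic conditions and $Q$ itself carries an invariant symplectic form and does not have $\alpha$ as an eigenvalue—together with the special case $\alpha\neq 0$ described by Theorem \ref{thm:symplectic_c}, where the $-\alpha$-part is tied to the $\alpha$-part. For $\alpha=0$ this gives $j(D)\in J_{2m}(0)+\mathcal{GL}_{\lie{sp}}$ as above (since $j(Q)\in\mathcal{GL}_{\lie{sp}}$). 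For $\alpha=c\neq 0$, Theorem \ref{thm:symplectic_c} offers two subcases: $M(c)=-M(-c)\oplus\mathcal{J}_1(c)$, contributing a lone $J_0(c)$ block with the rest pairing up as $J_m(c)+J_m(-c)\in\mathcal{GL}_{\lie{sp}}$, giving $j(D)\in J_0(c)+\mathcal{GL}_{\lie{sp}}$; or $M(c)=A\oplus\mathcal{J}_{k+1}(c)$, $M(-c)=-A\oplus\mathcal{J}_k(c)$ with $k\geq1$, contributing $J_k(-c)+J_{k+1}(c)$ together with $j(A)+j(-A)\in\mathcal{GL}_{\lie{sp}}$, so $j(D)\in J_k(-c)+J_{k+1}(c)+\mathcal{GL}_{\lie{sp}}$. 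Conversely, given $j(D)$ in one of the three listed families, I would group the Jordan blocks into the eigenvalue-$\alpha$ part and the complement, recognize the complement's type as lying in $\mathcal{GL}_{\lie{sp}}$, realize it by a matrix $Q$ with an invariant symplectic form, and check that the $\alpha$-part meets the relevant conditions of Theorems \ref{thm:symplecticnilpotent} and \ref{thm:symplectic_c}, so that $\R^{2n-1}\rtimes_D\R$ admits a symplectic form.

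The main obstacle I anticipate is bookkeeping rather than conceptual difficulty: one must carefully reconcile the index conventions (the shift between $\mathcal{J}_{m+1}$ and $J_m$, and the ordering constraints $n_1>\cdots>n_k\geq2$ in \eqref{eqn:genericnilpotent}) and, crucially, verify that the ``free'' part of $j(D)$ may genuinely be taken to be an \emph{arbitrary} element of $\mathcal{GL}_{\lie{sp}}$. This requires knowing that $\mathcal{GL}_{\lie{sp}}$ absorbs the leftover pieces in both the nilpotent conditions (even multiplicities of odd-length blocks, even trivial part) and the nonzero-eigenvalue conditions ($J_m(x)+J_m(-x)$ pairs and the complex pairs), which is exactly the content of \cite[Theorem 2.4]{Mehl_Rodman_2016}; I would double-check that the generators listed there match the parity bookkeeping coming out of Theorems \ref{thm:symplecticnilpotent} and \ref{thm:symplectic_c} with no gaps or overlaps. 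Once that matching is confirmed, the statement follows by assembling the pieces exactly as in the proof of Corollary \ref{cor:ofArroyoetal}.
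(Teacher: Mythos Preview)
Your approach is correct and essentially matches the paper's proof: both translate the three results of \cite{ArroyoBarberisDiazGodoyHernandez} into the Jordan-type semigroup language, with the paper invoking \cite[Theorem 5.26]{ArroyoBarberisDiazGodoyHernandez} directly for the general decomposition $D=M(\alpha)\oplus Q$ (this is the ``third result'' alluded to before the corollary) rather than piecing it together as you propose. Two small slips to fix when you write it out: in your parenthetical, $2J_{2m}(0)$ enforces even multiplicity of \emph{odd}-length blocks (size $2m+1$), not even-length; and in the second subcase of Theorem~\ref{thm:symplectic_c}, $\mathcal{J}_{k+1}(c)$ and $\mathcal{J}_k(-c)$ correspond to $J_k(c)$ and $J_{k-1}(-c)$ after the index shift, so reindexing $m=k-1$ gives the family $J_m(-c)+J_{m+1}(c)+\mathcal{GL}_{\lie{sp}}$ with $m\geq0$, as stated.
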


\begin{proof}
    By \cite[Theorem 5.26]{ArroyoBarberisDiazGodoyHernandez}, $D$ defines a symplectic almost abelian Lie algebra if and only if, up to conjugation, it falls in one of these categories: \begin{enumerate}
        \item $D=M(0)\oplus Q$, where $Q$ is in $\lie{sp}_\omega$ for some $\omega$, $M(0)$ is as in Theorem \ref{thm:symplecticnilpotent}, and $Q$ does not have $0$ as an eigenvalue. This implies $$j(D)\in J_{2m}(0)+\mathcal{GL}_{\lie{sp}}, \quad m\geq0.$$
        \item $D=M(c)\oplus M(-c)\oplus Q$, where $M(c)$, $M(-c)$ are as in Theorem \ref{thm:symplectic_c}, and $Q$ is a matrix in $\lie{sp}_\omega$ for some $\omega$ which does not have $\pm c$ as an eigenvalue. This gives $$j(D)\in J_{0}(c)+\mathcal{GL}_{\lie{sp}}\quad\text{or}\quad j(D)\in J_m(-c)+J_{m+1}(c)+\mathcal{GL}_{\lie{sp}},\quad m\geq0.$$
        \item $D=\mathcal{J}_1(c)\oplus Q$, where $Q$ is as in the second point. This results in $$j(D)\in J_0(c)+\mathcal{GL}_{\lie{sp}}.$$
    \end{enumerate}
    It is clear that every matrix with Jordan type as in the statement can be written in one of the above forms.
\end{proof}

\begin{theorem}\label{thm:notpk}
    The almost abelian Lie algebra $\frg=\R^{2n-1}\rtimes_D\R$ admits both a complex and a symplectic structure but not a pseudo-Kähler structure if and only if $$j(D)\in 2J_m(-c)+J_m(c)+J_{m+1}(c)+\mathcal{Q}, \quad c\in\R\setminus\{0\},\quad m\geq0.$$
\end{theorem}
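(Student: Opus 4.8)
The plan is to work entirely inside the free abelian semigroup $\mathcal{GL}$ and argue combinatorially with Jordan types. Write $t=j(D)$, write $n_{J_{s-1}(\alpha)}(t)$ for the multiplicity of the block $J_{s-1}(\alpha)$ in $t$, and let $\mathrm{Odd}(t)$ be the set of real Jordan blocks occurring in $t$ with odd multiplicity. The first thing I would record is that $\mathcal{GL}_\C$, $\mathcal{GL}_{\lie{sp}}$ and $\mathcal{Q}$ are all \emph{saturated} sub-semigroups of $\mathcal{GL}$: membership is cut out purely by congruences mod $2$ and by equalities among coordinates (for $\mathcal{GL}_\C$: all real blocks have even multiplicity; for $\mathcal{GL}_{\lie{sp}}$: odd-size blocks at $0$ have even multiplicity and $n_{J_j(x)}=n_{J_j(-x)}$, $n_{C_j(\zeta)}=n_{C_j(-\zeta)}$ for $x\neq0$, $\zeta\notin\R\cup i\R$; for $\mathcal{Q}$: the conjunction of both lists). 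Consequently each is closed under subtracting a sub-element that still lies in $\mathcal{GL}$, and comparing the defining conditions yields the identity $\mathcal{Q}=\mathcal{GL}_\C\cap\mathcal{GL}_{\lie{sp}}$, which I will use repeatedly.

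For the ``if'' direction, let $t=2J_m(-c)+J_m(c)+J_{m+1}(c)+q$ with $q\in\mathcal{Q}$ and $c\neq0$. Since $q,2J_m(-c)\in\mathcal{GL}_\C$ we have $t-J_m(c)-J_{m+1}(c)=2J_m(-c)+q\in\mathcal{GL}_\C$, so $\frg$ is complex by Corollary \ref{cor:ofArroyoetal}; likewise $t-J_m(-c)-J_{m+1}(c)=J_m(-c)+J_m(c)+q\in\mathcal{GL}_{\lie{sp}}$ (both $J_m(c)+J_m(-c)$ and $q$ lie in $\mathcal{GL}_{\lie{sp}}$), so $\frg$ is symplectic by Corollary \ref{cor:symplectic}. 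But $\mathrm{Odd}(t)=\{J_m(c),J_{m+1}(c)\}$ has exactly two elements, both at the nonzero eigenvalue $c$; the three pseudo-Kähler types of Theorem \ref{thm:jordantypes} have $\mathrm{Odd}$ equal to $\{J_0(\alpha)\}$, $\{J_\ell(0),J_{\ell+1}(0)\}$ and $\{J_0(\alpha)\}$ respectively, none of which matches, so $\frg$ is not pseudo-Kähler.

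For the ``only if'' direction, assume $\frg$ is complex and symplectic. Corollary \ref{cor:ofArroyoetal} gives either $\mathrm{Odd}(t)=\{J_0(\alpha)\}$ (case C1) or $\mathrm{Odd}(t)=\{J_k(\alpha),J_{k+1}(\alpha)\}$ with common eigenvalue $\alpha$ and consecutive sizes (case C2), while Corollary \ref{cor:symplectic} puts $t$ in one of $J_{2m}(0)+\mathcal{GL}_{\lie{sp}}$, $J_0(c)+\mathcal{GL}_{\lie{sp}}$, or $J_m(-c)+J_{m+1}(c)+\mathcal{GL}_{\lie{sp}}$. I would go through the six combinations. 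In case C1, each symplectic alternative either forces a second odd-multiplicity block (contradicting $|\mathrm{Odd}(t)|=1$) or, after removing a suitable combination of small blocks and invoking $\mathcal{Q}=\mathcal{GL}_\C\cap\mathcal{GL}_{\lie{sp}}$, puts $t$ into the first or third pseudo-Kähler type — excluded. In case C2, if $\mathrm{Odd}(t)$ is concentrated at $0$ then from $J_{2m}(0)+\mathcal{GL}_{\lie{sp}}$, shifting off the even-size block $J_{k+1}(0)$ leaves a tail in $\mathcal{Q}$, so $t$ is the second pseudo-Kähler type — excluded; the coset $J_0(c)+\mathcal{GL}_{\lie{sp}}$, and the coset $J_m(-c)+J_{m+1}(c)+\mathcal{GL}_{\lie{sp}}$ when the two odd blocks do not both lie on the ``$+c$ side'', are incompatible with $\mathrm{Odd}(t)$ carrying two blocks at a single eigenvalue, because the $\pm c$-balance in $\mathcal{GL}_{\lie{sp}}$ would force a mirror block. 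The surviving configuration should be C2 together with $t\in J_m(-c)+J_{m+1}(c)+\mathcal{GL}_{\lie{sp}}$, $\alpha$ equal to the eigenvalue bearing both odd blocks, $k=m$; then subtracting $2J_m(-\alpha)+J_m(\alpha)+J_{m+1}(\alpha)$ from $t$ and using saturation one checks the remainder lies in $\mathcal{GL}_\C\cap\mathcal{GL}_{\lie{sp}}=\mathcal{Q}$, giving exactly the asserted form.

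The bookkeeping in this six-case analysis is where the real work lies: for each relevant eigenvalue one must simultaneously track the parity of the block multiplicities (controlled by the complex condition) and the $\lambda\leftrightarrow-\lambda$ imbalance of size $\pm1$ (controlled by the symplectic condition), and show that in precisely one configuration these cannot be reconciled into a pseudo-Kähler type. I expect the genuinely delicate step to be the last symplectic case $J_m(-c)+J_{m+1}(c)+\mathcal{GL}_{\lie{sp}}$: there the two forced odd-multiplicity blocks $J_m(\pm c)$ and $J_{m+1}(\pm c)$ sit a priori at possibly different eigenvalues, so one must invoke the complex constraint to pin them to a common eigenvalue and then verify that the remaining doubled block necessarily sits at the opposite eigenvalue and at the smaller of the two sizes, which is what produces the block $2J_m(-c)$ in the statement.
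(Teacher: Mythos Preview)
Your overall strategy---reducing everything to the parity set $\mathrm{Odd}(t)$ and the identity $\mathcal{Q}=\mathcal{GL}_\C\cap\mathcal{GL}_{\lie{sp}}$---parallels the paper's (which phrases the same fact as a direct-sum decomposition of $(\GGL_\C+\GGL_{\lie{sp}})/\overline{\mathcal{Q}}$), and your ``if'' direction is correct. The problem is precisely the step you flag as delicate: your expectation that the doubled block must sit at the \emph{smaller} of the two sizes cannot be verified, because it is false.

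Concretely, when $j'=J_k(\alpha)+J_{k+1}(\alpha)$ with $\alpha\neq0$ and $j''=J_m(-c)+J_{m+1}(c)$, the constraint $j''-j'\in\GGL_\C+\GGL_{\lie{sp}}$ forces $k=m$ but allows \emph{both} $\alpha=c$ (giving $j_\C\equiv 2J_m(-c)$, the case in the statement) and $\alpha=-c$ (giving $j_\C\equiv 2J_{m+1}(c)$). The second branch produces the family
\[
j(D)\in 2J_{m+1}(-c)+J_m(c)+J_{m+1}(c)+\mathcal{Q},
\]
which is complex, symplectic and not pseudo-K\"ahler, yet does not lie in the coset of the theorem. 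An explicit witness is $t=J_1(-1)+J_2(-1)+2J_2(1)$: one has $\mathrm{Odd}(t)=\{J_1(-1),J_2(-1)\}$; it is complex since $t-J_1(-1)-J_2(-1)=2J_2(1)\in\mathcal{GL}_\C$, symplectic since $t-J_1(-1)-J_2(1)=J_2(1)+J_2(-1)\in\mathcal{GL}_{\lie{sp}}$, and not pseudo-K\"ahler since $\mathrm{Odd}(t)$ matches none of the three types in Theorem~\ref{thm:jordantypes}; but $n_{J_1(1)}(t)=0$, so $t\notin 2J_1(1)+J_1(-1)+J_2(-1)+\mathcal{Q}$, and no other choice of $(m,c)$ is compatible with $\mathrm{Odd}(t)$. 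The paper's own enumeration (its item~6) likewise records only the $\alpha=c$ branch, so the statement as written appears to omit this second family; your sketch cannot be completed because the claim you set out to verify in the final case is not true.
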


\begin{proof}
    We first characterize derivations $D$ such that $\R^{2n-1}\rtimes_D\R$ admits both a complex and a symplectic structure. For such a $D$, write $$j(D)=j'+j_\C=j''+j_{\lie{sp}},\quad j_\C\in\mathcal{GL}_\C,\quad j_{\lie{sp}}\in \mathcal{GL}_{\lie{sp}},$$ where by Corollary \ref{cor:ofArroyoetal} and Corollary \ref{cor:symplectic} we have \begin{equation}\label{eqn:jprimejsecond}
        \begin{aligned}
            j'&\in\{J_0(\alpha), J_k(\alpha) + J_{k+1}(\alpha)\mid k\geq0,\alpha\in\R\},\\
            j''&\in\{J_{2m}(0), J_0(c), J_m(-c)+J_{m+1}(c)\mid m\geq0,c\neq0\}.
        \end{aligned}
    \end{equation}
    
    In order to relate $j'$ and $j''$, we replace the free abelian semigroups $\mathcal{GL}$ with the free abelian group over the same generators, denoted $\GGL$. We will use analogous notation for the other semigroups we have defined. By construction $$j''-j'=j_\C-j_{\lie{sp}}\in\GGL_\C+\GGL_{\lie{sp}}.$$
    
    Working mod $\overline{\mathcal{Q}}$, we have \begin{equation}\label{eqn:jjprimejsecond}
        \frac{\GGL_\C+\GGL_{\lie{sp}}}{\overline{\mathcal{Q}}}=\frac{\GGL_\C}{\overline{\mathcal{Q}}}\oplus \frac{\GGL_{\lie{sp}}}{\overline{\mathcal{Q}}},
    \end{equation} where the first factor is generated by cosets of elements $2J_m(x)$ and $C_m(\zeta)$, and the second factor by cosets of $J_{2m+1}(0)$, $J_m(x)+J_m(-x)$; as usual, $x$ varies in $\R\setminus\{0\}$ and $\zeta$ in $\C\setminus(\R\cup i\R)$. It follows from \eqref{eqn:jjprimejsecond} that given $j'$ and $j''$, $j_\C$ and $j_{\lie {sp}}$ are uniquely determined mod $\overline{\mathcal{Q}}$. Considering the possible choices in \eqref{eqn:jprimejsecond}, we obtain (writing $j_\C,j_{\lie{sp}}$ mod $\mathcal{Q}$): \begin{enumerate}
        \item $j'=J_0(0)=j''$, $j_\C=0=j_{\lie{sp}}$;
        \item $j'=J_{2m}(0)+J_{2m+1}(0)$, $j''=J_{2m}(0)$, $j_\C=0$, $j_{\lie{sp}}= J_{2m+1}(0)$;
        \item $j'=J_{2m+1}(0)+J_{2m+2}(0)$, $j''=J_{2m+2}(0)$, $j_\C=0$, $j_{\lie{sp}}= J_{2m+1}(0)$;
        \item $j'=J_0(c)=j''$, $j_\C=0=j_{\lie{sp}}$;
        \item $j'=J_0(c)$, $j''=J_0(-c)$, $j_\C=2J_0(-c)$, $j_{\lie{sp}}=J_0(c)+J_0(-c)$;
        \item $j'=J_m(c)+J_{m+1}(c)$, $j''=J_m(-c)+J_{m+1}(c)$, $j_\C=2J_m(-c)$, $j_{\lie{sp}}=J_m(c)+J_m(-c)$.
    \end{enumerate}
    
    Summing up, if $\R^{2n-1}\rtimes_D\R$ admits both a complex and a symplectic structure, $j(D)$ is obtained by adding an element of $\mathcal{Q}$ to an element that has one of the following forms: $$J_0(0),\quad J_{2m}(0)+J_{2m+1}(0),\quad J_{2m+1}(0)+J_{2m+2}(0),$$ $$J_0(c),\quad J_0(c)+2J_0(-c),\quad 2J_m(-c)+J_m(c)+J_{m+1}(c).$$
    
    Conversely, it is clear that any $D$ obtained in this way determines a Lie algebra that admits both a complex and a symplectic structure. Comparing with Theorem \ref{thm:jordantypes}, we see that those that do not admit a pseudo-Kähler structure are precisely those in the last family.
\end{proof}

\begin{example}\label{example:6d_CS_noPK}
    The blocks $2J_m(-c)+J_m(c)+J_{m+1}(c)$ in Theorem~\ref{thm:notpk} can only appear in a matrix of order at least $4m+5$. Thus, the lowest possible dimension of an almost abelian Lie algebra that can admit both a complex and a symplectic structure but not a pseudo-K\"ahler structure is six; in this case, the Lie algebra is determined by the derivation $$D=\begin{smallpmatrix}
        -c & 0 & 0 & 0 & 0\\
        0 & -c & 0 & 0 & 0\\
        0 & 0 & c & 0 & 0\\
        0 & 0 & 0 & c & 1\\
        0 & 0 & 0 & 0 & c
    \end{smallpmatrix};$$ the parameter $c\neq0$ can be eliminated by rescaling.
\end{example}


\section{Pseudo-Kähler-Einstein extensions}\label{section:PKE_extensions}

In this final section we apply the results of \cite{Conti_Rossi_SegnanDalmasso_2023} to obtain a pseudo-Kähler-Einstein Lie algebra of nonzero scalar curvature $(\tilde{\frg},\tilde{J},\tilde{g})$ from a nilpotent almost abelian pseudo-Kähler Lie algebra $(\frg,J,g)$. This construction boils down to finding a particular derivation $\check{D}$ of $\frg$. We write down the general algebraic equations that determine the admissible $\check D$ for a fixed $(\frg,J,g)$, and exemplify the construction by obtaining all the pseudo-Kähler-Einstein extensions (in this sense) of a six-dimensional non-abelian nilpotent almost abelian pseudo-Kähler Lie algebra. We will not consider the case where $\g$ is abelian, which is completely described in \cite{Conti_Rossi_SegnanDalmasso_2023}.\medskip

First of all we recall the result from \cite{Conti_Rossi_SegnanDalmasso_2023}, adjusting a sign for our convention $\omega(X,Y)=g(JX,Y)$. As a matter of notation, we will write $\g\oplus_{\alpha}\Span{b}$ for the central extension obtained from $\g$ by declaring that $\d b^*=\alpha$, with $\{b^*\}$ the dual basis of $\{b\}$.

\begin{proposition}[{\cite[Corollary 3.6]{Conti_Rossi_SegnanDalmasso_2023}}]\label{prop:constructse}
    Let $(\frg,J,g)$ be a nilpotent pseudo-Kähler Lie algebra and let $\check{D}$ be a derivation of $\frg$ of the form $\check{D}=\Id+\check{D}^a$, where $\check{D}^a$ is skew-symmetric relative to $g$ and commutes with $J$. Then the semidirect product $\tilde{\frg}=(\frg\oplus_{-2\omega}\Span{b})\rtimes\Span{e_0}$, where $$[e_0,X]=\check{D}X,\quad[e_0,b]=2b,\quad X\in\frg,$$ has a pseudo-Kähler structure $(\tilde{J},\tilde{g})$ given by $$\tilde{g}=g-b^*\otimes b^*-e^0\otimes e^0,\quad \tilde{J}(X)=JX,\quad \tilde{J}(b)=e_0,\quad \tilde{J}(e_0)=-b,\quad X\in\frg,$$ satisfying $\widetilde{\Ric}=(\dim\tilde{\g}+2)\tilde{g}$.
\end{proposition}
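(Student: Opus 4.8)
The plan is to verify, in order: (i) that $\tilde{\frg}$ is a Lie algebra; (ii) that $(\tilde J,\tilde g)$ is almost pseudo-Hermitian, that $\tilde J$ is integrable, and that $\tilde\omega:=\tilde g(\tilde J\cdot,\cdot)$ is closed, so that $(\tilde J,\tilde g)$ is pseudo-Kähler; and (iii) that $\widetilde{\Ric}=(\dim\tilde{\frg}+2)\tilde g$. The single fact feeding (i) and (ii), and which also simplifies (iii), is that the hypotheses on $\check D$ force $\check D^a\in\mathfrak{u}(\frg,J,g)\subset\mathfrak{sp}_\omega$ (with the convention $\omega=g(J\cdot,\cdot)$), so that $\omega(\check DX,Y)+\omega(X,\check DY)=2\omega(X,Y)$ for all $X,Y\in\frg$; that is, $\check D$ acts on $\omega$ by multiplication by $2$, matching $[e_0,b]=2b$.

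For (i): since $(\frg,J,g)$ is pseudo-Kähler, $\d\omega=0$, so $\frg\oplus_{-2\omega}\Span{b}$ is a Lie algebra, and the only Jacobi identity in $\tilde{\frg}$ that is not immediate, namely $[e_0,[X,Y]]=[[e_0,X],Y]+[X,[e_0,Y]]$ for $X,Y\in\frg$, unwinds to precisely $\omega(\check DX,Y)+\omega(X,\check DY)=2\omega(X,Y)$. For (ii): $\tilde J^2=-\Id$ and $\tilde g$-compatibility of $\tilde J$ are read off the definitions; integrability is checked by evaluating the Nijenhuis tensor on pairs from a basis of $\frg$ together with $\{b,e_0\}$, where the only nontrivial contributions reduce to $N_J=0$ on $\frg$ and to $N_{\tilde J}(X,b)=[\check D,J]X=0$ (using that $\check D^a$ commutes with $J$); and closedness follows from computing $\d b^*=-2\omega+2\,b^*\wedge e^0$, $\d e^0=0$ and $\d\omega=-2\,e^0\wedge\omega$ in $\tilde{\frg}$ (this last from $\omega(\check DX,Y)+\omega(X,\check DY)=2\omega(X,Y)$), which give $\d\tilde\omega=\d(\omega-b^*\wedge e^0)=0$.

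Step (iii) is the substance. First compute the Levi-Civita connection $\tilde\nabla$ from the Koszul formula, using that $\nabla^\frg$ parallelizes $g$, $J$ and $\omega$ and that $\check D+\check D^*=2\Id$: for $X,Y\in\frg$ one finds $\tilde\nabla_XY=\nabla^\frg_XY+\omega(X,Y)b-g(X,Y)e_0$, $\tilde\nabla_Xb=\tilde\nabla_bX=JX$, $\tilde\nabla_Xe_0=-X$, $\tilde\nabla_{e_0}X=\check D^aX$, $\tilde\nabla_bb=2e_0$, $\tilde\nabla_be_0=-2b$ and $\tilde\nabla_{e_0}b=\tilde\nabla_{e_0}e_0=0$. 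From these one computes the curvature $\tilde R(X,Y)=[\tilde\nabla_X,\tilde\nabla_Y]-\tilde\nabla_{[X,Y]}$; for $X,Y,Z\in\frg$,
\[
\tilde R(X,Y)Z=R^\frg(X,Y)Z+g(Y,Z)X-g(X,Z)Y+\omega(Y,Z)JX-\omega(X,Z)JY-2\omega(X,Y)JZ,
\]
while the remaining curvature operators needed for the trace are short computations yielding multiples of $b$, $e_0$, $Z$ or $JZ$. One then traces over a pseudo-orthonormal frame of $\tilde{\frg}$ built from one of $\frg$ together with $b,e_0$ (each of $\tilde g$-norm $-1$, hence contributing with sign $-1$ in the Ricci trace), invoking that a nilpotent pseudo-Kähler Lie algebra is Ricci-flat, so $\Ric^\frg=0$: writing $2n=\dim\frg$, contracting the displayed tensor gives $\widetilde{\Ric}(Z,W)=(2n+4)g(Z,W)$ for $Z,W\in\frg$, analogous short computations give $\widetilde{\Ric}(b,b)=\widetilde{\Ric}(e_0,e_0)=-(2n+4)$, and all mixed components vanish. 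Since $\tilde g(b,b)=\tilde g(e_0,e_0)=-1$ and $\dim\tilde{\frg}+2=2n+4$, this is exactly $\widetilde{\Ric}=(\dim\tilde{\frg}+2)\tilde g$.

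The main obstacle is entirely bookkeeping in step (iii): one must track carefully the $b$- and $e_0$-components of $\tilde\nabla$ and of each $\tilde R(\cdot,\cdot)\cdot$, organize the many cancellations (for instance $2\tilde\nabla_{e_i}e_0$ against $-\tilde\nabla_b(Je_i)$ inside $\tilde R(e_i,b)b$, or $\check D^aJZ-J\check DZ=-JZ$ inside $\tilde R(e_0,Z)b$), and handle the non-definite directions $b,e_0$ with the correct signs in the Ricci trace. There is no conceptual difficulty beyond organizing this computation; steps (i) and (ii) are routine once the identity $\omega(\check DX,Y)+\omega(X,\check DY)=2\omega(X,Y)$ is in hand.
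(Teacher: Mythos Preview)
The paper does not prove this proposition; it is quoted verbatim as \cite[Corollary~3.6]{Conti_Rossi_SegnanDalmasso_2023} and used as a black box. Your proposal, by contrast, supplies a direct self-contained verification. The identity $\omega(\check DX,Y)+\omega(X,\check DY)=2\omega(X,Y)$ is indeed the hinge for (i) and (ii), and your Levi-Civita formulas and the displayed expression for $\tilde R(X,Y)Z$ are correct (the $b$- and $e_0$-components in $\tilde R(X,Y)Z$ vanish via $\nabla^\frg\omega=0$, $\nabla^\frg g=0$ and torsion-freeness, which you might say explicitly). The Ricci computation is sound: the $\frg$-trace of the displayed curvature contributes $(2n+2)g(Z,W)$, and $\tilde R(b,Z)W=g(Z,W)b+\omega(Z,W)e_0$ together with the $e_0$-component of $\tilde R(e_0,Z)W$ each add a further $g(Z,W)$, giving $(2n+4)g(Z,W)$; the $(b,b)$ case comes out to $-(2n+4)$ exactly as you describe, and the remaining entries follow from $\tilde J$-invariance of $\widetilde{\Ric}$.

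Two minor points worth tightening. First, the fact that $\Ric^\frg=0$ for a nilpotent pseudo-Kähler Lie algebra is essential and not entirely trivial; you invoke it correctly, but it deserves a reference (it is \cite[Lemma~6.4]{FinoPartonSalamon}, also recalled in the paper just after Proposition~\ref{prop:curvature_isotropic_h0}). Second, the vanishing of the mixed Ricci components $\widetilde{\Ric}(Z,b)$, $\widetilde{\Ric}(Z,e_0)$, $\widetilde{\Ric}(b,e_0)$ is asserted but not argued; they do vanish, but the cleanest route is to note that once you know $(\tilde J,\tilde g)$ is pseudo-Kähler the Ricci tensor is $\tilde J$-invariant, which forces $\widetilde{\Ric}(b,e_0)=\widetilde{\Ric}(\tilde Jb,\tilde Je_0)=-\widetilde{\Ric}(b,e_0)$ and reduces $\widetilde{\Ric}(Z,b)$, $\widetilde{\Ric}(Z,e_0)$ to a single computation, which in turn comes down to $\sum_i\varepsilon_i\,\omega(e_i,Z)\,g(Je_i,?)$-type traces and the skewness of $\check D^a$.
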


We particularize Proposition \ref{prop:constructse} to the almost abelian case. For that we need the general form of a derivation on an almost abelian Lie algebra.

\begin{lemma}\label{lemma:derivationsalmostabelian}
    Let $\frg=\frh\rtimes_D\Span{e_{2n}}$ be an almost abelian Lie algebra. For $K\in\End(\frh)$, $\alpha\in\frh^*$, $u\in\frh$, $k\in\R$, the linear endomorphism of $\frg$ given by $$K+\alpha\otimes e_{2n}+e^{2n}\otimes (ke_{2n}+u)$$ is a derivation if and only if $[K,D]=kD$, and either $\alpha=0$ or $\Im D\subset\ker\alpha\subset\ker D$.
\end{lemma}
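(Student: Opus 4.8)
The plan is to write an arbitrary linear endomorphism of $\frg=\frh\rtimes_D\Span{e_{2n}}$ in block form with respect to the decomposition $\frg=\frh\oplus\Span{e_{2n}}$, namely
\[
\Phi=K+\alpha\otimes e_{2n}+e^{2n}\otimes u+k\,e^{2n}\otimes e_{2n},
\]
where $K\in\End(\frh)$, $\alpha\in\frh^*$, $u\in\frh$, $k\in\R$, and then impose the derivation condition $\Phi[X,Y]=[\Phi X,Y]+[X,\Phi Y]$ by testing it on the two relevant types of pairs of basis vectors: pairs $X,Y\in\frh$, and pairs $X\in\frh$, $Y=e_{2n}$. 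Since $\frh$ is abelian, only brackets of the form $[e_{2n},X]=DX$ are nonzero, so this is a finite check.

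First I would test $X,Y\in\frh$. Then $[X,Y]=0$, so we need $[\Phi X,Y]+[X,\Phi Y]=0$. Now $\Phi X=KX+\alpha(X)e_{2n}$, and $[\Phi X,Y]=[\alpha(X)e_{2n},Y]=\alpha(X)DY$ because $[\frh,\frh]=0$; similarly $[X,\Phi Y]=-\alpha(Y)DX$. So the condition becomes $\alpha(X)DY=\alpha(Y)DX$ for all $X,Y\in\frh$. This is precisely the statement that the bilinear map $(X,Y)\mapsto \alpha(X)DY-\alpha(Y)DX$ vanishes; a standard argument (choose $Y$ with $\alpha(Y)\neq0$ if $\alpha\neq0$) shows this is equivalent to: $\alpha=0$, or $D$ has rank $\le 1$ with $\Im D\subset\ker\alpha$ — more precisely, for every $X$ we get $\alpha(X)DY=\alpha(Y)DX$, which forces $DX\in\Span{DY}$ for a fixed $Y$ with $\alpha(Y)\ne0$, hence $\Im D$ is at most one-dimensional and spanned by $DY$; evaluating $\alpha$ on $DX=\frac{\alpha(X)}{\alpha(Y)}DY$ gives $\alpha(DX)=\frac{\alpha(X)}{\alpha(Y)}\alpha(DY)$, and feeding this back one obtains $\alpha(DY)=0$, i.e. $\Im D\subset\ker\alpha$; combined with $DX\in\Span{DY}\subset\ker\alpha$ this reads $\Im D\subset\ker\alpha\subset\ker D$ (the last inclusion because $X\in\ker\alpha$ forces $DX\in\Span{DY}$ and $\alpha(X)DY=0$, so $DX=0$). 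I would present this cleanly rather than casewise.

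Next I would test $X\in\frh$, $Y=e_{2n}$. Here $[X,e_{2n}]=-DX$, so the derivation condition is $\Phi(-DX)=[\Phi X,e_{2n}]+[X,\Phi e_{2n}]$. The left side is $-K(DX)-\alpha(DX)e_{2n}$. For the right side, $[\Phi X,e_{2n}]=[KX+\alpha(X)e_{2n},e_{2n}]=-D(KX)$, and $\Phi e_{2n}=u+ke_{2n}$, so $[X,\Phi e_{2n}]=[X,u+ke_{2n}]=-[e_{2n},u]-k[e_{2n},X]\cdot(-1)$; being careful with signs, $[X,ke_{2n}]=-kDX$ and $[X,u]=0$, while there is no $[e_{2n},u]$ term acting — wait, $\Phi e_{2n}=u+ke_{2n}$ has a component $u\in\frh$, but $[X,u]=0$ since $\frh$ abelian — so $[X,\Phi e_{2n}]=-kDX$. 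Altogether the right side is $-D(KX)-kDX$, and matching with the left side gives $K(DX)+\alpha(DX)e_{2n}=D(KX)+kDX$; since the right side lies in $\frh$, this forces $\alpha(DX)=0$ for all $X$ (i.e. $\Im D\subset\ker\alpha$, already obtained above) and $KDX-DKX=kDX$, i.e. $[K,D]=kD$ as endomorphisms of $\frh$. The parameter $u$ and the scalar $k$ in the $e^{2n}\otimes e_{2n}$ slot are otherwise unconstrained, which matches the way the statement packages them as $ke_{2n}+u$.

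Finally I would note the converse: if $[K,D]=kD$ and ($\alpha=0$ or $\Im D\subset\ker\alpha\subset\ker D$), then reversing the two computations above shows $\Phi$ satisfies the derivation identity on all pairs of basis vectors, hence on all of $\frg$ by bilinearity. The only mild subtlety — the ``main obstacle'' — is getting the sign conventions and the $\Span{\cdot}$ bookkeeping exactly right in the mixed case $Y=e_{2n}$, and stating the rank-one characterization $\Im D\subset\ker\alpha\subset\ker D$ crisply; everything else is a routine unwinding of $\ad(e_{2n})|_\frh=D$ together with $[\frh,\frh]=0$.
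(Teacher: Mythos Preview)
Your approach is essentially the same as the paper's: test the derivation identity on pairs $X,Y\in\frh$ and on pairs $(X,e_{2n})$, using that the only nonzero brackets are $[e_{2n},X]=DX$. The computations and conclusions match.

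One point to correct: in the first test you claim that $\alpha(X)DY=\alpha(Y)DX$ alone forces $\Im D\subset\ker\alpha$. It does not. Take $\frh=\R^2$, $D=e_2\otimes e^2$, $\alpha=e^2$: the identity holds for all $X,Y$, yet $\alpha(De_2)=1$. The first test only yields $\ker\alpha\subset\ker D$ (your argument for that inclusion is fine); the inclusion $\Im D\subset\ker\alpha$ is exactly the content of $\alpha\circ D=0$, which you correctly extract from the second test. The paper organizes it this way too. Since you recover $\alpha\circ D=0$ in the second step, your overall argument is unaffected; just drop the ``feeding this back'' claim.
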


\begin{proof}
    By definition, the linear map is a derivation if and only if $$0=[\alpha(X)e_{2n},Y]+[X,\alpha(Y)e_{2n}]=\alpha(X)DY-\alpha(Y)DX,$$
    $$KDX+\alpha(DX)e_{2n}=[e_{2n},KX]+[u+ke_{2n},X]=DKX+kDX,$$ for all $X,Y\in\frh$. The first equation gives that $\alpha(X)Y-\alpha(Y)X$ lies in $\ker D$; as $X,Y$ vary, this means that either $\alpha=0$ or $\ker\alpha\subset\ker D$. The second equation implies that $\alpha\circ D=0$ and $[K,D]=kD$.
\end{proof}

\begin{remark}
    As observed in \cite{Avetisyan_2022}, if $\alpha$ is nonzero then $\ker D\supset \ker\alpha\supset\im D$ has at most codimension one, i.e.\ $D$ is zero or nilpotent of rank one. This only occurs when $\g$ is abelian or the direct sum of an abelian factor with the three-dimensional Heisenberg Lie algebra $\mathfrak{heis}_3$.
\end{remark}

We first consider the non-isotropic case.

\begin{proposition}\label{prop:ESnil_non-isotropic}
    Let $(\frg,J,g)$ be a nilpotent almost abelian pseudo-Kähler Lie algebra with $\frh_0$ non-isotropic, and let $\check D$ be a derivation of $\frg$ satisfying the conditions of Proposition \ref{prop:constructse}. Then \begin{equation}\label{eq:D_and_check_D_non_isotropic}
        D=\begin{pmatrix}
        A&0\\
        0&0
    \end{pmatrix},\quad\check{D}=\begin{pmatrix}
        K_1 & w & Jw\\
        -w^\flat & 1 & h\\
        -(Jw)^\flat & -h & 1
    \end{pmatrix},\end{equation} where $A$ is nilpotent, $K_1-\Id_{\frh_1}$ is skew-Hermitian, $[K_1,A]=A$, and $Aw=0$.
\end{proposition}

\begin{proof}
    Let $\check{D}$ be a derivation commuting with $J$ and such that $\check D^s=\Id$. In terms of the decomposition $\g_0(t,a,\varepsilon)=\h_1\oplus\Span{e_{2n-1}}\oplus\Span{e_{2n}}$, we can write $$\check{D}=\begin{pmatrix}
        K_1 & w & Jw\\
        -w^\flat & 1 & h\\
        -(Jw)^\flat & -h & 1
    \end{pmatrix},$$ with $K_1\in\End(\h_1)$, $w\in\h_1$, $h\in\R$; and where $K_1-\Id$ is skew-Hermitian. With the notation of Lemma \ref{lemma:derivationsalmostabelian}, we have $$K=K_1-w^\flat\otimes e_{2n-1} + e^{2n-1}\otimes (w + e_{2n-1}),$$ $$\alpha=-(Jw)^\flat - he^{2n-1},\quad u=Jw+he_{2n-1},\quad k=1.$$
    
    Writing $D$ as in \eqref{eq:D_positive_g0}, with $D$ nilpotent, and imposing $D=[K,D]$, we find $$\begin{pmatrix}
        A & 0 \\
        0 & 0
    \end{pmatrix}=\left[\begin{pmatrix}
        K_1 & w \\
        -w^\flat & 1
    \end{pmatrix},\begin{pmatrix}
        A & 0 \\
        0 & 0
    \end{pmatrix}\right]=\begin{pmatrix}
        [K_1,A] & -Aw \\
        (Aw)^\flat & 0
    \end{pmatrix}.$$

    Then $[K_1,A]=A$ and $Aw=0$.
\end{proof}

Let $(\frg,J,g)$ be a nilpotent almost abelian pseudo-Kähler Lie algebra of dimension six with $\frh_0$ non-isotropic. Then, by Proposition \ref{prop:class_6d_non-isotropic}, there is a basis $\{e_1,\ldots,e_6\}$ of $\frg$ such that $Je_1=e_2$, $Je_3=e_4$, $Je_5=e_6$, and the derivation $D=\ad(e_6)|_\frh$ and metric $g$ take the form \begin{equation}\label{eq:6d_nilpotent_non-isotropic}
D=\begin{smallpmatrix}
    0 & 0 & 1 & 0 & 0 \\
    0 & 0 & 0 & 1 & 0 \\
    0 & 0 & 0 & 0 & 0 \\
    0 & 0 & 0 & 0 & 0 \\
    0 & 0 & 0 & 0 & 0
\end{smallpmatrix},\quad g=e^1\odot e^4-e^2\odot e^3+\varepsilon(e^5\otimes e^5+e^6\otimes e^6)\end{equation} for $\varepsilon=\pm1$. Using this we obtain the following result.

\begin{proposition}\label{prop:KEextension6d_non_isotropic}
    The pseudo-Kähler-Einstein Lie algebras obtained by applying the construction of Proposition \ref{prop:constructse} to a non-abelian nilpotent almost abelian pseudo-Kähler Lie algebra of dimension six with $\frh_0$ non-isotropic are \begin{equation}\label{eqn:explicitKEext_noniso}
        \bigl((\R^5\rtimes_D\R)\oplus_{-2\omega}\Span{e_7}\bigr)\rtimes_{E}\Span{e_0},\quad \omega=-e^{13}-e^{24}+\varepsilon e^{56},
    \end{equation} where the pseudo-Kähler structure is given by \begin{gather*}
        \tilde{J}e_1=e_2,\,\tilde{J}e_3=e_4,\,\tilde{J}e_5=e_6,\,\tilde{J}e_7=e_0,\\
        \tilde{g}=e^1\odot e^4-e^2\odot e^3+\varepsilon(e^5\otimes e^5+e^6\otimes e^6)-e^7\otimes e^7-e^0\otimes e^0,
    \end{gather*} and for some real parameters $h,x,w_1,w_2\in\R$ we have $$E=\begin{smallpmatrix}
        \frac{3}{2} & x & 0 & 0 & w_1 & -w_2 & 0\\
        -x & \frac{3}{2} & 0 & 0 & w_2 & w_1 & 0\\
        0 & 0 & \frac{1}{2} & x & 0 & 0 & 0\\
        0 & 0 & -x & \frac{1}{2} & 0 & 0 & 0\\
        0 & 0 & w_2 & -w_1 & 1 & h & 0\\
        0 & 0 & w_1 & w_2 & -h & 1 & 0\\
        0 & 0 & 0 & 0 & 0 & 0 & 2\\
    \end{smallpmatrix}.$$
\end{proposition}

\begin{proof}
    Recalling the form of the metric and complex structure, we see that $\omega=-e^{13}-e^{24}+\varepsilon e^{56}$; setting $e_7=b$ and $E=\check{D}+2b^*\otimes b$, we obtain that any pseudo-Kähler-Einstein extension of a pseudo-Kähler almost abelian Lie algebra with $\frh_0$ non-isotropic has the form \eqref{eqn:explicitKEext_noniso}, where $D$ is as in \eqref{eq:6d_nilpotent_non-isotropic}, and $\check{D}$ satisfies $\check{D}=\Id+\check{D}^a$, with $\check{D}^a$ skew-Hermitian. Applying Proposition \ref{prop:ESnil_non-isotropic} we obtain that $$\check{D}=\begin{smallpmatrix}
        \frac{3}{2} & x & y & 0 & w_1 & -w_2\\
        -x & \frac{3}{2} & 0 & y & w_2 & w_1\\
        0 & 0 & \frac{1}{2} & x & 0 & 0\\
        0 & 0 & -x & \frac{1}{2} & 0 & 0\\
        0 & 0 & w_2 & -w_1 & 1 & h\\
        0 & 0 & w_1 & w_2 & -h & 1
    \end{smallpmatrix},$$ where $h,x,y,w_1,w_2\in\R$. The parameter $y$ can be set to zero; indeed, in the notation of Proposition \ref{prop:ESnil_non-isotropic}, the skew-Hermitian matrix $K_1-\Id$ admits a unitary basis of eigenvectors, and this change of frame does not affect $A$, whose image and kernel coincide with the eigenspace of $\frac32+ix$.
\end{proof}

We consider now the isotropic case.

\begin{proposition}\label{prop:ESnil}
    Let $(\frg,J,g)$ be a nilpotent almost abelian pseudo-Kähler Lie algebra with $\frh_0$ isotropic, and let $\check D$ be a derivation of $\frg$ satisfying the conditions of Proposition \ref{prop:constructse}. Then \begin{equation}\label{eq:D_and_check_D}
        D=\begin{pmatrix}
        0&0&-(J_Vv)^{\flat_V}&c_1\\
        0&0&v^{\flat_V}&c_2\\
        0&0&A&v\\
        0&0&0&0
    \end{pmatrix},\quad\check{D}=\begin{pmatrix}
        1+d_{11} & d_{12} & -(J_Vz)^{\flat_V} & d_{1,2n-1} & 0\\
        -d_{12} &1+ d_{11} & z^{\flat_V} &0 & d_{1,2n-1}\\
        w & J_Vw & P & z & J_Vz\\
        d_{2n-1,1} & 0 & (J_Vw)^{\flat_V} & 1-d_{11} & d_{12}\\
        0 & d_{2n-1,1} &- w^{\flat_V} & -d_{12} & 1-d_{11}
    \end{pmatrix},\end{equation} where $A$ is nilpotent, $P-\Id_V$ is skew-Hermitian, $[P,A]=0$ and $$Aw=0, \quad g(v,w)=0=g(v,Jw), \quad d_{2n-1,1}c_1=0=d_{2n-1,1}c_2, \quad d_{2n-1,1}v=0, \quad c_1w=0=c_2w,$$
    $$2d_{11}c_1 + d_{12}c_2 +2g(Jv,z)=0, \quad d_{12}v=0, \quad d_{12}c_1-2d_{11}c_2=0, \quad Az-Pv+(1-d_{11})v=0.$$
\end{proposition}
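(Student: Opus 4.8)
The plan is to set up the general derivation using Lemma~\ref{lemma:derivationsalmostabelian}, combined with the characterization of $\check D$ in Proposition~\ref{prop:constructse}, and then extract the stated constraints by splitting everything along the filtration $\frh^\perp\subset\frh_0\subset\frh_0^\perp\subset\frh\subset\frg$. First I would observe that since $\frg$ is nilpotent, the derivation $D$ of \eqref{eq:D_zero_g0} must be nilpotent, forcing $a=0$, $c_1,c_2$ unconstrained and $A$ nilpotent; this gives the displayed form of $D$. Next, I would write a general $J$-commuting, $g$-skew-symmetric endomorphism $\check D^a$ of $\frg$ in block form with respect to the decomposition $\frg=\frh^\perp\oplus J\frh^\perp\oplus V\oplus\Span{e_{2n-1}}\oplus\Span{e_{2n}}$. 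The conditions $\check D^a J=J\check D^a$ and $g(\check D^a X,Y)+g(X,\check D^a Y)=0$, using the explicit $g$ of \eqref{eqn:almosthermitianstructurezero} (the hyperbolic pairing $e^1\odot e^{2n}-e^2\odot e^{2n-1}$ plus $g_V$ on $V$), pin down $\check D=\Id+\check D^a$ to have exactly the shape in \eqref{eq:D_and_check_D}: the diagonal $V$-block is $P$ with $P-\Id_V$ skew-Hermitian; the blocks coupling $\frh^\perp\oplus J\frh^\perp$ to $V$ are governed by a single vector $w\in V$ (and its mirror into $\Span{e_{2n-1},e_{2n}}$ by a vector $z\in V$); the $2\times2$ blocks on $\frh_0$ and on $\Span{e_{2n-1},e_{2n}}$ are rotations-plus-scalings with entries $d_{11},d_{12}$ and their sign-flipped counterparts (the $1-d_{11}$ appearing because skew-symmetry against the hyperbolic pairing swaps the two null planes), and the remaining free off-diagonal scalars are $d_{1,2n-1}$ and $d_{2n-1,1}$. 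This is bookkeeping but needs care with signs coming from $J e_{2n-1}=e_{2n}$ and the minus sign in $-e^2\odot e^{2n-1}$.

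With the forms of $D$ and $\check D$ in hand, the remaining content is the derivation condition, which by Lemma~\ref{lemma:derivationsalmostabelian} (applied with $\frh=\Span{e_1,e_2}\oplus V\oplus\Span{e_{2n-1}}$ and the grading element $e_{2n}$) amounts to: the restriction $K$ of $\check D$ to $\frh$ satisfies $[K,D]=kD$ for the appropriate scalar $k$, the ``$e^{2n}\otimes u$'' part is constrained, and the $\alpha$-component vanishes since $\frg$ is non-abelian (here one uses the remark after Lemma~\ref{lemma:derivationsalmostabelian}: a nonzero $\alpha$ would force $\frg\cong\mathfrak{heis}_3\oplus\R^k$, and one checks this case either does not arise for a genuine pseudo-Kähler nilpotent $\frg$ with $\frh_0$ isotropic of the relevant dimension, or is absorbed). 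I would then compute $[\check D,D]$ blockwise. The $V\times V$ block gives $[P,A]=0$. The $V$-component of the $(e_{2n-1})$-column and the action of $D$ on $w$ combine to give $Aw=0$ and $Az-Pv+(1-d_{11})v=0$. Pairing the $\frh^\perp$ and $J\frh^\perp$ rows with $v$ (using $v^{\flat_V}$, $(J_Vv)^{\flat_V}$ and $g(v,Av)=0$ from skew-Hermiticity of $A$) produces $g(v,w)=0=g(v,Jw)$. The scalar entries in the $\frh_0$ block paired against the $c_1,c_2$ and against $z$ yield the two linear relations $2d_{11}c_1+d_{12}c_2+2g(Jv,z)=0$ and $d_{12}c_1-2d_{11}c_2=0$. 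The entries in the $(e_{2n-1},e_1)$ and $(e_{2n-1},e_2)$ positions, together with $k$ being forced to a specific value (I expect $k=0$ since $D$ is nilpotent and $\check D$ has nonzero trace $\dim\frg$, so $\operatorname{tr}[\check D,D]=0$ must equal $k\operatorname{tr}D=0$ trivially — so $k$ is actually determined by looking at how $\check D$ acts on $e_{2n}$, giving $k$ from the $[e_0,b]=2b$ normalization; more precisely $k$ is read off the $e^{2n}\otimes e_{2n}$ entry), give $d_{2n-1,1}c_1=0=d_{2n-1,1}c_2$ and $d_{2n-1,1}v=0$ and $d_{12}v=0$.

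The main obstacle I anticipate is twofold. First, correctly deriving the \emph{shape} of $\check D$ in \eqref{eq:D_and_check_D} from the abstract conditions ``$\check D^a$ skew-symmetric relative to $g$ and commutes with $J$'': because $g$ is neutral on the four-dimensional span of $\{e_1,e_2,e_{2n-1},e_{2n}\}$ and degenerate when restricted to various subspaces, the skew-symmetry condition mixes the two null planes in a way that is easy to get wrong up to signs, and one must verify that precisely the parameters $d_{11},d_{12},d_{1,2n-1},d_{2n-1,1},w,z,P$ survive. I would handle this by choosing the explicit basis and writing $g$ as a concrete matrix, then solving $\tran{(\check D^a)}\,g+g\,\check D^a=0$ and $\check D^a J=J\check D^a$ as linear systems. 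Second, being careful about the non-abelian hypothesis: I must confirm that in this isotropic nilpotent setting $\frg$ is never $\mathfrak{heis}_3\oplus\R^k$ in a way that would allow a nonzero $\alpha$ in Lemma~\ref{lemma:derivationsalmostabelian}, or else note that such $\frg$ has $A=0$, $v=0$ and treat it directly; in either case $\alpha=0$, so the ``$\alpha\otimes e_{2n}$'' term drops and only $K$, $k$, $u$ remain. Once these two points are settled, the rest is a mechanical blockwise expansion of $[\check D,D]=kD$ and $\check D u = \dotsb$, collecting coefficients; each stated equation corresponds to one matrix entry or one pairing, and I would simply list them in the order the blocks appear, matching the display in the statement.
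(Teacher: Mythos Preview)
Your overall strategy coincides with the paper's: pin down the shape of $\check D$ from the skew-Hermitian condition on $\check D-\Id$, then feed it into Lemma~\ref{lemma:derivationsalmostabelian} and read off $[K,D]=kD$ blockwise. Two points in your plan need correction, however.

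First, the claim that $\alpha=0$ is wrong and unnecessary. The parameters $w$, $d_{2n-1,1}$, $d_{12}$ in the displayed $\check D$ are precisely the components of $\alpha$ (the last row of $\check D$ restricted to $\frh$). The remark after Lemma~\ref{lemma:derivationsalmostabelian} does say that $\alpha\neq0$ forces $\frg\cong\mathfrak{heis}_3\oplus\R^{2n-3}$, but that case \emph{genuinely occurs} here: it is the case $A=0$, $v=0$, realized by $D_4$ and $D_5$ in Proposition~\ref{prop:KEextension6d}, where $w$ survives as a free parameter. So ``in either case $\alpha=0$'' is false. The paper never asserts $\alpha=0$; it keeps $w,d_{2n-1,1},d_{12}$ general, obtains the constraints $Aw=0$, $g(v,w)=0=g(v,Jw)$, $d_{2n-1,1}c_i=0$, $d_{2n-1,1}v=0$ from the vanishing of the first two \emph{columns} of $[K,D]-kD$ (since $D$ annihilates $e_1,e_2$), and then simply observes that these conditions already give $\alpha\circ D=0$, consistent with Lemma~\ref{lemma:derivationsalmostabelian}. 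Your middle paragraph in fact lists exactly these constraints, so you are implicitly doing the right computation; just drop the separate claim that $\alpha$ vanishes. Relatedly, Lemma~\ref{lemma:derivationsalmostabelian} puts no condition on $u$ at all, so ``the $e^{2n}\otimes u$ part is constrained'' is also off: the form of $u$ is fixed by skew-Hermiticity alone.

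Second, your discussion of $k$ does not land. The trace argument yields $0=0$, and the relation $[e_0,b]=2b$ lives in the extension $\tilde\frg$, not in $\frg$, so it says nothing about $k$. In the paper $k$ is the $(2n,2n)$ entry of $\check D$, and the passage from $[K,D]=kD$ to $[K,D]=0$ comes from the $(V,V)$ block, which gives $[P,A]=kA$; the paper then argues $k=0$ using that $P-\Id_V$ is skew-Hermitian (compare the eigenvalue-shifting argument in Lemma~\ref{lemma:extensionsnonisotropic}). You should isolate this step explicitly: without $k=0$ the remaining scalar relations pick up extra $(1-d_{11})$-terms and will not match the displayed equations in the statement.
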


\begin{proof}
    Fix a basis $\{e_1,\ldots,e_{2n}\}$ of $\frg$ such that the almost pseudo-Hermitian structure $(J,g)$ takes the form \eqref{eqn:almosthermitianstructurezero}. Relative to the basis $\{e_1,\dotsc, e_{2n}\}$, the fact that $\check D-\Id$ is skew-Hermitian implies that $$\check{D}=\begin{pmatrix}
        1+d_{11} & d_{12} & -(J_Vz)^{\flat_V} & d_{1,2n-1} & 0\\
        -d_{12} &1+ d_{11} & z^{\flat_V} &0 & d_{1,2n-1}\\
        w & J_Vw & P & z & J_Vz\\
        d_{2n-1,1} & 0 & (J_Vw)^{\flat_V} & 1-d_{11} & d_{12}\\
        0 & d_{2n-1,1} &- w^{\flat_V} & -d_{12} & 1-d_{11}
    \end{pmatrix},$$ where $P-\Id_V$ is skew-Hermitian. Relative to the same basis, $D$ takes the form \eqref{eq:D_zero_g0}; since we are assuming $\g$ is nilpotent, we obtain $$D=\begin{pmatrix}
        0&0&-(J_Vv)^{\flat_V}&c_1\\
        0&0&v^{\flat_V}&c_2\\
        0&0&A&v\\
        0&0&0&0
    \end{pmatrix},$$ where $A$ is nilpotent. In the language of Lemma \ref{lemma:derivationsalmostabelian}, $K$ is the submatrix obtained from $\check D$ by eliminating the last column and row. By the same lemma, we have $[K,D]=kD$. Concentrating on the first two columns, we compute $$[K,D]=\begin{pmatrix}
        g(Jv,w)-c_1d_{2n-1,1}  & g(v,w) &* &*\\
        g(v,w)-c_2d_{2n-1,1} & -g(v,Jw) & * &*\\
        -Aw-d_{2n-1,1}v & -AJw & *&* \\
        0 & 0 &*&*
    \end{pmatrix}.$$

    Recalling that $A$ commutes with $J$, we obtain \begin{equation}\label{eqn:ESnil:twocolumns}
        Aw=0,\quad  g(v,w)=0=g(v,Jw), \quad d_{2n-1,1}c_1=0=d_{2n-1,1}c_2, \quad d_{2n-1,1}v=0.
    \end{equation}

    These conditions also imply that, in the language of Lemma \ref{lemma:derivationsalmostabelian}, $\alpha\circ D=0$. Making use of \eqref{eqn:ESnil:twocolumns}, we obtain $$[K,D]=\left(\begin{smallmatrix}
        0 & 0 & -(1+d_{11})(Jv)^{\flat_V} +d_{12}v^{\flat_V} -(J z)^{\flat_V} A+(J v)^{\flat_V} P -c_1 (Jw)^{\flat_V} & (1+d_{11})c_1 + d_{12}c_2 - g(Jz,v)+g(Jv,z)-c_1(1-d_{11})\\
        0 & 0 & d_{12}(Jv)^{\flat_V} +(1+d_{11})v^{\flat_V} +z^{\flat_V} A-v^{\flat_V} P-c_2 (Jw)^{\flat_V} & -d_{12}c_1 + (1+d_{11})c_2 + g(z,v)-g(z,v)-c_2(1-d_{11})\\
        0 & 0 & [P,A] & c_1w+c_2Jw+Pv-Az-(1-d_{11})v\\
        0 & 0 & 0 & 0
    \end{smallmatrix}\right).$$

    In particular, $[P,A]=kP$ is skew-Hermitian, which implies $k=0$ because $P-\Id_V$ is skew-Hermitian. Imposing $[K,D]=0$ we get \begin{align*}
        0&=-(1+d_{11})(Jv)^{\flat_V} +d_{12}v^{\flat_V} -(J z)^{\flat_V} A+(J v)^{\flat_V} P -c_1 (Jw)^{\flat_V},\\
        0&=(1+d_{11})c_1 + d_{12}c_2 - g(Jz,v)+g(Jv,z)-c_1(1-d_{11}),\\
        0&=d_{12}(Jv)^{\flat_V} +(1+d_{11})v^{\flat_V} +z^{\flat_V} A-v^{\flat_V} P-c_2 (Jw)^{\flat_V},\\
        0&=-d_{12}c_1 + 2d_{11}c_2,\\
        0&=c_1w+c_2Jw+Pv-Az-(1-d_{11})v.
    \end{align*}

    Raising indices, and recalling that $P^*=-P+2\Id$ we get \begin{align*}
        0&=-c_1 (Jw) +d_{12}v + A(J z)- P(J v)+(1-d_{11})(Jv),\\
        0&=2d_{11}c_1 + d_{12}c_2 +2g(Jv,z),\\
        0&=d_{12}(Jv)-c_2 (Jw) -(1-d_{11})v - Az+ Pv,\\
        0&=-d_{12}c_1 + 2d_{11}c_2,\\
        0&=c_1w+c_2Jw-Az+Pv-(1-d_{11})v.
    \end{align*}

    Taking $J$ of the first equation above we get $$c_1 w +d_{12}Jv - A z+Pv-(1-d_{11})v=0,$$ and subtracting the third equation we obtain $c_1w+c_2Jw=0$, which implies that $c_1w=0$ and $c_2w=0$. Thus, we are left with $$2d_{11}c_1 + d_{12}c_2 +2g(Jv,z)=0, \quad d_{12}v=0, \quad d_{12}c_1-2d_{11}c_2=0, \quad Az-Pv+(1-d_{11})v=0,$$ which completes the set of conditions from the statement.
\end{proof}

Let $(\frg,J,g)$ be a nilpotent almost abelian pseudo-Kähler Lie algebra of dimension six with $\frh_0$ isotropic. Then, by Proposition \ref{prop:class_6d_isotropic}, the nilpotent derivation $D$ takes one of the following forms: \begin{equation}\label{eq:6d_nilpotent_isotropic}
    D_2=\left(\begin{smallmatrix}
    0 & 0 & 0 & -1 & 0 \\
    0 & 0 & 1 & 0 & 0 \\
    0 & 0 & 0 & 0 & 1 \\
    0 & 0 & 0 & 0 & 0 \\
    0 & 0 & 0 & 0 & 0
\end{smallmatrix}\right),\quad D_3(x)=\left(\begin{smallmatrix}
    0 & 0 & 0 & -x & 0 \\
    0 & 0 & x & 0 & 1 \\
    0 & 0 & 0 & 0 & x \\
    0 & 0 & 0 & 0 & 0 \\
    0 & 0 & 0 & 0 & 0
\end{smallmatrix}\right),\quad D_4(c_1)=\left(\begin{smallmatrix}
    0 & 0 & 0 & 0 & c_{1} \\
    0 & 0 & 0 & 0 & 1 \\
    0 & 0 & 0 & 0 & 0 \\
    0 & 0 & 0 & 0 & 0 \\
    0 & 0 & 0 & 0 & 0
\end{smallmatrix}\right),\quad D_5=\left(\begin{smallmatrix}
    0 & 0 & 0 & 0 & 1 \\
    0 & 0 & 0 & 0 & 0 \\
    0 & 0 & 0 & 0 & 0 \\
    0 & 0 & 0 & 0 & 0 \\
    0 & 0 & 0 & 0 & 0
\end{smallmatrix}\right),\end{equation} where $x>0$ and $c_1\in\R$. Using these derivations we obtain the following result.

\begin{proposition}\label{prop:KEextension6d}
    The pseudo-Kähler-Einstein Lie algebras obtained by applying the construction of Proposition \ref{prop:constructse} to a non-abelian nilpotent almost abelian pseudo-Kähler Lie algebra of dimension six with $\frh_0$ isotropic are \begin{equation}\label{eqn:explicitKEext}
        \bigl((\R^5\rtimes_D\R)\oplus_{-2\omega}\Span{e_7}\bigr)\rtimes_{E}\Span{e_0},\quad \omega=-e^{15}-e^{26}+e^{34},
    \end{equation} where the pseudo-Kähler structure is given by \begin{gather*}
        \tilde{J}e_1=e_2,\,\tilde{J}e_3=e_4,\,\tilde{J}e_5=e_6,\,\tilde{J}e_7=e_0,\\
        \tilde{g}=e^1\odot e^6-e^2\odot e^5+e^3\otimes e^3+e^4\otimes e^4-e^7\otimes e^7-e^0\otimes e^0,
    \end{gather*} and for some real parameters $d_{15},z_1,z_2,p$ one of the following holds: \begin{itemize}
        \item $D$ equals either $D_2$ or $D_3(x)$, $x>0$, and $$E=\begin{smallpmatrix}
            1 & 0 & 0 & -z_{1} & d_{15} & 0 & 0\\
            0 & 1 & z_{1} & 0 & 0 & d_{15} & 0\\
            0 & 0 & 1 & 0 & z_{1} & 0 & 0\\
            0 & 0 & 0 & 1 & 0 & z_{1} & 0\\
            0 & 0 & 0 & 0 & 1 & 0 & 0\\
            0 & 0 & 0 & 0 & 0 & 1 & 0\\
            0 & 0 & 0 & 0 & 0 & 0 & 2\\
        \end{smallpmatrix};$$
        \item $D$ equals either $D_4(c_1)$, $c_1\in\R$, or $D_5$ and $$E=\begin{smallpmatrix}
            1 & 0 & z_{2} & -z_{1} & d_{15} & 0 & 0\\
            0 & 1 & z_{1} & z_{2} & 0 & d_{15} & 0\\
            0 & 0 & 1 & p & z_{1} & -z_{2} & 0\\
            0 & 0 & -p & 1 & z_{2} & z_{1} & 0\\
            0 & 0 & 0 & 0 & 1 & 0 & 0\\
            0 & 0 & 0 & 0 & 0 & 1 & 0\\
            0 & 0 & 0 & 0 & 0 & 0 & 2\\
        \end{smallpmatrix}.$$
    \end{itemize}
\end{proposition}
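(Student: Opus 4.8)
The plan is to feed the classification of six-dimensional nilpotent isotropic derivations into Proposition~\ref{prop:ESnil} and then apply the extension construction of Proposition~\ref{prop:constructse}. First I would reduce to the isotropic case: if $\frh_0$ is non-isotropic, Lemma~\ref{lemma:extensionsnonisotropic} shows that no derivation $\check D$ as in Proposition~\ref{prop:constructse} exists unless $\frg$ is abelian, and we have excluded the abelian case. So $\frg$ is one of the Lie algebras of Proposition~\ref{prop:class_6d_isotropic}, and being nilpotent and non-abelian its derivation $D$ is, up to a unitary change of frame, one of $D_2$, $D_3(x)$, $D_4(c_1)$, $D_5$ of \eqref{eq:6d_nilpotent_isotropic}. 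In the notation of \eqref{eq:D_zero_g0} this amounts to $a=0$, $A=0$, $V=\Span{e_3,e_4}$ with $J_Ve_3=e_4$, and $(v,c_1,c_2)$ equal respectively to $(e_3,0,0)$, $(xe_3,0,1)$ with $x>0$, $(0,c_1,1)$, or $(0,1,0)$.

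Next, for each such $D$ I would substitute into Proposition~\ref{prop:ESnil}---which, together with Lemma~\ref{lemma:derivationsalmostabelian}, characterizes the derivations $\check D$ meeting the hypotheses of Proposition~\ref{prop:constructse}---and solve the resulting constraints; the computation is short because $V$ is two-dimensional with $g_V$ positive definite, so $P-\Id_V=pJ_V$ for a single $p\in\R$ and $\Im A=0$. For $D_2$ and $D_3(x)$, the vector $v$ spans a $g_V$-nondegenerate line, so $g(v,w)=g(v,Jw)=0$ forces $w=0$; $d_{2n-1,1}v=0$ (resp.\ $d_{2n-1,1}c_2=0$) gives $d_{2n-1,1}=0$; $d_{12}v=0$ gives $d_{12}=0$, and then $g(Jv,z)=0$ forces $z\in\Span{v}=\Span{e_3}$, say $z=z_1e_3$; finally $Az-Pv+(1-d_{11})v=0$ reduces to $Pv=(1-d_{11})v$, whence $p=0$ and $d_{11}=0$. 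Thus $\check D$ is parametrized by $z_1$ and $d_{1,2n-1}=:d_{15}$ alone, and writing it in the basis $\{e_1,\dots,e_6\}$ and adjoining the relation $[e_0,e_7]=2e_7$ yields the first matrix $E$ in the statement. For $D_4(c_1)$ and $D_5$, where $v=0$, all conditions involving $v$ become vacuous; $d_{2n-1,1}=0$ as before, the linear system $2d_{11}c_1+d_{12}c_2=0$, $d_{12}c_1-2d_{11}c_2=0$ has determinant $2(c_1^2+c_2^2)>0$ and hence forces $d_{11}=d_{12}=0$, while $w,z\in V$ and $p\in\R$ remain free; so $\check D$ is parametrized by $w_1,w_2,z_1,z_2,p,d_{15}$ and produces the second matrix $E$.

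Finally I would assemble the extension via Proposition~\ref{prop:constructse}: with $\tilde\frg=(\frg\oplus_{-2\omega}\Span{e_7})\rtimes_E\Span{e_0}$, where $\ad(e_0)=E$ is the block matrix $\check D\oplus(2)$ and $\omega=g(J\cdot,\cdot)$ is the fundamental form of $(\frg,J,g)$ computed from \eqref{eqn:Jgisotropic6}, the pseudo-Kähler structure $(\tilde J,\tilde g)$ and the Einstein identity $\widetilde{\Ric}=(\dim\tilde\frg+2)\tilde g=10\,\tilde g$ are read off directly. Conversely, every Lie algebra of the displayed form does arise in this way, since each $\check D$ in the two families is a derivation of the form $\Id+\check D^a$ with $\check D^a$ skew-symmetric relative to $g$ and commuting with $J$, hence meets the hypotheses of Proposition~\ref{prop:constructse}.

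The one real obstacle is the bookkeeping: translating the coordinate-free conditions of Proposition~\ref{prop:ESnil} into the explicit two-dimensional-$V$ setting for each of the four derivations, and matching the resulting one- and six-parameter families of $\check D$ with the explicit matrices $E$ in the statement, keeping careful track of the sign conventions hidden in $J_V$, the musical isomorphism $\flat_V$, and the symmetrized product $\odot$. Beyond that the argument is entirely formal given Propositions~\ref{prop:constructse} and \ref{prop:ESnil}.
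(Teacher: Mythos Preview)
Your proposal is correct and follows essentially the same route as the paper: reduce to the isotropic case via Lemma~\ref{lemma:extensionsnonisotropic}, invoke the six-dimensional nilpotent classification of Proposition~\ref{prop:class_6d_isotropic}, and then solve the constraints of Proposition~\ref{prop:ESnil} for each of $D_2,D_3(x),D_4(c_1),D_5$ before assembling the extension via Proposition~\ref{prop:constructse}. In fact you spell out the case-by-case reduction (e.g.\ deducing $w=0$, $d_{12}=0$, $p=0$, $d_{11}=0$ from $v\neq0$ for $D_2,D_3(x)$, and using the nonsingularity of the $2\times2$ system in $(d_{11},d_{12})$ for $D_4,D_5$) more explicitly than the paper, which merely asserts that specializing the general $\check D$ to each $D_i$ yields the displayed matrices.
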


\begin{proof}
    Recalling the form of the metric and complex structure, we see that $\omega=-e^{15}-e^{26}+e^{34}$; setting $e_7=b$ and $E=\check{D}+2b^*\otimes b$, we obtain that any pseudo-Kähler-Einstein extension of a pseudo-Kähler almost abelian Lie algebra with $\frh_0$ isotropic has the form \eqref{eqn:explicitKEext}, where $D$ is one of \eqref{eq:6d_nilpotent_isotropic}, and $\check{D}$ satisfies $\check{D}=\Id+\check{D}^a$, with $\check{D}^a$ skew-Hermitian. Applying Proposition \ref{prop:ESnil}, we obtain the following general form: $$\check{D}=\begin{smallpmatrix}
        1+d_{11}&d_{12}&z_2&-z_1&d_{15}&0\\
        -d_{12}&1+d_{11}&z_1&z_2&0&d_{15}\\
        w_1&-w_2&1&p&z_1&-z_2\\
        w_2&w_1&-p&1&z_2&z_1\\
        d_{51}&0&-w_2&w_1&1-d_{11}&d_{12}\\
        0&d_{51}&-w_1&-w_2&-d_{12}&1-d_{11}
    \end{smallpmatrix},$$ where $d_{11},d_{12},d_{15},d_{51},z_1,z_2,w_1,w_2,p\in\R$, and its specific form depends on the derivation $D$. Using $D_2$ or $D_3(x)$ we obtain the first matrix from the statement, and using $D_4(c_1)$ or $D_5$ we obtain the second matrix from the statement. The same argument also shows that every Lie algebra of this form is pseudo-Kähler-Einstein.
\end{proof}


\bibliographystyle{myamsplain}
\bibliography{biblio}

\small\textsc{Dipartimento di Matematica, Università di Pisa, Largo Bruno Pontecorvo, 5, 56127 Pisa PI, Italy}\\
\textit{Email address:} \texttt{diego.conti@unipi.it}

\medskip

\small\textsc{Scuola Internazionale Superiore di Studi Avanzati (SISSA), via Bonomea, 265, 34136 Trieste TS, Italy}\\
\textit{Email address:} \texttt{agilgarc@sissa.it}

\end{document}